\newif\ifpersonal
\newtheorem{theorem}[subsection]{Theorem}
\newtheorem{proposition}[subsection]{Proposition}
\newtheorem{corollary}[subsection]{Corollary}
\newtheorem{lemma}[subsection]{Lemma}
\theoremstyle{definition}
\newtheorem{remark}[subsection]{Remark}
\numberwithin{equation}{subsection}
\patchcmd{\section}{\scshape}{\bfseries}{}{}
\renewcommand{\@secnumfont}{\bfseries}
\newcommand{\spec}{\mathrm{Spec}}
\newcommand{\sO}{\mathscr{O}}
\newcommand{\sF}{\mathscr{F}}
\newcommand{\ra}{\rightarrow}
\newcommand{\bA}{\mathbb A}
\newcommand{\rC}{\mathrm{C}}
\newcommand{\rLC}{\mathrm{LC}}
\newcommand{\rlc}{\mathrm{lc}}
\newcommand{\rc}{\mathrm{c}}
\newcommand{\dt}{\mathrm{dt}}
\newcommand{\sw}{\mathrm{sw}}
\newcommand{\rGC}{\mathrm{GC}}
\newcommand{\rGLC}{\mathrm{GLC}}
\newcommand{\Spec}{\mathrm{Spec}}
\newcommand{\bT}{\mathbb T}
\newcommand{\sK}{\mathscr K}
\newcommand{\ol}{\overline}
\newcommand{\rk}{\mathrm{rk}}
\begin{document}

\title{semi-continuity for conductor divisors of \'etale sheaves}
%\linenumbers
%\begin{document}\Large

%%%%%%%%%%%%% first author
\author{Haoyu Hu}
\address{School of Mathematics, Nanjing University, Hankou Road 22, Nanjing, China}
\email{huhaoyu@nju.edu.cn, huhaoyu1987@gmail.com}

%%%%%%%%%%%%%%%% second author
\author{Jean-Baptiste Teyssier}
%    Address of record for the research reported here
\address{Institut de Math\'ematiques de Jussieu, 4 place Jussieu, Paris, France}
%    Current address
%\curraddr{Department of Mathematics and Statistics,
%Case Western Reserve University, Cleveland, Ohio 43403}
\email{jean-baptiste.teyssier@imj-prg.fr}
%    Address of record for the research reported here
%    Current address
%\curraddr{Department of Mathematics and Statistics,
%Case Western Reserve University, Cleveland, Ohio 43403}

%    \thanks will become a 1st page footnote.
%\thanks{}

%    Information for second author
%\thanks{}

%    General info
\subjclass[2000]{Primary 14F20; Secondary 11S15}

%\date{August 15, 2011 and, in revised form, June 22, 2001.}

%\dedicatory{This paper is dedicated to our advisors.}

\keywords{Semi-continuity, Abbes-Saito's ramification theory, conductor divisors, logarithmic conductor divisors}

\begin{abstract}
In this article, we prove a semi-continuity property for both conductor divisors and logarithmic conductor divisors for \'etale sheaves on higher relative dimensions in a geometric situation. It generalizes a semi-continuity result for conductors of \'etale sheaves on relative curves to higher relative dimensions, and it can be considered as a higher dimensional $\ell$-adic analogy of Andr\'e's result on the semi-continuity of Poincar\'e-Katz ranks of meromorphic connections on smooth relative curves.

\end{abstract}
\maketitle

\tableofcontents

\section{Introduction}
\subsection{}\label{delignelaumon}
Let $S$ be an excellent noetherian scheme, $f:X\rightarrow S$ a separated and smooth morphism of relative dimension $1$, $D$ a closed subset of $X$ which is finite and flat over $S$, $U$ the complement of $D$ in $X$ and $j:U\rightarrow X$ the canonical injection. Let $\ell$ be a prime number invertible in $S$ and $\Lambda$ a finite field of characteristic $\ell$. Let $\mathscr F$ be a locally constant and constructible sheaf of $\Lambda$-modules on $U$ of constant rank. For any point $ s\in S$, we denote by $\bar s\rightarrow S$ an algebraic geometric point above $s$ and by $X_{\bar s}$ and $D_{\bar s}$ the fibers of $f:X\rightarrow S$ and $f|_D:D\rightarrow S$ at $\bar s$, respectively. For each point $x\in D_{\bar s}$, we define
\begin{equation}\label{dimtotcurve}
\mathrm{dt}_x(j_!\mathscr F|_{X_{\bar s}})=\mathrm{sw}_x(j_!\mathscr F|_{X_{\bar s}})+\mathrm{rank}(\mathscr F),
\end{equation}
where $\mathrm{sw}_x(j_!\mathscr F|_{X_{\bar s}})$ denotes the classical Swan conductor of the sheaf $j_!\mathscr F|_{X_{\bar s}}$ at $x$ which is an integer number \cite[19.3]{linrep}. The sum
\begin{equation}\label{introsumdimtot}
\sum_{x\in D_{\bar s}}\mathrm{dt}_x(j_!\mathscr F|_{X_{\bar s}})
\end{equation}
does not depend on the choice of $\bar s$ above $s$. It defines a function $\varphi:S\rightarrow \mathbb Z$. The following property of $\varphi$ is due to Deligne and Laumon:

\begin{theorem}[{\cite[2.1.1]{lau}}]\label{themdelignelaumon}
We take the notation and assumptions of \ref{delignelaumon}. Then,
\begin{itemize}\itemsep=0.2cm
\item[(1)]
The function $\varphi:S\rightarrow \mathbb Z$ is constructible and lower semi-continuous on $S$.
\item[(2)]
If $\varphi:S\rightarrow \mathbb Z$ is locally constant, then $f:X\rightarrow S$ is universally locally acyclic with respect to $j_!\mathscr F$.
\end{itemize}
\end{theorem}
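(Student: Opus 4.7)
The plan is to apply the Grothendieck--Ogg--Shafarevich formula to rewrite $\varphi$ in terms of Euler characteristics of geometric fibers, and then to exploit Deligne's theory of nearby and vanishing cycles. First I would compactify: using Nagata's theorem and resolution in relative dimension one, embed $X$ into a smooth proper relative curve $\bar f\colon \bar X\to S$ so that $\bar X\setminus X$ is finite flat over $S$. Extending $j_!\mathscr F$ by zero past this new boundary leaves the sum (\ref{introsumdimtot}) unchanged, so one may assume $f$ itself is smooth and proper of relative dimension one.

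On each geometric fiber $X_{\bar s}$ of genus $g_s$, the Grothendieck--Ogg--Shafarevich formula gives
\[
\chi\bigl(X_{\bar s}, j_!\mathscr F|_{X_{\bar s}}\bigr)\;=\;\mathrm{rank}(\mathscr F)\,(2-2g_s-|D_{\bar s}|)\;-\;\sum_{x\in D_{\bar s}}\mathrm{sw}_x\bigl(j_!\mathscr F|_{X_{\bar s}}\bigr),
\]
which together with (\ref{dimtotcurve}) yields $\varphi(s)=\mathrm{rank}(\mathscr F)(2-2g_s)-\chi(X_{\bar s},j_!\mathscr F|_{X_{\bar s}})$. The integer $g_s$ is locally constant on $S$ because $f$ is smooth and proper, and the Euler characteristic is a constructible function of $s$ by Deligne's finiteness theorem for $Rf_*j_!\mathscr F$ combined with proper base change. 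Hence $\varphi$ is constructible.

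For the lower semi-continuity I would reduce, by constructibility, to the case of a strictly henselian trait $S$ with generic point $\bar\eta$ and closed point $\bar s$. Writing $\mathscr G=j_!\mathscr F$ and using the distinguished triangle $\mathscr G|_{\bar s}\to R\Psi\mathscr G\to R\Phi\mathscr G\xrightarrow{+1}$ on $X_{\bar s}$, together with proper base change for $R\Psi$, one gets
\[
\chi(X_{\bar\eta},\mathscr G|_{\bar\eta})-\chi(X_{\bar s},\mathscr G|_{\bar s})\;=\;\chi(X_{\bar s},R\Phi\mathscr G).
\]
The essential input is Deligne's theorem on vanishing cycles for a smooth morphism of relative dimension one: at every point $x\in X_{\bar s}$ the local Euler--Poincar\'e characteristic $\chi((R\Phi\mathscr G)_x)$ is non-positive, and vanishes if and only if $(R\Phi\mathscr G)_x=0$. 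Since $g_{\bar\eta}=g_{\bar s}$ along the trait, substituting into the formula for $\varphi$ yields $\varphi(\bar\eta)-\varphi(\bar s)=-\chi(X_{\bar s},R\Phi\mathscr G)\geq 0$, which is the desired lower semi-continuity.

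For part (2), local constancy of $\varphi$ forces $\chi(X_{\bar s},R\Phi\mathscr G)=0$ over every trait mapping to $S$. As a sum of non-positive local contributions, this forces $\chi((R\Phi\mathscr G)_x)=0$ at every $x$, hence $(R\Phi\mathscr G)_x=0$ by the vanishing criterion, so $R\Phi\mathscr G=0$. Because both the hypothesis and the entire argument are stable under arbitrary base change $S'\to S$, this gives universal local acyclicity of $f$ with respect to $j_!\mathscr F$. The main obstacle is precisely this theorem of Deligne, which provides the sign and vanishing criterion for the local Euler characteristic of $R\Phi$ in the curve case; it is the technical core on which both (1) and (2) rest, whereas the cohomological bookkeeping around it is comparatively formal.
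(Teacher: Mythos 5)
The paper does not prove this theorem; it quotes it from Laumon's exposition of Deligne's argument (the cited [lau, 2.1.1]), so there is no in-paper proof to compare against. Your sketch follows the same overall route as that source: translate the total dimension into an Euler characteristic via Grothendieck--Ogg--Shafarevich, reduce lower semi-continuity to a strictly henselian trait, and invoke the theorem (due to Deligne, written up by Laumon) that for a smooth relative curve the vanishing cycles $R\Phi\, j_!\mathscr F$ are concentrated in degree $1$, so $\chi\bigl((R\Phi)_x\bigr)=-\dim (R^1\Phi)_x\leq 0$ with equality if and only if the stalk vanishes. Your sign analysis is correct: with $g$ constant along the trait and $\varphi(s)=\mathrm{rank}(\mathscr F)(2-2g)-\chi(X_{\bar s})$, the triangle for $R\Psi,R\Phi$ together with proper base change gives $\chi(X_{\bar\eta})-\chi(X_{\bar s})=\chi(X_{\bar s},R\Phi)\leq 0$, hence $\varphi(\bar\eta)\geq\varphi(\bar s)$, which is exactly what lower semi-continuity means on a trait; the base-change stability argument for part (2) is also correct in spirit.

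The one genuine gap is the opening compactification. Extending $j_!\mathscr F$ by zero across a new finite flat boundary $E=\bar X\setminus X$ does leave each term $\mathrm{dt}_x$ with $x\in D_{\bar s}$ unchanged, but GOS on the compactified fibers will then produce $\bar\varphi(s)=\varphi(s)+\sum_{x'\in E_{\bar s}}\mathrm{dt}_{x'}$, and these extra contributions need not be controlled: $\mathscr F$ is only given on $U$, and there is no reason it extends as a lisse (or even tamely ramified) sheaf across $\bar X\setminus X$, so the Swan conductors along $E$ are a priori nontrivial and fiber-dependent. Thus ``leaves the sum unchanged, so one may assume $f$ proper'' is not a valid reduction as stated, and even if $\bar\varphi$ were constructible and lower semi-continuous, that would not transfer to $\varphi=\bar\varphi-\varphi_E$ (the class of such functions is not closed under subtraction). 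The fix, and what Laumon does, is to reduce to a strictly henselian trait \emph{before} compactifying --- constructibility is handled by a separate localization and generic local acyclicity argument, and over a trait the existence of a good compactification of a smooth relative curve is classical --- or else to work Zariski- and \'etale-locally near $D$ so that the auxiliary boundary contribution can be isolated. With that correction, the rest of your argument is sound and matches the mechanism of the cited proof.
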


\subsection{}
Let $K$ be a complete discrete valuation field, $\mathscr O_K$ its integer ring and $F$ the residue field of $\mathscr O_K$. We assume that the characteristic of $F$ is $p>0$. Let $\overline K$ be a separable closure of $K$ and we denote by $G_K$ the Galois group of $\overline K/K$. Abbes and Saito defined two decreasing filtrations $G^r_K$ $(r\in \mathbb Q_{\geq 1})$ and $G^s_{K,\log}$ $(s\in\mathbb Q_{\geq 0})$ of $G_K$, called the ramification filtration and the logarithmic ramification filtration, respectively (\cite{as1,as2}). For any $r\in\mathbb Q_{\geq 0}$, we have $G^{r+1}_{K,\log}\subseteq G^{r+1}_K\subseteq G^{r}_{K,\log}$. If the residue field $F$ is perfect, then $G_K^{s+1}=G^s_{K,\log}$, for any $s\in\mathbb Q_{\geq 0}$, and the logarithmic ramification filtration coincides with the classical upper numbering filtration \cite[3.7]{as1}.

\subsection{}\label{introlc+1>c>lc}
Let $\ell$ be a prime number different from $p$ and $\Lambda$ a finite field of characteristic $\ell$. Let $M$ be a finitely generated $\Lambda$-module on which the wild inertia subgroup of $G_K$ acts through a finite quotient. We have two decompositions of $M$ relative to the two filtrations above
$$M=\bigoplus_{r\in \mathbb Q_{\geq 1}}M^{(r)}\ \ \ \text{and}\ \ \ M=\bigoplus_{s\in \mathbb Q_{\geq 0}}M_{\log}^{(s)},$$ 
which are called {\it the slope decomposition} and {\it the logarithmic slope decomposition} of $M$. We have the following two invariants
\begin{equation}\label{introdtmod}
\mathrm{dt}_KM=\sum_{r\geq 1}r\cdot\dim_{\Lambda}M^{(r)}\ \ \ \text{and}\ \ \ \mathrm{sw}_KM=\sum_{s\geq 0}r\cdot\dim_{\Lambda}M_{\log}^{(s)}, 
\end{equation}
called the {\it total dimension} and the {\it Swan conductor} of $M$, respectively. They generalize the classical Swan conductor and the classical total dimension. In this article, we focus on the following two invariants
\begin{align}
\rc_K(M)=\max\{r\in\mathbb Q\,|\,M^{(r)}\neq 0\}\ \ \ \text{and}\ \ \ \rlc_K(M)=\max\{s\in\mathbb Q\,|\,M^{(s)}_{\log}\neq 0\},
\end{align}
which are called the {\it conductor} and the {\it logarithmic conductor} of $M$. By Abbes and Saito's ramification theory, we have 
\begin{align*}
\sw_K(M)&\leq\dt_K(M)\leq \sw_K(M)+\dim_{\Lambda}M,\\
\rlc_K(M)&\leq\rc_K(M)\leq \rlc_K(M)+1.
\end{align*}

If the residue field $F$ is perfect, we have $$\dt_K(M)=\sw_K(M)+\dim_{\Lambda}M,\ \ \ \textrm{and}\ \ \ \rc_K(M)=\rlc_K(M)+1,$$ and the $\sw_K(M)$ (resp. $\rlc_K(M)$) is the classical Swan conductor (resp. the largest upper numbering slope) of $M$.

\subsection{}\label{YEW}
Let $\kappa$ be a field of characteristic $p>0$. Let $Y$ be a smooth $\kappa$-scheme, $E$ a reduced effective Cartier divisor on $Y$, $\{E_i\}_{i\in I}$ the set of irreducible components of $E$, $W$ the complement of $E$ in $Y$ and $h:W\rightarrow Y$ the canonical injection. We assume that each $E_i$ is generically smooth over $\mathrm{Spec}(\kappa)$. We choose an algebraic closure $\bar\kappa$ of $\kappa$. We denote by $\xi_{i}$ the generic point of an irreducible component of $E_{i,\bar\kappa}=E_i\times_{\kappa}\bar\kappa$, by $\bar\xi_{i}$ a geometric point above $\xi_{i}$, by $\eta_{i}$ the generic point of the strict localization $Y_{\bar\kappa,(\bar\xi_{i})}$, by $K_{i}$ the function field of  $Y_{\bar\kappa,(\bar\xi_{i})}$ and by $\overline K_i$ a separable closure of $K_i$.
Let $\Lambda$ be a finite field of characteristic $\ell$ $(\ell\neq p)$ and $\mathscr G$ a locally constant and constructible sheaf of $\Lambda$-modules on $W$.
The restriction $\mathscr G|_{\eta_i}$ corresponds to a finitely generated $\Lambda$-module with a continuous $\mathrm{Gal}(\overline K_i/K_i)$-action.
Since the $\mathrm{Gal}(\bar\kappa/\kappa)$-action on the set of irreducible components of $E_{i,\bar\kappa}$ is transitive, four ramification invariants  $\mathrm{dt}_{K_{i}}(\mathscr G|_{\eta_{i}})$, $\mathrm{sw}_{K_{i}}(\mathscr G|_{\eta_{i}})$, $\mathrm{c}_{K_{i}}(\mathscr G|_{\eta_{i}})$ and $\mathrm{lc}_{K_{i}}(\mathscr G|_{\eta_{i}})$ do not depend on the choice of $\bar\kappa$ nor on the choice of the irreducible component of $E_{i,\bar\kappa}$. We usually denote by $\mathrm{dt}_{E_{i}}(h_!\mathscr G)$ (resp. $\mathrm{sw}_{E_{i}}(h_!\mathscr G)$, $\mathrm{c}_{E_{i}}(h_!\mathscr G)$ and $\mathrm{lc}_{E_{i}}(h_!\mathscr G)$) instead of $\mathrm{dt}_{K_{i}}(\mathscr G|_{\eta_{i}})$ (resp. $\mathrm{sw}_{K_{i}}(\mathscr G|_{\eta_{i}})$, $\mathrm{c}_{K_{i}}(\mathscr G|_{\eta_{i}})$ and $\mathrm{lc}_{K_{i}}(\mathscr G|_{\eta_{i}})$)

We define the {\it total dimension divisor} of $h_!\mathscr G$ on  $Y$ by the Cartier divisor (\cite[Definition 3.5]{wr})
\begin{equation}\label{introdefdimtot}
\mathrm{DT}_{Y}(h_!\mathscr G)=\sum_{i\in I}\mathrm{dt}_{K_i}(\mathscr G|_{\eta_i})\cdot E_i.
\end{equation}
We define the {\it Swan divisor} of $h_!\mathscr G$ on $Y$ by the Cartier divisor
\begin{equation}
\mathrm{SW}_{Y}(h_!\mathscr G)=\sum_{i\in I}\mathrm{sw}_{K_i}(\mathscr G|_{\eta_i})\cdot E_i.
\end{equation}
We define the {\it conductor divisor} of $h_!\mathscr G$ on $Y$ by the Cartier divisor with rational coefficients
\begin{equation}
\mathrm{C}_{Y}(h_!\mathscr G)=\sum_{i\in I}\mathrm{c}_{K_i}(\mathscr G|_{\eta_i})\cdot E_i.
\end{equation}
We define the {\it logarithmic conductor divisor} of $h_!\mathscr G$ on  $Y$ by the Cartier divisor with rational coefficients
\begin{equation}
\mathrm{LC}_{Y}(h_!\mathscr G)=\sum_{i\in I}\mathrm{lc}_{K_i}(\mathscr G|_{\eta_i})\cdot E_i.
\end{equation}

\subsection{}
Assume that $f:X\rightarrow S$ has relative dimension $\geq1$, and $D$ is a Cartier divisor on $X$ relative to $S$. For each algebraic geometric point $\bar s$ of $S$, the ramification of $(j_!\mathscr F)|_{X_{\bar s}}$ at the generic points of  $D_{\bar s}$ defines four divisors supported on $D_{\bar s}$ (cf. subsection \eqref{YEW}). The lower semi-continuity for both total dimensions divisors and Swan divisors has been proved in a geometric situation (\cite[Theorem 4.3]{HY17}, \cite[Theorem 1.11]{H19}). They generalized Deligne and Laumon's result (Theorem \ref{themdelignelaumon} (1)) to higher relative dimensions in a geometric situation. 

The $D$-modules theory shares similarities with the $\ell$-adic cohomology theory. Motivated by Deligne and Laumon's result, Malgrange conjectured that irregularities of meromorphic connections on complex relative curves is lower semi-continuous. Andr\'e proved this conjecture (\cite[Corollaire 7.1.2]{An}), as well as the  lower semi-continuity for Poincar\'e-Katz ranks of meromorphic connections in the same geometric setting (\cite[Corollaire 6.1.3]{An}). In \cite[Theorem 7.2]{H23}, a semi-continuity result was proved for conductors of \'etale sheaves on relative curves of positive characteristic, which can be considered as an $\ell$-adic analogue of Andr\'e's semi-continuity result for Poincar\'e-Katz ranks. This article is devoted to proving the semi-continuity of conductor divisors and logarithmic conductor divisors for \'etale sheaves in a geometric situation, that generalizes {\it loc. cit.} to higher relative dimensions.

\subsection{}\label{nothigherdim}
Let $k$ be a perfect field of characteristic $p>0$, $S$ an irreducible $k$-scheme of finite type, $f:X\rightarrow S$ a separated and smooth $k$-morphism of finite type, $D$ an effective Cartier divisor of $X$ relative to $S$ (\cite[IV, 21.15.2]{EGA4}), $U$ the complement of $D$ in $X$ and $j:U\rightarrow X$ the canonical injection. We assume that $D$ is the sum of effective Cartier divisors $\{D_i\}_{i\in I}$ of $X$ relative to $S$, where each $D_i$ is irreducible and each restriction morphism $f|_{D_i}:D_i\to S$ is smooth. For each $s\in S$, we denote by $\rho_{s}:X_{s}\rightarrow X$ the base change of $s\rightarrow S$ by $f:X\rightarrow S$. We denote by $\eta$ the generic point of $S$. 

Let $\mathscr F$ be a locally constant and constructible sheaf of $\Lambda$-modules on $U$. We define the {\it generic conductor divisor} of $j_!\mathscr F$ on $X$ and denote by $\rGC_f(j_!\mathscr F)$ the unique Cartier divisor with rational coefficients of $X$ supported on $D$ such that $\rho_{\eta}^*(\rGC_f(j_!\mathscr F))=\rC_{X_{\eta}}(j_!\mathscr F|_{X_{\eta}})$. We define the {\it generic logarithmic conductor divisor} of $j_!\mathscr F$ on $X$ and denote by $\rGLC_f(j_!\mathscr F)$ the unique Cartier divisor with rational coefficients of $X$ supported on $D$ such that $\rho_{\eta}^*(\rGLC_f(j_!\mathscr F))=\rLC_{X_{\eta}}(j_!\mathscr F|_{X_{\eta}})$. The main result of the article is the following:

\begin{theorem}\label{intromaintheorem}
We take the notation and assumptions of \ref{nothigherdim}. Then,  
\begin{itemize}\itemsep=0.2cm
\item[(i)]
there exists an open dense subset $V$ of $S$ such that, for any point $s\in V$, we have
\begin{equation*}
\rho_{ s}^*(\mathrm{GC}_{f}(j_!\mathscr F))=\mathrm{C}_{X_{s}}((j_!\mathscr F)|_{X_{s}})
\end{equation*}
and, for any point $t\in S-V$, we have 
\begin{equation*}
\rho_{t}^*(\mathrm{GC}_{f}(j_!\mathscr F))\geq\mathrm{C}_{X_{t}}((j_!\mathscr F)|_{X_{t}}).
\end{equation*}
\item[(ii)]
there exists an open dense subset $W$ of $S$ such that for any point $s\in W$, we have
\begin{equation*}
\rho_{s}^*(\mathrm{GLC}_{f}(j_!\mathscr F)+D)=\mathrm{LC}_{X_{s}}(j_!\mathscr F|_{X_{s}})+(D_{s})_{\mathrm{red}}
\end{equation*}
and, for any point $t\in S-W$, we have 
\begin{equation*}
\rho_{t}^*(\mathrm{GLC}_{f}(j_!\mathscr F)+D)\geq\mathrm{LC}_{X_{s}}((j_!\mathscr F)|_{X_{t}})+(D_{t})_{\mathrm{red}}.
\end{equation*}
\end{itemize}
\end{theorem}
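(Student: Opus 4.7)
The plan is to combine the lower semi-continuity of the total dimension divisor \cite[Theorem 4.3]{HY17} and of the Swan divisor \cite[Theorem 1.11]{H19} with the semi-continuity of conductors on relative curves \cite[Theorem 7.2]{H23}, reducing the higher-dimensional statement to the relative-curve case by a slicing argument. Both sides of (i) and (ii) decompose as sums indexed by the irreducible components of $(D_s)_{\rm red}$: since each $f|_{D_i}$ is smooth, the pullback $\rho_s^*D_i=\sum_jE_{i,j}$ is reduced, and for each component $E$ of $(D_s)_{\rm red}$ the coefficient of $E$ on the left of (i) is $\sum_{(i,j):E_{i,j}=E}c_i$ with $c_i$ the generic conductor along $D_i$, while that on the right is $\rc_E$. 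The log version decomposes analogously, the ``$+D$'' and ``$+(D_s)_{\rm red}$'' modifications compensating exactly in the non-collision case (and providing the correct slack when several $D_i$'s collide along a component of $(D_s)_{\rm red}$), so that (i) and (ii) reduce to componentwise inequalities along each $E$.

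The open dense subset $V$ (resp. $W$) would follow from the constructibility of the (logarithmic) slope decomposition of $\mathscr F$ at the generic points of $D$ in the family over $S$. Standard generic-base-change arguments show that the slopes at the generic points of $(D_i)_s$ stabilize at the generic value on a dense open subset of $S$, which yields the desired equalities on $V$ and $W$.

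The heart of the proof is the componentwise inequality away from $V$. Given $s\in S$ and a component $E$ of $(D_i)_s$, I would pick a smooth curve $T\hookrightarrow S$ through $s$ and the generic point $\eta$ of $S$, and then construct a smooth relative curve $Y\hookrightarrow X\times_S T$ over $T$ meeting each $D_i$ transversely above $s$ at a generic point of $E$ (for those $i$ with $E\subset(D_i)_s$) and transversely above $\eta$ at a generic point of $(D_i)_\eta$. Provided that the restriction of $\mathscr F$ along such a sufficiently generic transverse slice preserves the top (log-)slope of the Abbes-Saito ramification filtration, the generic and fiberwise (log-)conductors of $(j_!\mathscr F)|_Y$ at the relevant intersection points recover $c_i$ and $\rc_E$ (resp. $\rlc_i$ and $\rlc_E$). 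Applying \cite[Theorem 7.2]{H23} to the relative curve $Y\to T$ then yields the desired inequality componentwise, with the log version handled in parallel via the logarithmic slope decomposition.

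The main obstacle is establishing the preservation of the top (log-)slope under the chosen transverse slice, i.e., a higher-dimensional restriction theorem for Abbes-Saito's ramification filtration. This is closely related to Saito's transversality condition for the (log-)characteristic cycle of $j_!\mathscr F$, and constructing such a slice globally over $T$ (rather than merely pointwise) requires a Bertini-type argument compatible with both the generic and special fibers of $X\to S$, including the collision configurations along $(D_s)_{\rm red}$. A secondary subtlety is that the residue fields at the generic points of the $D_i$ are imperfect, so the inequality $\rc_K\leq\rlc_K+1$ can be strict, forcing the log and non-log statements to be proved in parallel rather than one being formally deduced from the other.
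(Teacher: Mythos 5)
Your overall strategy (reduce to the relative curve case using \cite[Theorem 7.2]{H23} via a slicing argument, and control the slice by a Bertini-type transversality argument) is the right one and is indeed what the paper does, but as written the proposal has several genuine gaps.

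First, the geometry of the slice is wrong. You write that you would ``pick a smooth curve $T\hookrightarrow S$ through $s$ \emph{and} the generic point $\eta$ of $S$''; when $\dim S>1$ no curve in $S$ contains the generic point of $S$, so such a $T$ does not exist, and restricting the base to a curve through $s$ would lose contact with the generic fiber $X_\eta$ entirely (you would then need a second semi-continuity argument to go from $T$ to $S$). The paper does not cut $S$ at all. Instead, around a well-chosen closed point $z$ of $(D_s)_{\mathrm{red}}$ it fibers $X$ over $\mathbb A^{n-1}_S$ so that $D\to\mathbb A^{n-1}_S$ is quasi-finite and $C=g^{-1}(s\times O)$ is the transversal curve in $X_s$ (produced by Proposition \ref{transversalcurve}); then, after passing to an \'etale neighborhood $W$ of $s\times O$ via Proposition \ref{zarmain} to split $D$ into a finite flat piece plus a residual piece, it chooses a section $\sigma:V\to\mathbb A^{n-1}_S$ of $\pi$ over a dense open $V\subseteq S$ (Proposition \ref{goodsection}) whose image avoids the codimension-one collision locus $Z$, and finally base-changes to get a relative curve $Y\to T$ where $T$ is a connected component of $W\times_{\mathbb A^{n-1}_S,\sigma}V$ --- so $T$ is \'etale over a dense open of $S$ and contains both a preimage of $s$ and a point over $\eta$. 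This is what makes the application of \cite[Theorem 7.2]{H23} and the pullback inequality of Theorem \ref{Ccutbycurve}(1) work globally over $S$. Note also that the choice of section is delicate: the published curve-case argument in \cite{HY17} used the zero section and this paper explicitly corrects that by using Proposition \ref{goodsection}.

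Second, the logarithmic case is not ``handled in parallel via the logarithmic slope decomposition.'' As you yourself note, $\rlc_K\leq\rc_K\leq\rlc_K+1$ is in general strict, and there is no direct log-analogue of Theorem \ref{Ccutbycurve}(2) giving equality of $\rlc$ under a transversal slice; the curve-characterization of $\rlc_D$ in Theorem \ref{LCcutbycurve}(2) is only a supremum. The paper instead deduces (ii) from (i) via a tame Kummer cover $X'=\mathrm{Spec}(\mathscr O_X[T]/(T^\beta-g))$ ramified along $D$: under such a cover $\rc$ and $\rlc$ satisfy the pullback identities of Proposition \ref{logext}, and one recovers the logarithmic inequality in Proposition \ref{lcdprop1} by dividing by $\beta$ and letting $\beta\to\infty$, then promotes the approximate equality to an exact one in Proposition \ref{lcdprop2} by a Hasse--Arf rationality bound (Lemma \ref{Hasse-Arf-conductor}) once $\beta>(\rk_\Lambda\sF)^2$. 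This limiting trick is an essential idea your proposal is missing.

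Third, the claim that the dense open $V$ (resp. $W$) ``would follow from the constructibility of the (logarithmic) slope decomposition of $\mathscr F$ at the generic points of $D$ in the family over $S$'' is circular: generic constancy of the slopes in the family is essentially equivalent to the statement being proved. In the paper, the existence of $V$ is obtained in Proposition \ref{cdprop2} by the same fibration-and-section argument, combined with the curve-case result and the pullback inequality of Theorem \ref{Ccutbycurve}(1), not from any a priori constructibility. Finally, the passage from closed points to arbitrary points of $S$ (which your proposal does not address) requires the separate argument of \ref{finalproofCandLCtheorem}: replace $S$ by the smooth locus $T$ of $\overline{\{t\}}$ and compare $\mathrm{GC}_{f_T}$ with $\rho_T^*\mathrm{GC}_f$ at closed points of $T$.
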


\begin{remark}
The proof of Theorem \ref{intromaintheorem} (i) follows a strategy similar to that of \cite[Theorem 4.3]{HY17}. In the study of the semi-continuity for Swan divisors on higher relative dimension by the first named author \cite[Theorem 1.11]{H19}, two main ingredients are 
\begin{itemize}
\item[a)]
Deligne and Laumon's semi-continuity result for Swan conductors of \'etale sheaves on relative curves (\cite{lau});
\item[b)]
an asymptotic formula for the Swan divisor under the cutting-by-curve \cite[Theorem 6.5]{H19}. 
\end{itemize}
Besides, a Hasse-Arf theorem (\cite[Theorem 3.4.3]{xiao}) for the Swan conductor defined by Abbes and Saito's logarithmic ramification filtration is indispensable in the proof. In the proof of Theorem \ref{intromaintheorem} (ii), two main ingredients parallel to a) and b) have been developed in \cite{H23}. By contrast, there is no Hasse-Arf theorem for the logarithmic conductor divisor. The key point is to give an upper bound of denominators for logarithmic conductors relating to the sheaf (Lemma \ref{Hasse-Arf-conductor}) and the bound is indispensable in the proof. Therefore, the proof for Theorem \ref{intromaintheorem} (ii) is different from that for \cite[Theorem 1.11]{H19}. Both authors thank the anonymous referee for suggesting an axiomatization toward a semi-continuity theorem for general constructible functions on varieties. We are sorry and we have no idea to this direction.

\end{remark}

\subsection{}
Theorem \ref{intromaintheorem} and Corollary \ref{coro_for_Betti_bound} are the crucial ingredients in \cite{HT24} to get estimates for the Betti numbers of  perverse sheaves with bounded rank  and wild ramification \textit{in families} rather than on a fixed variety.

\subsection*{Acknowledgement}
Both authors would like to thank A. Abbes and Y. Cao for stimulating discussions and valuable comments. We are grateful to the anonymous referee for a careful reading and for many useful comments and suggestions. Some part of this work was done during the first named author's stay at IHES in July 2024. Both authors would like to thank the institute for the hospitality. H. H. is supported by the National Natural Science Foundation of China (Grant No. 12471012) and the Natural Science Foundation of Jiangsu Province (Grant No. BK20231539).

\section{Notation}
\subsection{}
In this article, let $k$ be a field of characteristic $p>0$. We fix a prime number $\ell$ which is different from $p$, and a finite field $\Lambda$ of characteristic $\ell$. All $k$-schemes are assumed to be separated and of finite type over $\Spec(k)$ and all morphisms between $k$-schemes are assumed to be $k$-morphisms. All sheaves of $\Lambda$-modules on $k$-schemes are assumed to be \'etale sheaves.

\subsection{}\label{bigger}
Let $X$ be a Noetherian scheme and $\mathrm{CDiv}(X)$ the $\mathbb Z$-modules of Cartier divisors on $X$. {\it A Cartier divisor with rational coefficients on $X$} denotes an element in $\mathrm{CDiv}(X)\otimes_{\mathbb Z}\mathbb Q$. We say that $E_1$ is bigger than $E_2$ and we write $E_1\geq E_2$ for two elements $E_1,E_2\in \mathrm{CDiv}(X)\otimes_{\mathbb Z}\mathbb Q$, if there exists an positive integer $r$ such that $r(E_1-E_2)$ is an effective Cartier divisor on $X$.

\subsection{}
Let $f:X\rightarrow S$ be a morphism of schemes, $s$ a point of $S$, and $\bar s\rightarrow S$ a geometric point above $s$. We denote by $X_s$ (resp. $X_{\bar s}$) the fiber $X\times_Ss$ (resp. $X\times_S\bar s$). Assume that $f:X\rightarrow S$ is flat and of finite presentation. Let $D$ be a Cartier divisor on $X$ relative to $S$ (\cite[IV, 21.15.2]{EGA4}). Let $\pi:S'\rightarrow S$ be a morphism of $k$-schemes, $X'=X\times_SS'$ and $\pi':X'\rightarrow X$ the base change of $\pi:S'\rightarrow S$. We denote by $\pi'^*D$ the pull-back of $D$, which is a Cartier divisor on $X'$ relative to $S'$  \cite[IV, 21.15.9]{EGA4}. When $S'$ is $s$ or $\bar s$, we simply denote by $D_s$ (resp. $D_{\bar s}$) the Cartier divisor $D\times_Ss$ on $X_s$ (resp. $D\times_S\bar s$ on $X_{\bar s}$). An effective Cartier divisor $E$ on $X$ relative to $S$ is identical to a closed immersion $i:E\rightarrow X$ transversally regular relative to $S$ and of codimension $1$ (\cite[IV,19.2.2 and 21.15.3.3]{EGA4}). The fiber $E_s$ (resp. $E_{\bar s}$) is an effective Cartier divisor on $X_s$ (resp. $X_{\bar s}$). 

Let $\{D_i\}_{i\in I}$ be a set of effective Cartier divisors of $X$ relative to $S$.  A linear combination $Q=\sum_{i\in I}r_iD_i$ with $r_i\in\mathbb Q$ is called a {\it Cartier divisor of $X$ relative to $S$ with rational coefficients} supported on $D=\sum_{i\in I}D_i$. We denote by $\pi'^*Q$ the linear combination $\sum_{i\in I}r_i(\pi'^*D_i)$, which is a Cartier divisor of $X'$ relative to $S'$ with rational coefficients. When $S'$ is $s$ or $\bar s$, the fiber $Q_s=\sum_{i\in I}r_iD_{i,s}$ (resp. $Q_{\overline s}=\sum_{i\in I}r_iD_{i,\overline s}$) is a Cartier divisor on $X_s$ (resp. $X_{\overline s}$) with rational coefficients.

\subsection{}
 Let $x$ be a closed point of a $k$-scheme $X$. For any irreducible closed subscheme $Z$ of $X$ containing $x$, we denote by $m_x(Z)$ the multiplicity of $Z$ at $x$ (\cite[4.3]{ful}).

\subsection{}
Let $X$ be a smooth $k$-scheme. We denote by $\bT X$ the tangent bundle of $X$ and by $\bT^*X$ the cotangent bundle of $X$. By a {\it closed conical subset} of $\bT^*X$, we mean a reduced closed subscheme of $\bT^*X$ invariant under the canonical $\mathbb G_m$-action on $\bT^*X$. For any point $x$ of $X$, we put $\bT_xX=\bT X\times_Xx$ and $\bT^*_xX=\bT^*X\times_Xx$.

%Let $Z$ be a  smooth $k$-scheme, and $i:Z\rightarrow X$ a closed immersion. We denote by $\bT^*_ZX$ the conormal bundle of $Z$ in $X$.

%\subsection{}\label{interproper}
%Let $X$ be a scheme over a field of finite type, $f:Y\rightarrow X$ a morphism of finite type and $g:Z\rightarrow X$ a regular immersion of codimension $c$. We say $Z$ {\it intersects}  $Y$ {\it properly} if, for each irreducible component $Y'$ of $Y$, the fiber product $Z'=Y'\times_XZ$ is equidimensional and the canonical injection $Z'\rightarrow Y'$ is of codimension $c$.

\section{Preliminaries in geometry}

\begin{proposition}[cf. {\cite[I, Chapitre 0, 15.1.16]{EGA4}}]\label{flatlemma}
Let $A\rightarrow B$ be a local homomorphism of noetherian local rings, $\kappa$ the residue field of $A$, $b$ an element of the maximal ideal of $B$ and $\bar b\in B\times_A\kappa$ the residue class of $b$. Then, the following conditions are equivalent:
\begin{itemize}\itemsep=0.2cm
\item[(1)]
The quotient $B/bB$ is flat over $A$ and $b$ is a non-zero divisor of $B$;
\item[(2)]
$B$ is flat over $A$ and $\bar b$ is a non-zero divisor of $B\otimes_A\kappa$.
\end{itemize}
\end{proposition}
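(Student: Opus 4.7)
The plan is to prove both implications by combining the long exact sequence for $\mathrm{Tor}$ with the local criterion of flatness, using the short exact sequence
$$0 \to B \xrightarrow{\cdot b} B \to B/bB \to 0$$
as the central object, together with Nakayama's lemma and Krull's intersection theorem to handle the non-zero divisor conditions.

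For $(1)\Rightarrow(2)$, I would start from the displayed short exact sequence, which is exact precisely because $b$ is a non-zero divisor in $B$. Tensoring with a finitely generated $A$-module $N$ and using that $B/bB$ is $A$-flat, the long exact sequence of $\mathrm{Tor}$ shows that multiplication by $b$ is surjective on $\mathrm{Tor}_1^A(B,N)$. Since $A$ is noetherian and $N$ finitely generated, $\mathrm{Tor}_1^A(B,N)$ is a finitely generated $B$-module, so Nakayama's lemma (applied with $b\in\mathfrak{m}_B$) forces $\mathrm{Tor}_1^A(B,N)=0$. By the local flatness criterion, $B$ is $A$-flat. Tensoring the short exact sequence with $\kappa$ then preserves exactness on the left, yielding that $\bar b$ is a non-zero divisor in $B\otimes_A\kappa$.

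For $(2)\Rightarrow(1)$, the first task is to show $b$ is a non-zero divisor in $B$. I would prove by induction on $n$ that the image $b_n$ of $b$ in $B_n := B/\mathfrak{m}_A^{n+1}B$ is a non-zero divisor. The base case is the hypothesis. For the inductive step, if $b_n x = 0$ then the image of $x$ in $B_{n-1}$ vanishes, so $x\in \mathfrak{m}_A^n B/\mathfrak{m}_A^{n+1}B$; flatness of $B$ over $A$ gives the identification
$$\mathfrak{m}_A^n B / \mathfrak{m}_A^{n+1} B \;\cong\; (\mathfrak{m}_A^n/\mathfrak{m}_A^{n+1}) \otimes_\kappa (B\otimes_A\kappa),$$
on which multiplication by $b$ acts as $1\otimes\bar b$ and is therefore injective by assumption. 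Hence $x=0$. Now if $by=0$ in $B$, then $y$ lies in $\bigcap_n \mathfrak{m}_A^n B \subseteq \bigcap_n \mathfrak{m}_B^n$, which is zero by Krull's intersection theorem. Having established that $b$ is a non-zero divisor, the short exact sequence displayed above is valid, and the $\mathrm{Tor}$ long exact sequence combined with flatness of $B$ and injectivity of $\bar b$ on $B\otimes_A\kappa$ yields $\mathrm{Tor}_1^A(B/bB,\kappa)=0$; the local criterion then gives flatness of $B/bB$ over $A$.

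The main technical obstacle is the step in $(2)\Rightarrow(1)$ showing that $b$ is a non-zero divisor in $B$ from the fiber condition alone: one cannot conclude this directly, and the inductive passage through the quotients $B/\mathfrak{m}_A^{n+1}B$, followed by Krull's theorem, is the key maneuver. A minor but genuine point to watch is the finite generation needed to apply Nakayama to the $\mathrm{Tor}$ modules in $(1)\Rightarrow(2)$, which relies on the noetherian hypothesis on $A$.
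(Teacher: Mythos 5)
Your proof is correct. The paper itself gives no argument and simply cites the equivalence of (b) and (c) in \cite[I, Chapitre 0, 15.1.16]{EGA4}, so your write-up supplies a self-contained proof where the paper defers to EGA. The method you use --- the $\mathrm{Tor}$ long exact sequence attached to $0\to B\xrightarrow{\cdot b}B\to B/bB\to 0$, Nakayama on the finitely generated $B$-module $\mathrm{Tor}_1^A(B,N)$ in one direction, and for the converse the $\mathfrak m_A$-adic induction reducing to the graded piece $(\mathfrak m_A^n/\mathfrak m_A^{n+1})\otimes_\kappa(B\otimes_A\kappa)$ followed by Krull intersection, then the local flatness criterion applied to the cyclic $B$-module $B/bB$ --- is essentially the standard argument, and all the small prerequisites you flag (finite generation of the $\mathrm{Tor}$ modules over noetherian $B$ via a resolution of $N$ by finite free $A$-modules; the fact that $B/bB$ is finitely generated over $B$ so the local criterion applies) are handled correctly. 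One cosmetic remark: the statement as printed writes $B\times_A\kappa$, which should be read as $B\otimes_A\kappa$, and you have correctly interpreted it that way throughout.
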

It is deduced by the equivalence of (b) and (c) of \cite[I, Chapitre 0, 15.1.16]{EGA4}.

\begin{proposition}[{\cite[IV, 18.12.1]{EGA4}}]\label{zarmain}
Let $S$ and $D$ be schemes, $f:D\rightarrow S$ a separated morphism locally of finite type, $x$ a point of $D$ and $s=f(x)$. We assume that $x$ is an isolated point of $f^{-1}(s)$. Then, there exists an \'etale morphism $S'\rightarrow S$, a point $x'$ of $D'=D\times_SS'$ above $x$ and a Zariski open and closed neighborhood $V'$ of $x'$ in $D'$ such that  $V'$ is finite over $S'$ and $f'^{-1}(f'(x'))\cap V'=\{x'\}$, where $f':D'\rightarrow S'$ denotes the base change of $f:D\rightarrow S$.
\end{proposition}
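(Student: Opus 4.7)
The statement is a form of Zariski's Main Theorem isolating one point of a quasi-finite morphism after \'etale localization on the base. The plan is as follows. Since $f$ is locally of finite type and $x$ is isolated in $f^{-1}(s)$, the morphism $f$ is quasi-finite at $x$; the quasi-finite locus being open, after shrinking $D$ around $x$ we may assume $f$ is everywhere quasi-finite. Zariski's Main Theorem in Grothendieck's form (\cite[IV, 8.12.6]{EGA4}) then yields, after possibly shrinking $D$ further, a factorization $D\hookrightarrow \overline{D}\xrightarrow{\bar f} S$ where the first arrow is an open immersion and $\bar f$ is finite.

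Next I would split the fiber above $s$ into components \'etale-locally on the base. Write $\mathscr A=\bar f_*\sO_{\overline{D}}$, a coherent $\sO_S$-algebra, and enumerate $\bar f^{-1}(s)=\{x=y_0,y_1,\ldots,y_n\}$. The stalk $\mathscr A_s\otimes_{\sO_{S,s}}\sO^{sh}_{S,s}$ decomposes canonically as a product of local rings indexed by the points of $\bar f^{-1}(\bar s)$. Since $\sO^{sh}_{S,s}$ is the filtered colimit of sections of \'etale neighborhoods of $s$, the finitely many idempotents of this decomposition are already defined over $\mathscr A\otimes_{\sO_S}\sO_{S'}$ for some \'etale neighborhood $\pi:S'\to S$ of $s$. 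Lifting $x$ to a point $x'\in D'=D\times_S S'$ and letting $V'\subseteq \overline{D}'=\overline{D}\times_S S'$ be the open and closed subscheme cut out by the idempotent associated with $x$, one obtains $V'$ finite over $S'$ with $\bar f'^{-1}(f'(x'))\cap V'=\{x'\}$.

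It remains to arrange $V'\subseteq D'$. The closed subset $(\overline{D}'\setminus D')\cap V'$ does not meet the fiber over $f'(x')$ since $x\in D$, and its image in $S'$ is closed because $V'\to S'$ is proper. Shrinking $S'$ to the complement of this image and restricting $V'$ accordingly gives an open and closed neighborhood of $x'$ inside $D'$ which is finite over $S'$ and meets $f'^{-1}(f'(x'))$ only at $x'$. The main obstacle is the descent of the idempotent decomposition from the strict Henselization down to an actual \'etale neighborhood, which is a standard limit argument but carries the genuine \'etale-localization content of the statement.
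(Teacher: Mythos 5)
The paper offers no proof of this proposition: it is stated as a direct citation of \cite[IV, 18.12.1]{EGA4}, so there is no in-paper argument to compare yours against. Your sketch is essentially the argument EGA itself gives, phrased through Zariski's Main Theorem (\cite[IV, 8.12.6]{EGA4}) and idempotent descent from the (strict) henselization, and it is correct.

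Two small points are worth making explicit, since the statement is about scheme points rather than geometric points and about $D'$ rather than the shrunk neighborhood. First, after descending the idempotent, the fact that $f'^{-1}(f'(x'))\cap V'=\{x'\}$ is a single \emph{scheme} point follows because the factor $R$ of $\mathscr A\otimes\sO^{sh}_{S,s}$ associated with $x$ is a local ring, so $R/\mathfrak m_{S'}R$ is local and hence has a unique point; this pins down the fiber of $V'$ over $s'$ even though the residue field may grow. (Using $\sO^{h}_{S,s}$ in place of $\sO^{sh}_{S,s}$ makes the indexing by points of $\bar f^{-1}(s)$ rather than geometric points and slightly streamlines this step, but either works.) Second, because the ambient $D$ has only been shrunk to an open $D_0$ in order to invoke ZMT, the resulting $V'$ is a priori open and closed only in $D_0'$; it is nonetheless open in $D'$ since $D_0'\subseteq D'$ is open, and it is closed in $D'$ because $V'\to S'$ is finite, $D'\to S'$ is separated, and a proper monomorphism is a closed immersion. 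With these remarks filled in, the proposal is a complete proof of the cited result.
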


\begin{proposition}[{\cite[Proposition 5.3]{HY17}}]\label{transversalcurve}
Let $\kappa$ be a field with infinitely many elements, $X$ a connected smooth $\kappa$-scheme of dimension $n\geq 2$, $D$ an effective Cartier divisor on $X$ which is smooth at a $\kappa$-rational point $x\in D$ and $S\subseteq \mathbb T^*_xX$ a closed conical subset of dimension $1$. Then, we can find a smooth $\kappa$-curve $C$ and an immersion $h:C\rightarrow X$ such that $C$ intersects $D$ transversally at $x$ and that $\ker(dh)\cap S=\{0\}$, where $dh:\mathbb T^*_xX\rightarrow\mathbb T^*_xC$ is the canonical map.
\end{proposition}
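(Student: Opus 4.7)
The plan is to reformulate both required properties as linear-algebraic conditions on the cotangent hyperplane $H:=\ker(dh)\subseteq \bT^*_xX$, produce a $\kappa$-rational hyperplane $H$ meeting these conditions by a dimension count in the dual projective space, and finally realize $H$ as the conormal of a smooth curve $C$ cut out by regular functions near $x$.

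\smallskip

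\emph{Reduction to a hyperplane problem.} For any immersion $h:C\ra X$ of a smooth $\kappa$-curve through $x$, the cotangent map $dh$ is surjective and its kernel $H$ is a hyperplane of $\bT^*_xX$, namely the conormal of $C$ in $X$ at $x$. Since $D$ is smooth of codimension $1$ at $x$, its conormal at $x$ is a $\kappa$-line $N_x\subseteq\bT^*_xX$, and dualizing the identity $T_xC+T_xD=T_xX$, transversality of $C$ and $D$ at $x$ becomes $N_x\not\subseteq H$. The condition $\ker(dh)\cap S=\{0\}$ is literal. Hence it suffices to produce a $\kappa$-hyperplane $H\subseteq\bT^*_xX$ with $N_x\not\subseteq H$ and $H\cap S=\{0\}$, and to realize it. Given such an $H$, lift a $\kappa$-basis to $f_1,\ldots,f_{n-1}\in\fm_x\subseteq\sO_{X,x}$ and spread them to regular functions on an affine neighborhood $U$ of $x$; by the Jacobian criterion, $C':=V(f_1,\ldots,f_{n-1})\subseteq U$ is smooth of dimension $1$ near $x$, so after shrinking we may assume $C'$ is everywhere smooth. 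Taking $C$ to be the connected component of $C'$ containing $x$ and $h:C\ra X$ the inclusion, the conormal of $C$ at $x$ is $\kappa\langle df_1|_x,\ldots,df_{n-1}|_x\rangle=H$ by construction.

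\smallskip

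\emph{Existence of $H$.} Projectivize: since $S$ is conical of dimension $1$, $\mathbb P(S)\subseteq \mathbb P(\bT^*_xX)\simeq \mathbb P^{n-1}_\kappa$ is closed of dimension $0$, and $\mathbb P(N_x)$ is a $\kappa$-rational point. Set $Z=\mathbb P(S)\cup\mathbb P(N_x)$, a $0$-dimensional closed subset. Parametrizing hyperplanes of $\mathbb P^{n-1}_\kappa$ by the dual $(\mathbb P^{n-1}_\kappa)^\vee$, the bad locus of hyperplanes meeting $Z$ is the image under the first projection of the incidence variety $\{(H,p)\,|\,p\in H\cap Z\}$, whose dimension is at most $\dim Z+(n-2)=n-2<n-1$. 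So the complement is a nonempty Zariski open subset of $(\mathbb P^{n-1}_\kappa)^\vee$; since $\kappa$ is infinite, it contains a $\kappa$-rational point, yielding $H$.

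\smallskip

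The main obstacle is the dimension count in the second step. It uses crucially that $\dim S=1$, so that $\dim \mathbb P(S)=0$ and the hyperplanes containing at least one point of $Z$ fill only a proper closed subset of $(\mathbb P^{n-1}_\kappa)^\vee$. The infiniteness of $\kappa$ is then exactly what is needed to extract a $\kappa$-rational hyperplane from this nonempty open subset.
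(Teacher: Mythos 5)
Your proof is correct. The paper does not give its own argument here but simply cites \cite[Proposition~5.3]{HY17}; your argument is the standard Bertini-type proof that the cited reference uses: translate both requirements into linear conditions on the hyperplane $H=\ker(dh)$ (with transversality to $D$ becoming $N_x\not\subseteq H$ via duality), count dimensions in $(\mathbb P^{n-1}_\kappa)^\vee$ using $\dim S=1$ to make the bad locus proper closed, invoke the infiniteness of $\kappa$ to produce a $\kappa$-rational hyperplane in the complement, and then realize $H$ as the conormal of a smooth curve by lifting a basis to local equations and applying the Jacobian criterion.
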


\begin{proposition}\label{goodsection}
Let $S$ be an irreducible separated scheme over an algebraically closed field $\kappa$ of finite type and $s\in S$ a closed point such that $S$ is regular at $s$. Let $Z\subseteq\mathbb A^n_S$ be a closed subscheme with $Z\neq \mathbb A^n_S$ containing $s\times O$, where $O$ denotes the origin of $\mathbb A^n_\kappa$ $(n\geq 1)$. Then, there exist an open dense subscheme $V$ of $S$ and an immersion $\sigma:V\to\mathbb A^n_S$ such that $s\times O\in\sigma(V)$, $V\not\subset Z$ and that the composition of $\sigma:V\to\mathbb A^n_S$ and $\pi:\mathbb A^n_S\to S$ is the canonical injection.
\end{proposition}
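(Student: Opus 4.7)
My plan is to construct $\sigma$ as a perturbation of the zero section in the direction of a single regular parameter at $s$. The main technical input will be the integrality of $\mathcal{O}_{S,s}$ supplied by the regularity hypothesis together with the infinitude of $\kappa$.

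First I would shrink $S$ to an affine open neighborhood $U=\Spec A$ of $s$ contained in the regular locus of $S$; this is legitimate because the regular locus of a finite-type $\kappa$-scheme is open, and because any open dense subset of the irreducible $U$ is also open dense in $S$. Being regular, $A$ is reduced, and a reduced ring with irreducible spectrum is an integral domain. I may also assume $Z\subsetneq\bA^n_S$, since otherwise no $V$ and $\sigma$ can satisfy $\sigma(V)\not\subset Z$; because $\bA^n$ is geometrically integral over the algebraically closed field $\kappa$, the scheme $\bA^n_S$ is irreducible, so the ideal $I\subset A[x_1,\ldots,x_n]$ cutting out $Z\cap\bA^n_U$ is nonzero.

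Assuming $\dim\mathcal O_{S,s}\geq 1$ (otherwise $S=\{s\}$ and $(s,O)\in Z$ trivially forces every admissible section into $Z$), pick some nonzero $t\in\fm_s$, which exists and is not a zero divisor in $\mathcal O_{S,s}$ because that ring is an integral domain. For each $y=(y_1,\ldots,y_n)\in\kappa^n$, consider the section $\sigma_y:U\to\bA^n_U$ corresponding to the $A$-point $(y_1t,\ldots,y_nt)$. Each $\sigma_y$ is a section of the separated morphism $\pi|_U$, hence a closed immersion into $\bA^n_U$; composing with the open immersion $\bA^n_U\hookrightarrow\bA^n_S$ gives an immersion, and $t(s)=0$ ensures $\sigma_y(s)=(s,O)$. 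It remains to find $y_0\in\kappa^n$ with $\sigma_{y_0}(U)\not\subset Z$. Fix a nonzero $g\in I$ and expand $g=\sum_\alpha c_\alpha x^\alpha$ with $c_\alpha\in A$; evaluation along $\sigma_y$ yields
\[
g(y_1t,\ldots,y_nt)\;=\;\sum_\alpha\bigl(c_\alpha t^{|\alpha|}\bigr)y^\alpha\;\in\;A[y_1,\ldots,y_n].
\]
If this polynomial in $y$ were identically zero, each coefficient $c_\alpha t^{|\alpha|}$ would vanish in $A$; passing to the integral domain $\mathcal O_{S,s}$ in which $t$ is a nonzero non-zero-divisor forces every $c_\alpha$ to vanish, contradicting $g\neq 0$. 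Since $A$ is a $\kappa$-algebra and $\kappa$ is infinite, a standard Vandermonde argument then produces $y_0\in\kappa^n$ with $g(y_{0,1}t,\ldots,y_{0,n}t)\neq 0$ in $A$, so that $\sigma_{y_0}(U)\not\subset V(g)\supset Z$. Taking $V=U$ and $\sigma=\sigma_{y_0}$ satisfies every required property, $V$ being open dense in the original $S$ by irreducibility.

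The main subtle point is the polynomial nonvanishing step, which is precisely where the regularity hypothesis enters: without the integrality of $\mathcal O_{S,s}$, the element $c_\alpha t^{|\alpha|}$ could vanish in $A$ for every $\alpha$ even when $g\neq 0$, and no linear section through $(s,O)$ would be guaranteed to escape $Z$.
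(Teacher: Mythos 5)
Your proof is correct and follows essentially the same approach as the paper's: perturb the zero section by $\kappa$-scalings of a fixed nonzero element of $\mathfrak m_{S,s}$, and use the integrality of $\mathcal O_{S,s}$ (from regularity plus irreducibility) together with the infinitude of $\kappa$ to find a scaling that escapes $Z$. The paper phrases the final step as a density statement for $\{x_{\underline\lambda}\}$ in $\bA^n_\eta$ rather than expanding a specific $g$, but the content is identical; your treatment of the degenerate case $\dim S = 0$ and the need to shrink so that $t\in A$ are helpful details the paper leaves implicit.
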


\begin{proof}
We denote by $\eta$ the generic point of $S$.
Let $y$ be a non-zero element of the maximal ideal $\mathfrak m_{S,s}$ of $\mathscr O_{S,s}$. For $\underline\lambda=(\lambda_1,\lambda_2,\dots,\lambda_n)\in \kappa^n$, we define a homomorphism of $\mathscr O_{S,s}$-algebras 
\begin{align}
g_{\underline\lambda}:\mathscr O_{S,s}[x_1,\dots,x_n]&\to \mathscr O_{S,s},\ \ \ x_i\mapsto \lambda_iy.
\end{align}
We can find an open dense subset $V$ of $S$ such that, for any $\underline\lambda\in\kappa^n$, the homomorphism $g_{\underline\lambda}$ gives rise to a $\kappa$-morphism $h_{\underline\lambda}:V\to\bA^n_S$. The composition of each $h_{\underline\lambda}:V\to\bA^n_S$ and $\pi:\mathbb A^n_S\to S$ is the canonical injection $V\to S$, and $h_{\underline\lambda}(s)=s\times O$. The image $h_{\underline\lambda}(\eta)\subset \bA^n_{\eta}$ of the generic point $\eta\in V$ is the closed point $x_{\underline\lambda}=(\lambda_1y,\dots,\lambda_ny)\in\bA^n_{\eta}(k(\eta))$. 

Note that $\kappa$ is algebraically closed. Thus the set of closed points 
$\{x_{\underline\lambda}\;|\;\underline\lambda\in\kappa^n\}\subset \bA^n_{\eta}(k(\eta))$
is dense in $\mathbb A^n_\eta$. Since $Z\neq\bA^n_S$, the intersection $Z\bigcap \mathbb A^n_{\eta}$ is a closed subscheme of $\mathbb A^n_{\eta}$ which does not contain the generic point of $\mathbb A^n_{\eta}$. Hence, there exists $\underline\lambda\in \kappa^n$ such that $x_{\underline\lambda}\not\in Z$. The associated immersion $h_{\underline\lambda}:V\to\mathbb A^n_S$ satisfies the conditions of the proposition.
\end{proof}

\begin{proposition}[cf. {\cite[III, 9.5.3]{EGA4}}]\label{genetosp2}
Let $S$ be an irreducible noetherian scheme, $g:D\rightarrow S$ a morphism of finite type, $\{D_i\}_{i\in I}$  the set of irreducible components of $D$. We assume that, for each $i\in I$, the restriction $f|_{D_i}:D_i\rightarrow S$ is surjective. Then there exists an open dense subset $V\subseteq S$ such that, for every point $v\in V$ and for any indices $i, j\in I$ $(i\neq j)$, the fibers $D_{i,v}$ and $D_{j,v}$ do not have common irreducible components.
\end{proposition}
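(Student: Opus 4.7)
The plan is to reduce to one pair $i\neq j$ at a time: I will find an open dense $V_{ij}\subseteq S$ over which the fibers $D_{i,v}$ and $D_{j,v}$ share no irreducible component, then take the finite intersection $V=\bigcap_{i\neq j}V_{ij}$, which is still open and dense since $D$ is noetherian and consequently $I$ is finite.

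The key tool is the cited Chevalley-type result \cite[III, 9.5.3]{EGA4} together with generic flatness, which jointly deliver equidimensional fibers over an open dense locus. Since each $D_i\to S$ is surjective and $D_i$ is irreducible, the generic point of $D_i$ sits over the generic point $\eta$ of $S$; generic flatness then provides an open dense $U_i\subseteq S$ over which the fibers of $D_i\to S$ are of pure dimension $\dim D_i-\dim S$, and likewise a $U_j$ for $D_j$. Let $Z_{ij}=D_i\cap D_j$ endowed with its reduced structure; irreducibility of $D_i$ together with $Z_{ij}\subsetneq D_i$ forces $\dim Z_{ij}<\dim D_i$.

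The main step is then to shrink further so as to control $Z_{ij}$ in fibers. I split into two cases according to whether $g(Z_{ij})$ is dense in $S$. If it is not, the complement of its closure is already an open dense locus $U_{ij}$ with $Z_{ij,v}=\emptyset$ for every $v\in U_{ij}$. If it is dense, generic flatness applied to each irreducible component of $Z_{ij}$ dominating $S$ yields an open dense $U_{ij}$ over which every irreducible component of $Z_{ij,v}$ has dimension at most $\dim Z_{ij}-\dim S<\dim D_i-\dim S$. On $V_{ij}=U_i\cap U_j\cap U_{ij}$, any shared irreducible component $C$ of $D_{i,v}$ and $D_{j,v}$ would lie in $Z_{ij,v}$ and hence have dimension strictly less than $\dim D_i-\dim S$; but as a component of the equidimensional fiber $D_{i,v}$ it must have dimension exactly $\dim D_i-\dim S$, a contradiction. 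The only point requiring care is this equidimensionality of generic fibers, which is precisely the content of the cited reference; no further obstacle is expected beyond routine noetherian bookkeeping.
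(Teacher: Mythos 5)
Your reduction to pairwise comparisons and the focus on $Z_{ij}=D_i\cap D_j$ are sound, but the dimension bookkeeping has a genuine gap. First, $\dim D_i-\dim S$ is not the right invariant: $S$ is only assumed irreducible noetherian, so it need not be catenary nor even finite-dimensional, and the identity $\dim D_i-\dim S=\dim D_{i,\eta}$ can fail (the left side may even be $\infty-\infty$). You should work directly with $d_i:=\dim D_{i,\eta}$, which is finite since $D_{i,\eta}$ is of finite type over the field $k(\eta)$ and is the quantity that actually enters the fiberwise comparison. Second, and more critically, generic flatness does \emph{not} by itself give that the fibers $D_{i,v}$ are equidimensional of dimension $d_i$ over an open dense locus, and that equidimensionality is exactly the step your contradiction hinges on. Flatness controls local dimensions via $\dim\mathscr O_{D_i,x}=\dim\mathscr O_{S,g(x)}+\dim\mathscr O_{(D_i)_{g(x)},x}$, which alone does not force equidimensionality of the fibers without catenary hypotheses on $D_i$. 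To patch this one needs the Chevalley-type constructibility of fiber-dimension strata (\cite[IV, 13.1.3, 13.1.5]{EGA4}): the locus where $D_{i,s}$ is equidimensional of dimension $d_i$ is constructible and contains $\eta$, hence contains an open dense subset.

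The cited \cite[III, 9.5.3]{EGA4} offers a more economical route that sidesteps dimension theory entirely. Since $D_i$ is irreducible and dominates $S$, the generic fiber $D_{i,\eta}$ is irreducible, and $Z_{ij,\eta}=D_{i,\eta}\cap D_{j,\eta}$ is a proper closed (hence nowhere dense) subset of it: if $Z_{ij,\eta}=D_{i,\eta}$, the generic point of $D_i$ would lie in $D_j$, forcing $D_i\subseteq D_j$, impossible for two distinct irreducible components. The condition ``$Z_{ij,s}$ is nowhere dense in $D_{i,s}$'' is locally constructible in $s$, so it holds on an open dense $V_{ij}$; and a shared irreducible component of $D_{i,v}$ and $D_{j,v}$ would lie in $Z_{ij,v}$ yet have nonempty interior in $D_{i,v}$, a contradiction. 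Your approach can be repaired by replacing $\dim D_i-\dim S$ with $d_i$ throughout and by invoking constructibility of fiber-dimension strata in place of the bare appeal to generic flatness, but as written the equidimensionality step would not hold in the generality of the proposition.
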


\begin{proposition}[cf. {\cite[III, 9.7.7]{EGA4}}]\label{genetosp1}
Let $S$ and $D$ be integral $k$-schemes and $f:D\rightarrow S$ a smooth $k$-morphism. Then there exist an irreducible $k$-scheme $W$ and an \'etale map $h:W\rightarrow S$ such that,
\begin{itemize}\itemsep=0.2cm
\item[(1)]
Each connected component of $D\times_SW$ is irreducible;
\item[(2)]
Every connected component of  $D\times_SW$ has geometrically irreducible fibers over $W$.
\end{itemize}
\end{proposition}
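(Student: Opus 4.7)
My plan is to reduce to the generic fiber, find a finite separable extension $L/k(\eta)$ trivializing the component structure there, and spread out both $L$ and the resulting decomposition to an open $V\subseteq S$, invoking constructibility of geometric irreducibility \cite[III, 9.7.7]{EGA4} to upgrade generic properties to properties holding on all fibers over the resulting étale base.

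First I would analyze the generic fiber $D_\eta$. Smoothness of $f$ makes $D_\eta$ geometrically reduced, so each of its (disjoint) irreducible components $D_\eta^{(i)}$ has its field of constants $L_i$ (the algebraic closure of $k(\eta)$ in $k(D_\eta^{(i)})$) finite separable over $k(\eta)$, and $D_\eta^{(i)}$ is geometrically irreducible over $\Spec(L_i)$. Taking $L/k(\eta)$ to be a finite separable extension containing every $L_i$, the base change $D_\eta\otimes_{k(\eta)}L$ splits into a disjoint union $\bigsqcup_j Y_j$ of geometrically irreducible components. I would then spread $L/k(\eta)$ out to an étale morphism $h:W\to V$ with $V\subseteq S$ open dense and $W$ irreducible satisfying $W_\eta=\Spec(L)$. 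On $D\times_SW$, the scheme-theoretic closures $\overline{Y_j}$ of the generic components are irreducible and, by flatness of $D\times_SW\to W$, cover $D\times_SW$. The pairwise intersections $\overline{Y_j}\cap\overline{Y_k}$ avoid the generic fiber, so after a finite number of shrinkings of $V$ they become empty, and the $\overline{Y_j}$ constitute the connected components of $D\times_SW$, yielding (1). For (2), applying \cite[III, 9.7.7]{EGA4} to each $\overline{Y_j}\to W$, the locus where the fiber is geometrically irreducible is constructible and contains the generic point of $W$, so contains an open dense subset; a last shrinking of $V$ makes this locus equal to $W$ for every $j$.

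The main obstacle is the bookkeeping of the spreading-out and successive shrinkings: one must simultaneously enforce that (i) the $L_i$ land in a common finite separable extension $L$ which spreads out to an irreducible étale cover $W\to V$, (ii) the finitely many closures $\overline{Y_j}$ become pairwise disjoint over $V$, and (iii) each $\overline{Y_j}$ has geometrically irreducible fibers over all of $W$. Each condition is achievable by removing a proper closed subset of $S$, and since the number of components is finite, the intersection of the resulting opens supplies the required $V$. Crucially, the statement does not require $h$ to be surjective, so such shrinkings are permitted. Flatness of $D\times_SW\to W$ (ensuring the generic fiber is schematically dense so that the $\overline{Y_j}$ exhaust $D\times_SW$) and the constructibility theorem \cite[III, 9.7.7]{EGA4} (passing from a geometrically irreducible generic fiber to geometrically irreducible fibers over an open) are the two key ingredients powering each shrinking.
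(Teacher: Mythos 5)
The paper does not actually supply a proof of this proposition; it is stated with a pointer to EGA~IV, 9.7.7 and used as a black box. Your argument is a correct reconstruction of the standard spreading-out proof implicit in that reference. Two small remarks. First, since $D$ is integral and $f$ is smooth, hence flat and open, $f$ is dominant and the generic point of $D$ lies in the generic fibre $D_\eta$ and is dense there; thus $D_\eta$ is already irreducible, there is a single field of constants $L$, and the discussion of several components $D_\eta^{(i)}$ at the generic level is superfluous (though harmless). One should also record why $L$ sits inside $\Gamma(D_\eta,\mathscr O_{D_\eta})$: smoothness makes $D_\eta$ regular, hence normal, so its ring of global sections is integrally closed in $k(D_\eta)$. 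Second, it is worth saying explicitly that the finitely many shrinkings of $V$ keep $W=h^{-1}(V)$ irreducible; this holds because $W$ is integral and each shrinking replaces $V$ by a dense open still containing $\eta$, so $h^{-1}(V)$ still contains the generic point of $W$. With these cosmetic points aside, your three ingredients, namely finiteness and separability of the field of constants coming from geometric reducedness of the smooth generic fibre, schematic density of the generic fibre of $D\times_SW\to W$ coming from flatness and reducedness so that the closures $\overline{Y_j}$ exhaust $D\times_SW$, and constructibility of geometric irreducibility in fibres from EGA~IV, 9.7.7, are exactly what the cited reference is pointing to, and the proof is sound.
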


%\begin{corollary}\label{compcycle}
%Let $S$ be an irreducible $k$-scheme, $f:X\rightarrow S$ a smooth $k$-morphism, $\{D_i\}_{i\in I}$ a set of effective Cartier divisors on $X$ relative to $S$, $D$ the sum of all $D_i$ $(i\in I)$. For every $i\in I$, we assume that $D_i$ is irreducible and that $f|_{D_i}:D_i\rightarrow S$ is smooth.
% Let $R=\sum_{i\in I} r_i\cdot D_i$ be a Cartier divisor on $X$. If there exists an open dense subset $V\subseteq S$ such that, for each closed point $t\in V$, the fiber $R_t$ is an effective Cartier divisor on $X_t$ (\ref{fiberschdiv}), then $R$ is also effective.
%\end{corollary}
%\begin{proof}
%We may assume that $S$ is integral. By proposition \ref{genetosp2}, this problem can be  reduced to the case where $D$ is irreducible. Let $h:W\rightarrow S$ be an \'etale morphism and we denote by $h_W:X_W=X\times_SW\rightarrow X$ its base change. Then $R$ is effective if and only if $h^*_WR$ is effective. Hence, replacing $S$ by an \'etale neighborhood, we may assume that, for any point $s\in S$, the fiber $D_s$ is geometrically irreducible (proposition \ref{genetosp1}). In this case, for any closed point $s\in S$ such that $D_s\neq \emptyset$, the coefficient of $D_s$ in $R_s$ equals the coefficient of $D$ in $R$.
%\end{proof}

\section{Complements in ramification theory}
\begin{lemma}\label{Hasse-Arf-conductor}
Let $K$ be a complete discrete valuation field of characteristic $p>0$, $\mathscr O_K$ its integer ring and $F$ the residue field of $\mathscr O_K$. Let $\overline K$ be a separable closure of $K$ and we denote by $G_K$ the Galois group of $\overline K/K$. Let $M$ be a non-zero finitely generated $\Lambda$-modules with a continuous $G_K$-action. Then, there exist integers $0<r_1,r_2\leq\mathrm{rk}_{\Lambda}M$ such that 
\begin{equation*}
\rc_K(M)\in \frac{1}{r_1}\mathbb Z\ \ \ \textrm{and}\ \ \ \rlc_K(M)\in \frac{1}{r_2}\mathbb Z.
\end{equation*}
\end{lemma}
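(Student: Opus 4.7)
The plan is to reduce to the isoclinic case and then exploit the integrality of the total dimension and of the Swan conductor, which are the Hasse--Arf type theorems for Abbes--Saito's ramification filtrations. The idea is that for a module having a single slope, the (logarithmic) conductor is forced to equal the total dimension (resp.\ the Swan conductor) divided by the rank, so the denominator of the slope is bounded by the rank.

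For the conductor statement, I would consider the slope decomposition $M=\bigoplus_{r\in\mathbb Q_{\geq 1}}M^{(r)}$ and set $r:=\rc_K(M)$, so that the piece $N:=M^{(r)}$ is nonzero and isoclinic of slope $r$, meaning $N=N^{(r)}$. The definition of the total dimension in \eqref{introdtmod} then gives
\begin{equation*}
\dt_K(N)=r\cdot\dim_\Lambda N.
\end{equation*}
Invoking the Hasse--Arf theorem for Abbes--Saito's non-logarithmic ramification filtration (in the classical perfect residue field case this is the classical Hasse--Arf theorem; in general it follows from the integrality results of Abbes--Saito and Xiao), one has $\dt_K(N)\in\mathbb Z_{\geq 0}$. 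Setting $r_1:=\dim_\Lambda N$ yields $0<r_1\leq \rk_\Lambda M$ and $r=\dt_K(N)/r_1\in \tfrac{1}{r_1}\mathbb Z$, as required.

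For the logarithmic conductor statement the argument is strictly parallel, but uses instead the logarithmic slope decomposition $M=\bigoplus_{s\in\mathbb Q_{\geq 0}}M_{\log}^{(s)}$. Taking $s:=\rlc_K(M)$ and $N':=M_{\log}^{(s)}$, which is log-isoclinic of slope $s$, one obtains
\begin{equation*}
\sw_K(N')=s\cdot\dim_\Lambda N',
\end{equation*}
and the integrality $\sw_K(N')\in\mathbb Z_{\geq 0}$ provided by the Hasse--Arf theorem for Abbes--Saito's logarithmic filtration (Xiao's theorem in the imperfect residue field case) lets one take $r_2:=\dim_\Lambda N'$.

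The only nontrivial input is the Hasse--Arf integrality of $\dt_K$ and $\sw_K$; everything else is a formal consequence of the definition of the slope decompositions and of $\rc_K$, $\rlc_K$. The main obstacle is therefore to locate in the literature the exact reference for the integrality of the Abbes--Saito conductors applied to a finitely generated $\Lambda$-module whose wild inertia action factors through a finite quotient, which is the standing assumption of \ref{introlc+1>c>lc}.
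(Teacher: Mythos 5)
Your proof is correct and follows essentially the same route as the paper: take the isoclinic piece of the (logarithmic) slope decomposition at the top slope, apply the Hasse--Arf integrality of $\dt_K$ (resp.\ $\sw_K$) from Xiao and Saito, and set $r_1$ (resp.\ $r_2$) to be the rank of that piece. The references you were looking for are \cite[Theorem 3.4.3]{xiao} and \cite[Proposition 3.10]{wr}, exactly as cited in the paper.
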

\begin{proof}
Let $M=\bigoplus_{r\geq 1}M^{(r)}$ (resp. $M=\bigoplus_{s\geq 0}M^{(s)}_{\log}$) be the slope decomposition (resp. logarithmic slope decomposition) of $M$ . Each $M^{(r)}$ is a finite generated $\Lambda$-module with continuous $G_K$-actions. By the Hasse-Arf theorem for Abbes-Saito's ramification filtrations (\cite[Theorem 3.4.3]{xiao}, \cite[Proposition 3.10]{wr}), we obtain that
\begin{align*}
\dt_K(M^{(\rc_K(M))})&=\rc_K(M)\cdot\mathrm{rk}_\Lambda \left(M^{(\rc_K(M))}\right),\\sw_K(M^{(\rlc_K(M))})&=\rlc_K(M)\cdot\mathrm{rk}_\Lambda \left(M^{(\rlc_K(M))}_{\log}\right)
\end{align*}
are integers. Put $r_1=\mathrm{rk}_\Lambda \left(M^{(\rc_K(M))}\right)$ and $r_2=\mathrm{rk}_\Lambda \left(M^{(\rlc_K(M))}_{\log}\right)$. We have 
\begin{equation*}
\rc_K(M)\in \frac{1}{r_1}\mathbb Z\ \ \ \textrm{and}\ \ \ \rlc_K(M)\in \frac{1}{r_2}\mathbb Z.
\end{equation*}
\end{proof}

\begin{proposition}[{\cite[Proposition 3.15 (3)]{as1}}]\label{logext}
Let $K$ be a complete discrete valuation field, $\mathscr O_K$ its integer ring and $F$ the residue field of $\mathscr O_K$. We assume that the characteristic of $F$ is $p>0$.  Let $K'$ be a finite separable extension of $K$ contained in $\overline K$ of ramification index $e$. We denote by $G_{K'}$ the Galois group of $\overline K$ over $K'$ and $G_{K',\log}^r$ ($r\in \mathbb Q_{\geq 0}$) the logarithmic ramification filtration of $G_{K'}$. Then, for any $r\in \mathbb Q_{> 0}$, we have $G^{er}_{K',\log}\subseteq G^r_{K,\log}$. If $K'$ is tamely ramified over $K$, the inclusion is an equality.
\end{proposition}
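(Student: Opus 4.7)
The plan is to use Abbes-Saito's geometric definition of the logarithmic ramification filtration via tubular neighborhoods, combined with a reduction to a Galois extension.

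First, I would reduce to the Galois case. Choose a finite Galois extension $L$ of $K$ inside $\overline K$ containing $K'$, and set $G=\mathrm{Gal}(L/K)$ and $H=\mathrm{Gal}(L/K')$. The inclusion $G_{K',\log}^{er}\subseteq G_{K,\log}^r$ will follow by passing to the projective limit over such $L$ once the analogous inclusion $H_{\log}^{er}\subseteq G_{\log}^r$ is established for the induced logarithmic ramification filtrations of the finite groups $H\subseteq G$ attached to $L/K'$ and $L/K$.

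Second, I would unwind the geometric construction. For $r\in\mathbb Q_{>0}$, $G^r_{\log}$ is defined as the stabilizer of each geometric connected component of a rigid-analytic tubular neighborhood $X^r_{\log}(L/K)$ built from $\mathcal O_L$ as an $\mathcal O_K$-algebra together with the logarithmic dilatation of parameter $r$ controlled by $\mathfrak m_K^r$. The parallel construction with base $\mathcal O_{K'}$ and parameter $er$ yields the corresponding space $X^{er}_{\log}(L/K')$ whose component stabilizers define $H^{er}_{\log}$. The essential input is the ramification-index identity $\mathfrak m_K\mathcal O_{K'}=\mathfrak m_{K'}^e$, which ensures that the change of base produces an $H$-equivariant morphism $X^{er}_{\log}(L/K')\to X^r_{\log}(L/K)$: the log radius $r$ on the $K$-side pulls back to log radius $er$ on the $K'$-side. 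Any $\sigma\in H^{er}_{\log}$ fixing each component of the source therefore fixes each component of the target, giving the inclusion $H^{er}_{\log}\subseteq G^r_{\log}\cap H\subseteq G^r_{\log}$.

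Third, for the tame case, after an auxiliary tamely ramified base change one can trivialize $K'/K$ as a Kummer-type cover, and the above morphism of tubes becomes a disjoint union of isomorphisms, yielding the reverse inclusion and hence equality. The main obstacle is to justify the functoriality of the log-tubular construction when the residue field extension is not separable: this requires Abbes-Saito's precise normalization of log structures on $\mathcal O_K$ and $\mathcal O_{K'}$ and a careful comparison of the associated dilatations. Once this functoriality is set up, the argument above is essentially formal, and the tame simplification is then transparent.
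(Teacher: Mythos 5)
The paper does not reprove this proposition---it is cited directly from Abbes--Saito (\cite[Proposition 3.15 (3)]{as1}), so there is no in-paper argument to compare against. Your reconstruction follows Abbes--Saito's rigid-geometric approach via logarithmic tubular neighborhoods, which is the correct framework, and the overall shape (reduce to a finite Galois envelope $L/K$ containing $K'$; produce an $H$-equivariant morphism of tubes respecting the $e$-rescaling of the parameter; deduce the inclusion from component stabilizers) is faithful to the source. Once the equivariant morphism $\phi\colon X^{er}_{\log}(L/K')\to X^r_{\log}(L/K)$ exists, your component argument does close cleanly: a connected component $Z$ of the source has connected image, hence $\phi(Z)$ lies in a single component $W$ of the target; $H$-equivariance gives $\sigma(W)\cap W\neq\emptyset$ and therefore $\sigma(W)=W$; and since $G^r_{K,\log}$ is normal in $G$ and $G$ acts transitively on the set of geometric components, stabilizing one component already places $\sigma$ in $G^r_{K,\log}$.

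The places that need more care are exactly the ones you flag, and they are not small. The existence of the morphism of tubes with the stated parameter shift is not formal: $\mathfrak m_K\mathcal O_{K'}=\mathfrak m_{K'}^e$ is the right input, but the Abbes--Saito log-tube depends on a chosen presentation of $\mathcal O_L$ and on a chart for the log structure along the closed fiber, and establishing functoriality with the correct rescaling by $e$ is precisely the technical content behind their Lemma 3.12 and Proposition 3.15; it cannot be waved through. For the tame equality two things are missing. First, the comparison $G^{er}_{K',\log}=G^r_{K,\log}$ only makes sense because $G^r_{K,\log}\subseteq P_K\subseteq G_{K'}$ for $r>0$, which uses that $K'/K$ is tame and should be said explicitly. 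Second, the assertion that after Kummer-trivialization the morphism of tubes becomes a disjoint union of isomorphisms is the entire content of the reverse inclusion and rests on an explicit log-tube computation for a Kummer presentation; as written, the tame case is asserted rather than established.
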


\begin{lemma}\label{bcofclc}
Let $X$ be a smooth $k$-scheme, $D$ a reduced Cartier divisor of $X$,  $U$ the complement of $D$ in $X$ and  $j\colon U\to X$ the canonical injection. Let $\sF$ be a locally constant and constructible \'etale sheaf of $\Lambda$-modules on $U$.  Let $k'$ be an algebraic extension of $k$, and $g:X_{k'}=X\times_{k}k'\rightarrow X$ the canonical projection. We have
 \begin{equation}\label{basechangeclc}
g^*(\mathrm{C}_X(j_!\mathscr F))=\mathrm{C}_{X_{k'}}(g^*j_!\mathscr F)\ \ \ \text{and}\ \ \ g^*(\mathrm{LC}_X(j_!\mathscr F))=\mathrm{LC}_{X_{k'}}(g^*j_!\mathscr F).
 \end{equation}
\end{lemma}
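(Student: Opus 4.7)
The plan is to unpack the definitions of the conductor and logarithmic conductor divisors and to verify, coefficient by coefficient, that both sides of each identity in \eqref{basechangeclc} compute the very same local ramification invariant.

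First, I would write $D=\sum_{i\in I}D_i$ as a sum of its distinct irreducible components and reduce the equality of Cartier divisors with rational coefficients to the equality of coefficients along each irreducible component of $g^{-1}(D)$. Using the generic smoothness hypothesis underlying \ref{YEW}, $g^*D_i$ is reduced and decomposes as $g^*D_i=\sum_{j\in J_i}D'_{i,j}$, where the $D'_{i,j}$ are the pairwise distinct irreducible components of $D_i\times_k k'$. By the definition of the pullback of a Cartier divisor, the coefficient of $D'_{i,j}$ in $g^*(\rC_X(j_!\sF))$ equals $\rc_{K_i}(\sF|_{\eta_i})$; by the definition of the conductor divisor applied on $X_{k'}$, the coefficient of $D'_{i,j}$ in $\rC_{X_{k'}}(g^*j_!\sF)$ equals $\rc_{K'_{i,j}}((g^*\sF)|_{\eta'_{i,j}})$, where $K'_{i,j}$ and $\eta'_{i,j}$ are built as in \ref{YEW} from $X_{k'}$ and an algebraic closure of $k'$. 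The core of the argument is to identify these two invariants.

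For this, I would choose an algebraic closure $\ol k$ of $k$ that contains $k'$. Since $k'/k$ is algebraic, $\ol k$ is simultaneously an algebraic closure of $k'$, so it can be used to compute both invariants. The scheme $D'_{i,j,\ol k}$ identifies with the union of those irreducible components of $D_{i,\ol k}$ whose image lies in $D'_{i,j}$ under the projection $X_{\ol k}\to X_{k'}$; in particular, a geometric generic point $\bar\xi$ of an irreducible component of $D'_{i,j,\ol k}$ is simultaneously a geometric generic point of an irreducible component of $D_{i,\ol k}$. With this common choice of $\bar\xi$, the strict localization of $X_{\ol k}$ at $\bar\xi$ and that of $(X_{k'})\times_{k'}\ol k=X_{\ol k}$ at $\bar\xi$ are canonically the same strictly henselian DVR, and their fraction fields realize the identification $K_i=K'_{i,j}$. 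Under this identification, $\sF|_{\eta_i}$ and $(g^*\sF)|_{\eta'_{i,j}}$ correspond to the same $G_{K_i}$-module, since $g^*\sF$ is tautologically the pullback of $\sF$ along the projection. The equality of the conductors, and likewise of the logarithmic conductors, then follows from the intrinsic definition of $\rc_K$ and $\rlc_K$ via the slope and logarithmic slope decompositions recalled in \ref{introlc+1>c>lc}.

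The logarithmic case requires no new argument. I do not anticipate a real obstacle: the entire proof is bookkeeping of geometric points together with the observation that algebraic extensions of the ground field leave the local Galois module structure at a compatibly chosen geometric generic point of a boundary component unchanged. The only minor subtlety is to invoke the generic smoothness of each $D_i$ to guarantee that $g^*D_i$ is reduced, so that no multiplicities intervene on the left-hand side.
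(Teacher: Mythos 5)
Your proof is correct and spells out exactly what the paper leaves implicit: the paper merely asserts that the lemma ``is a direct consequence from the definition'' without giving details. Your coefficient-by-coefficient check—choosing a common algebraic closure $\ol k\supset k'$, observing that each irreducible component of $D'_{i,j,\ol k}$ is also one of $D_{i,\ol k}$ so that the strict localization, the discretely valued field $K_i$, and the Galois module all literally coincide, and invoking generic smoothness of each $D_i$ to ensure $g^*D_i$ is reduced—is the argument the paper's assertion tacitly relies on.
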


This lemma is a direct consequence from the definition of the (logarithmic) conductor divisor.

\begin{lemma}\label{bcofgcglc}
We take the notation and assumptions in subsection \ref{nothigherdim}.
\begin{itemize}\itemsep=0.2cm
\item[(i)]
Let $S'$ be an irreducible smooth $k$-scheme of finite type and $\pi:S'\rightarrow S$ a dominant and generically finite $k$-morphism. We denote by
$f':X'\rightarrow S'$ (resp. by $\pi':X'\rightarrow X$) the base change of $f:X\rightarrow S$ (resp. $\pi:S'\rightarrow S$). We have
\begin{equation}\label{finitepbgdt}
\pi'^*(\mathrm{GC}_f(j_!\mathscr F))=\mathrm{GC}_{f'}(\pi'^*j_!\mathscr F)\ \ \ \text{and}\ \ \ \pi'^*(\mathrm{GLC}_f(j_!\mathscr F))=\mathrm{GLC}_{f'}(\pi'^*j_!\mathscr F).
\end{equation}
\item[(ii)]
Let $\overline k$ be an algebraic closure of $k$, $\overline S$ an irreducible component of $S\otimes_{k}\overline k$ and $\pi:\overline S\to S$ the composition of $\overline S\to S\otimes_{k}\overline k$ and $S\otimes_{k}\overline k\to S$. We denote by
$\overline f:\overline X\rightarrow \overline S$ (resp. by $\overline\pi:\overline X\rightarrow X$) the base change of $f:X\rightarrow S$ (resp. $\pi:\overline S\rightarrow S$). We have 
\begin{equation}\label{algclosedpbgdt}
\overline\pi^*(\mathrm{GC}_f(j_!\mathscr F))=\mathrm{GC}_{\overline f}(\overline\pi^*j_!\mathscr F)\ \ \ \text{and}\ \ \ \overline\pi^*(\mathrm{GLC}_f(j_!\mathscr F))=\mathrm{GLC}_{\overline f}(\overline\pi^*j_!\mathscr F).
\end{equation}
\item[(iii)]
Let $g:X'\to X$ be an \'etale morphism such that $X'_{\eta}=X'\times_S\eta\neq \emptyset$. Then, we have 
\begin{equation}
g^*(\rGC_f(j_!\mathscr F))=\rGC_{fg}(j_!\mathscr F|_{X'})\ \ \ \textrm{and}\ \ \ g^*(\rGLC_f(j_!\mathscr F))=\rGLC_{fg}(j_!\mathscr F|_{X'}).
\end{equation}
\end{itemize}
\end{lemma}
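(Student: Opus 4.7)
The plan is to derive all three parts from the uniqueness characterization of the generic (logarithmic) conductor divisor together with Lemma~\ref{bcofclc}. Before anything else, I would verify in each case that the right-hand side is well defined, i.e.\ that the hypotheses of \ref{nothigherdim} are preserved: the new base is irreducible by assumption; the structural morphism is separated smooth of finite type by stability of these properties under base change and \'etale localization; the pulled-back divisor is again a relative Cartier divisor; and its irreducible components, being connected components of a smooth scheme over the new base, are smooth over that base. Once this is settled, the defining uniqueness property of $\rGC$ and $\rGLC$ reduces each equality to an equality of (logarithmic) conductor divisors on the generic fiber of the new base.

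For (i), since $\pi$ is dominant and generically finite, the generic point $\eta'$ of $S'$ is the unique point of $S'$ above $\eta$ and $k(\eta')/k(\eta)$ is a finite, hence algebraic, field extension. The generic fiber $X'_{\eta'}$ equals $X_{\eta}\times_{k(\eta)}k(\eta')$, and the restriction of $\pi'^{*}(\rGC_{f}(j_{!}\mathscr F))$ to $X'_{\eta'}$ is the base change of $\rC_{X_{\eta}}(j_{!}\mathscr F|_{X_{\eta}})$ along this field extension. Applying Lemma~\ref{bcofclc} with base field $k(\eta)$ in place of $k$, this base change equals $\rC_{X'_{\eta'}}(\pi'^{*}j_{!}\mathscr F|_{X'_{\eta'}})$, as required; the argument for $\rGLC$ is identical. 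For (ii), I would proceed analogously: the residue field $k(\overline\eta)$ of the generic point of $\overline S$ is algebraic over $k(\eta)$ because $\overline k\otimes_{k}k(\eta)$ is integral over $k(\eta)$, so the generic fiber $\overline X_{\overline\eta}$ is again an algebraic base change of $X_{\eta}$ and Lemma~\ref{bcofclc} applies verbatim.

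For (iii), the \'etale morphism $g$ restricts to an \'etale morphism $g_{\eta}\colon X'_{\eta}\to X_{\eta}$. For each irreducible component $E'$ of $(g^{-1}D)_{\eta}$, its image under $g_{\eta}$ lies in a unique irreducible component $E$ of $D_{\eta}$, and $g_{\eta}$ induces an isomorphism of strict Henselizations at the corresponding generic points. The associated complete discrete valuation fields are therefore isomorphic, the $\mathrm{Gal}$-representations attached to $j_{!}\mathscr F|_{X'}$ at $E'$ and to $j_{!}\mathscr F$ at $E$ correspond under this isomorphism, and the conductor and logarithmic conductor at $E'$ thus coincide with those at $E$. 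This yields the required equality on the generic fiber.

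The main nuisance I anticipate is the bookkeeping when the fixed decomposition $D=\sum_{i\in I}D_{i}$ on $X$ splits into a finer decomposition on $X'$ or $\overline X$, so that several components of the pulled-back divisor sit over a single $D_{i}$; once this refinement is made, the ramification-theoretic content of the lemma is entirely captured by Lemma~\ref{bcofclc} for parts (i) and (ii), and by the \'etale invariance of the complete discrete valuation field at a generic point of a divisor for part (iii).
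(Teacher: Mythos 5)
Your proposal is correct and takes essentially the same route as the paper: the paper's proof of (i) and (ii) is the one-line remark that they "follow from the definition of the (logarithmic) conductor divisor and Lemma~\ref{bcofclc}," and of (iii) that it follows from $g_{\eta}$ being \'etale and the conductor divisor being an \'etale local invariant. You have simply spelled out these observations in full detail, including the reduction via the uniqueness characterization of $\rGC$ and $\rGLC$ to an identity on the generic fiber, the fact that $k(\eta')/k(\eta)$ and $k(\overline\eta)/k(\eta)$ are algebraic extensions so that Lemma~\ref{bcofclc} applies, and the strict-Henselization argument for (iii); no gaps.
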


Lemma \ref{bcofgcglc} (i) and (ii) follow from the definition of the (logarithmic) conductor divisor and Lemma \ref{bcofclc}. Lemma \ref{bcofgcglc}(iii) follows from the fact that $g_{\eta}:X'_{\eta}\to X_{\eta}$ is \'etale and the (logarithmic) conductor divisor is an \'etale local invariant.

\begin{proposition}\label{Ssmoothoveralgclosed}
We take the notation and assumptions in subsection \ref{nothigherdim}.
To prove Theorem \ref{intromaintheorem}, it is sufficient to consider the case where $k$ is algebraically closed and $S$ is a connected and smooth $k$-scheme.
\end{proposition}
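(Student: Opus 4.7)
The plan is to perform two successive reductions, in both of which the key input is the base change compatibility of the generic (logarithmic) conductor divisor provided by Lemma \ref{bcofgcglc}, together with the stability of the ordering from \ref{bigger} under faithfully flat pullback of Cartier divisors with rational coefficients.

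\emph{Step 1: reduction to algebraically closed $k$.} I will fix an algebraic closure $\overline k$ of $k$, choose an irreducible component $\overline S$ of $S\otimes_k\overline k$, and let $\overline\pi\colon\overline S\to S$ be the induced morphism. Factoring $\overline\pi$ as the closed immersion into $S\otimes_k\overline k$ followed by the base change of the integral morphism $\mathrm{Spec}\,\overline k\to\mathrm{Spec}\,k$ shows that $\overline\pi$ is integral, hence universally closed, and that $\overline\pi^{-1}(\eta)$ consists only of the generic point of $\overline S$. Assuming Theorem \ref{intromaintheorem} for $\overline f\colon\overline X\to\overline S$ yields an open dense $\overline V\subset\overline S$; I will take $V=S\setminus\overline\pi(\overline S\setminus\overline V)$, which is open by universal closedness and dense because $\overline\pi^{-1}(\eta)\subset\overline V$. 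For any $s\in S$ and any $\overline s\in\overline\pi^{-1}(s)$, the morphism $X_{\overline s}\to X_s$ is faithfully flat, so the (in)equality at $\overline s$ descends to $s$ by combining Lemma \ref{bcofgcglc}(ii) with Lemma \ref{bcofclc}.

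\emph{Step 2: reduction to $S$ connected smooth.} After replacing $k$ by $\overline k$ and $S$ by $S_{\mathrm{red}}$, I may assume $k$ is algebraically closed and $S$ is integral. By de Jong's alteration theorem I can choose a proper surjective generically finite morphism $\pi\colon S'\to S$ with $S'$ an integral smooth $k$-scheme. The base change $f'\colon X'\to S'$ remains smooth, each $D_i\times_S S'$ is smooth over $S'$, and its connected components are therefore irreducible relative Cartier divisors smooth over $S'$, so the hypotheses of \ref{nothigherdim} are preserved for the pulled-back data. Granting the theorem on $S'$ gives an open dense $V'\subset S'$, and I will take $V=S\setminus\pi(S'\setminus V')$, which is open by properness of $\pi$ and dense because the generic point of $S'$ lies in $V'$ and maps to $\eta$. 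The descent of the (in)equalities from points of $S'$ above $s$ to $s$ itself proceeds exactly as in Step 1, now using Lemma \ref{bcofgcglc}(i) in place of (ii). The main subtlety to verify in both steps is density of the open set $V$ so produced, which in each case reduces to the observation that the preimage of $\eta$ under the cover lies inside the open dense set furnished by the hypothetical theorem upstairs; once this is in place, the rest is routine faithfully flat descent of divisor inequalities.
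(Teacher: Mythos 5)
The overall strategy here matches the paper's: exploit the base change compatibility of the generic (logarithmic) conductor divisor (Lemma \ref{bcofgcglc}) and of the fibrewise conductor divisor (Lemma \ref{bcofclc}), and chain together a sequence of reductions using a closedness/density argument to control where the equality holds. However, you perform the reductions in the opposite order from the paper, and the order matters here. The paper first reduces to $S$ integral (passing to $S_{\mathrm{red}}$ after covering by affines), then applies de Jong's alteration to make $S$ smooth, and only then extends scalars to $\overline k$ and takes an irreducible component. After de Jong, $S'$ is smooth over $k$, so $X'$ and each $D_i'$ are smooth over $k$, hence regular and normal; consequently the connected components of each $D_i'$ are irreducible, pairwise disjoint, and open-closed in a relative Cartier divisor — so they are themselves irreducible relative Cartier divisors smooth over $S'$, and the setup of \ref{nothigherdim} genuinely holds for the upstairs data at every stage.

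Your Step 1, by contrast, passes to $\overline k$ \emph{before} smoothifying. At that point $\overline S$ is merely integral (not normal), so $\overline X$ and $D_i \times_S \overline S$ need not be normal, and the irreducible components of $D_i \times_S \overline S$ — while Weil divisors — need not be Cartier divisors of $\overline X$ relative to $\overline S$. (Smoothness over an irreducible but non-normal base does not make the total space normal, and a connected scheme smooth over such a base can be reducible, e.g.\ a connected double \'etale cover of a nodal curve.) So the sentence ``Assuming Theorem \ref{intromaintheorem} for $\overline f\colon\overline X\to\overline S$'' assumes the theorem in a situation where the hypotheses of subsection \ref{nothigherdim} may fail, which is not licit. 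Your Step 2 \emph{does} check this hypothesis, but only because $S'$ there is smooth over $\overline k$; Step 1 has no analogous verification. The cleanest fix is to adopt the paper's order — pass to $S_{\mathrm{red}}$, take a smooth alteration $S'\to S_{\mathrm{red}}$, and then base change $S'$ to $\overline k$ — or, equivalently, collapse your two steps into a single morphism $S''\to S$ (de Jong of an irreducible component of $S_{\mathrm{red}}\otimes_k\overline k$, composed with the integral projection to $S$), which is universally closed, surjective, has smooth source over $\overline k$, and satisfies the density property you need. Apart from this ordering issue, your uses of universal closedness/properness to produce the open dense $V$, and of faithfully flat base change to descend the (in)equalities of Cartier divisors with rational coefficients, are exactly the points the paper makes via its conditions (1)–(2) on $\theta\colon S'\to S$; you omit the auxiliary affine cover step, but that step is inessential for this proposition.
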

\begin{proof}
Let $k'$ be an algebraic extension of $k$, let $S'$ be an irreducible separable $k'$-scheme of finite type and let $\theta:S'\to S$ be a $k$-morphism satisfying
\begin{itemize}\itemsep=0.2cm
\item[(1)]
$\theta:S'\to S$ is surjective, and, for any open dense subset $V'\subset S'$, the image $\theta(V')$ contains an open dense subset of $S$;
\item[(2)]
For each $i\in I$, all connected components of $D'_i=D_i\times_SS'$ are smooth over $S'$.
\end{itemize}
We denote by $f':X'\to S'$ and $\theta':X'\to X$ the base changes of $f:X\to S$ and $\theta:S'\to S$. Let $s'$ be a point of $S'$ and $s=\theta(s')\in S$. We denote by $\rho'_{s'}:X'_{s'}=X'\times_{S'}s'\to X'$ the canonical injection and by $\iota_{s'}:X'_{s'}\to X_s$ the canonical projection.

Assume that Theorem \ref{intromaintheorem}(i) is valid for the morphism $f':X'\to S'$ and the sheaf $\theta'^*j_!\sF$. Then, there exists an open dense subset $V'\subset S'$ such that
\begin{itemize}\itemsep=0.2cm
\item{} for any $s'\in V'$, we have $\rho'^*_{s'}(\rGC_{f'}(\theta'^*j_!\sF))=\rC_{X'_{s'}}(\rho'^*_{s'}\theta'^*j_!\sF)$;
\item{} for any $s'\in S'-V'$, we have $\rho'^*_{s'}(\rGC_{f'}(\theta'^*j_!\sF))\geq\rC_{X'_{s'}}(\rho'^*_{s'}\theta'^*j_!\sF)$.
\end{itemize}
Let $V$ be an open dense subset of $S$ contained in $\theta(V')$. Then, for any point $s\in V$ and for a point $s'\in V'$ above $s\in V$ such that $k(s')/k(s)$ is algebraic, we have 
\begin{align}\label{iotarhoGC=iotaC}
\iota_{s'}^*\rho_s^*(\rGC_f(j_!\sF))&=\rho'^*_{s'}\theta'^*(\rGC_f(j_!\sF))=\rho'^*_{s'}(\rGC_{f'}(\theta'^*j_!\sF))\\
&=\rC_{X'_{s'}}(\rho'^*_{s'}\theta'^*j_!\sF)=\rC_{X'_{s'}}(\iota^*_{s'}\rho^*_sj_!\sF)=\iota^*_{s'}(\rC_{X_{s}}(\rho^*_sj_!\sF)),\nonumber
\end{align}
where the second equality follows from Lemma \ref{bcofgcglc} and the fifth equality follows from Lemma \ref{bcofclc}. Since $X_s$ is a smooth $k(s)$-scheme and $X'_{s'}=X_s\otimes_{k(s)}k(s')$, the equality \eqref{iotarhoGC=iotaC} of two Cartier divisors with rational coefficients on $X'_{s'}$ implies that $\rho_s^*(\rGC_f(j_!\sF))=\rho_s^*(\rGC_f(j_!\sF))$. Similarly, for any $s\in S-V$ and for a point $s'\in S'$ above $s\in S-V$ such that $k(s')/k(s)$ is algebraic, we have 
\begin{align}\label{iotarhoGC>iotaC}
\iota_{s'}^*\rho_s^*(\rGC_f(j_!\sF))&=\rho'^*_{s'}\theta'^*(\rGC_f(j_!\sF))=\rho'^*_{s'}(\rGC_{f'}(\theta'^*j_!\sF))\\
&\geq\rC_{X'_{s'}}(\rho'^*_{s'}\theta'^*j_!\sF)=\rC_{X'_{s'}}(\iota^*_{s'}\rho^*_sj_!\sF)=\iota^*_{s'}(\rC_{X_{s}}(\rho^*_sj_!\sF)), \nonumber
\end{align}
which implies $\rho_s^*(\rGC_f(j_!\sF))\geq\rho_s^*(\rGC_f(j_!\sF))$. Hence, Theorem \ref{intromaintheorem}(i) is valid for the morphism $f:X\to S$ and the sheaf $j_!\sF$. 

Let $\{S_\alpha\}_{1\leq \alpha\leq m}$ be an open affine cover of $S$. Note that, to prove Theorem \ref{intromaintheorem}(i), it is sufficient to verify it for the morphisms $f_\alpha:X\times_SS_\alpha\to S_\alpha$ and the sheaves $(j_!\mathscr F)|_{X\times_SS_\alpha}$ for each $1\leq \alpha\leq m$. Hence, we firstly reduced to the case that $S$ is an irreducible and affine $k$-scheme of finite type. Using the argument in the previous paragraph, we secondly replace $S$ by $S_{\mathrm{red}}$, and thirdly reduced to the case that $S$ is an irreducible and smooth $k$-scheme by de Jong's alteration \cite[4.1]{dej}. Let $\overline k$ be an algebraic closure of $k$, $S'$ an irreducible component. Finally, we can replace $S$ by $S'$ and reduce Theorem \ref{intromaintheorem}(i) to the case where $S$ is connected and smooth over an algebraically closed field.

The argument above is also valid for the logarithmic version.
\end{proof}

\subsection{\cite[1.2]{bei}}
Let $X$ be a smooth $k$-scheme and $C$ a closed conical subset in $\bT^*X$.   Let $f\colon Y\rightarrow X$ be a morphism of smooth $k$-schemes and $\ol y\rightarrow Y$ a geometric point above a point $y$ of $Y$. We say that $f\colon Y\rightarrow X$ is $C$-{\it transversal at} $y$ if $\ker(df_{\ol y})\bigcap (C\times_X\ol y)\subseteq\{0\}\subseteq \bT^*_{f(\ol y)}X$, where $df_{\ol y}\colon \bT^*_{f(\ol y)}X\rightarrow\bT^*_{\ol y}Y$ is the canonical map. We say that $f\colon Y\rightarrow X$ is $C$-{\it transversal}  if it is $C$-transversal at every point of $Y$. If $f\colon Y\rightarrow X$ is $C$-transversal, we define $f^\circ C$ to be the scheme theoretic image of $Y\times_XC$ in $\bT^*Y$ by the canonical map $df\colon  Y\times_X\bT^*X\rightarrow \bT^*Y$. Notice that $df\colon Y\times_XC\rightarrow f^\circ C$ is finite and that $f^\circ C$ is also a closed conical subset of $\bT^*Y$.
Let $g\colon X\rightarrow Z$ be a morphism of smooth $k$-schemes, $x$ a point of $X$, and $\ol x\rightarrow X$ a geometric point above $x$. We say that $g\colon X\rightarrow Z$ is $C$-transversal at $x$ if $dg_{\ol x}^{-1} (C\times_X\ol x)\subseteq\{0\}\subset \bT^*_{g(\ol x)}Z$, where $dg_{\ol x}\colon \bT^*_{g(\ol x)}Z\rightarrow \bT^*_{\ol x}X$ is the canonical map. We say that $g\colon X\rightarrow Z$ is $C$-{\it transversal} if it is $C$-transversal at every point of $X$. Let $(g,f)\colon Z\leftarrow Y\rightarrow X$ be a pair of morphisms of smooth $k$-schemes. We say that $(g,f)$ is $C$-{\it transversal} if $f\colon Y\rightarrow X$ is $C$-transversal and $g\colon Y\rightarrow Z$ is $f^\circ C$-transversal.

\subsection{\cite[1.3]{bei}}\label{NotationSS}
Let $X$ be a smooth $k$-scheme and $\sK$ an object of $D^b_c(X,\Lambda)$. We say that $\sK$ is {\it micro-supported} on a closed conical subset $C$ of $\bT^*X$ if, for any $C$-transversal pair of morphisms $(g,f)\colon Z\leftarrow Y\rightarrow X$ of smooth $k$-schemes, the morphism $g\colon Y\rightarrow Z$ is locally acyclic with respect to $f^*\sK$. If there exists a smallest closed conical subset of $\bT^*X$ on which $\sK$ is micro-supported, we call it the {\it singular support} of $\sK$ and denote it by $SS(\sK)$. We often endow $SS(\sK)$ with a reduced induced closed subscheme structure.

\begin{theorem}[{\cite[Theorem 1.3]{bei}}]\label{BeiSS}
Let $X$ be a smooth $k$-scheme and $\sK$ an object of $D^b_c(X,\Lambda)$. The singular support $SS(\sK)$ exists. Moreover, each irreducible component of $SS(\sK)$ has dimension $\dim_kX$ if $X$ is equidimensional.
\end{theorem}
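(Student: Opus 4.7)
The plan is to follow Beilinson's Radon-transform strategy, since that is the approach of the cited reference. The statement breaks into two independent parts: existence of a smallest closed conical subset $SS(\sK)$ on which $\sK$ is micro-supported, and the dimension bound that every irreducible component of $SS(\sK)$ has dimension $\dim_k X$ when $X$ is equidimensional.

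For existence, the first move is to construct a candidate directly. Locally on $X$, define $C(\sK) \subseteq \bT^*X$ as the closure of the set of covectors $(x, df(x))$ for which the vanishing cycles $\phi_f(\sK)_{\bar x}$ are non-zero, as $f$ ranges over local functions near $x$. Constructibility of $\sK$ forces $C(\sK)$ to be closed, and linearity in $df$ makes it conical. Minimality then comes essentially for free: if $C$ is any closed conical micro-supporting subset and $(x, df(x)) \notin C$, then $f$ defines a $C$-transversal morphism, local acyclicity forces $\phi_f(\sK)_{\bar x} = 0$, so $C \supseteq C(\sK)$. The real content is to show that $\sK$ is itself micro-supported on $C(\sK)$: local acyclicity must hold for \emph{all} $C(\sK)$-transversal pairs $(g,f) \colon Z \leftarrow Y \to X$, not only for the single-function tests used to build $C(\sK)$.

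This upgrade is the heart of the matter. Beilinson's method is to embed $X$ into an affine space $\bA^N$ and pass to the dual projective space $\check{\mathbb{P}}^N$ parametrizing hyperplanes, via the Radon transform $R\sK$. Local-acyclicity conditions on $X$ translate into constructibility conditions for $R\sK$ along the universal incidence correspondence, and the equivalence between the single-function vanishing-cycles criterion and the general transversal-pair criterion can be read off from this picture. Once that equivalence is in hand, the dimension bound $\dim SS(\sK) \leq \dim X$ follows from constructibility of $R\sK$ together with a dimension count along the incidence correspondence, while the lower bound on each irreducible component is supplied by the zero section over each irreducible stratum of a constructibility stratification of $\sK$, which already saturates the expected dimension.

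The main obstacle, and the genuinely hard part of Beilinson's paper, is establishing this Radon-transform equivalence between single-function vanishing-cycles and local acyclicity for arbitrary transversal pairs. All the transversality formalism in subsection \ref{NotationSS} is designed precisely so that such an equivalence can hold, but proving it requires a careful analysis of nearby cycles under blow-ups of the incidence variety and an induction on dimension via generic linear projections. I would expect the bulk of the work to concentrate there; by comparison, the construction of $C(\sK)$, the minimality check, and the final dimension count are formal consequences of the equivalence and of standard constructibility properties.
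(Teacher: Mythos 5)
The paper does not prove \cref{BeiSS}. It is Beilinson's theorem, stated here verbatim with the citation \cite[Theorem 1.3]{bei}, and the article uses it as a black box — the singular support enters only through the $SS$-transversality conditions appearing in Proposition~\ref{transversalcurve} and Theorem~\ref{Ccutbycurve}(2). There is therefore no proof in this paper to compare your sketch against; what you have written is an outline of the cited reference, not a reconstruction of anything the authors do.

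Taken on its own terms, your outline identifies the right engine (the Radon transform along the universal hyperplane incidence correspondence) and correctly locates the hard step (upgrading from single-function vanishing-cycle tests to local acyclicity for arbitrary $C$-transversal pairs $(g,f)$). But it remains a plan rather than a proof: you explicitly defer the Radon-transform equivalence, which is precisely the content of Beilinson's paper, so nothing new is established. One organizational remark: defining a candidate $C(\sK)$ as the closure of the locus of covectors $(x,df(x))$ with $\phi_f(\sK)_{\bar x}\neq 0$ and then verifying it is a micro-support reverses the logic of the cited source, where the singular support is taken \emph{a priori} as the intersection of all closed conical subsets on which $\sK$ is micro-supported, and the vanishing-cycle description is a consequence rather than the definition (it is closer to how Saito's subsequent characteristic-cycle paper packages things). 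Either organization can work, but the genuinely non-formal step is the same in both: showing that an abstractly defined candidate is actually a micro-support in the sense of subsection~\ref{NotationSS}, and your sketch acknowledges but does not close that gap. For the purposes of this paper, none of this matters, since the authors simply invoke the result.
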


\subsection{}
In the following of this section, we assume that $k$ is perfect. 
%Let $X$ be a smooth $k$-scheme, let $D$ be a reduced Cartier divisor of $X$, let $U$ be the complement of $D$ in $X$ and let $j\colon U\to X$ be the canonical injection. We denote by $\{D_i\}_{1\leq i\leq m}$ the set of irreducible components of $D$. Let $\sF$ be a locally constant and constructible \'etale sheaf of $\Lambda$-modules on $U$.

\begin{theorem}[{cf. \cite{wr, H23}}]\label{Ccutbycurve}
Let $X$ be a smooth $k$-scheme, $D$ a reduced Cartier divisor of $X$,  $U$ the complement of $D$ in $X$ and  $j\colon U\to X$ the canonical injection. Let $\sF$ be a locally constant and constructible \'etale sheaf of $\Lambda$-modules on $U$. Let $f\colon Y\to X$ be a morphism of smooth $k$-schemes. We assume that $f^*D=D\times_XY$ is a Cartier divisor of $Y$. 
\begin{itemize}\itemsep=0.2cm
\item[(1)] Then, we have {\rm(\cite[Theorem 1.5]{H23})}
\begin{equation*}
f^*(\rC_X(j_!\sF))\geq \rC_Y(f^*j_!\sF).
\end{equation*}
\item[(2)]
Assume that $Y$ is a smooth $k$-curve, that $f\colon Y\to X$ is an immersion such that $D$ is smooth at the closed point $x=Y\bigcap D$ and that the ramification of $\sF$ at $x$ is non-degenerate. If $f\colon Y\to X$ is $SS(j_!\sF)$-transversal at $x$, then \rm{(\cite[Proposition 3.8, Corollary 3.9]{wr})}
$$f^*(\rC_X(j_!\sF))= \rC_Y(f^*j_!\sF).$$
\end{itemize}
\end{theorem}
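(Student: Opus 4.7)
The plan is to reduce each assertion to a ramification-theoretic computation at the generic point of each irreducible component of $f^*D$, and then to invoke the cited works. By construction of the conductor divisor in \ref{YEW}, both $f^*(\rC_X(j_!\sF))$ and $\rC_Y(f^*j_!\sF)$ are Cartier divisors with rational coefficients supported on $f^*D$. It is therefore enough to compare multiplicities at the generic point $\zeta$ of each irreducible component $E$ of $f^*D$.

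For part (1), fix such an $E$, let $D_i$ be the unique irreducible component of $D$ such that $f(E)\subseteq D_i$, and set $e=m_{\zeta}(f^*D_i)$. The inequality of Cartier divisors translates into the local assertion
\begin{equation*}
e\cdot \rc_{K_i}(\sF|_{\eta_i})\;\geq\;\rc_{K_E}\bigl((f^*\sF)|_{\eta_E}\bigr)
\end{equation*}
for the complete discrete valuation ring extension induced by $f$ at $\zeta$, where $K_E$ is the analogue of $K_i$ for the component $E$ of $f^*D$. This is exactly the content of \cite[Theorem 1.5]{H23}. Summing over all generic points of $f^*D$ yields the desired inequality.

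For part (2), the hypotheses that $f\colon Y\to X$ is an immersion of a smooth curve and that $D$ is smooth at $x=Y\cap D$ force $f^*D=\{x\}$ with multiplicity one, so the comparison reduces to the single closed point $x$. The plan is then to combine the two local hypotheses: the $SS(j_!\sF)$-transversality of $f$ at $x$ controls the conormal directions along which the sheaf is allowed to ramify, while the non-degeneracy of the ramification of $\sF$ at $x$ provides a well-defined characteristic form on $\bT^*_xX$ whose vanishing locus is contained in $SS(j_!\sF)\times_X x$. Together these ensure that the restriction of this characteristic form to $\ker(df_{\ol x})$ is still nonzero, which is exactly the input required to apply \cite[Proposition 3.8 and Corollary 3.9]{wr}; these then deliver the equality $f^*(\rC_X(j_!\sF))=\rC_Y(f^*j_!\sF)$.

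The main obstacle I anticipate is part (2), specifically the bookkeeping needed to match the intrinsic $SS$-transversality condition used here with the characteristic-form transversality employed in \cite{wr}. This matching relies on Beilinson's existence theorem (\ref{BeiSS}) together with Saito's construction attaching a characteristic form at $x$ to the non-degenerate ramification of $\sF$, so that the $SS(j_!\sF)$-transversality of $f$ at $x$ translates into the hypothesis used in \emph{loc.\ cit.} Part (1) is essentially a direct invocation of \cite[Theorem 1.5]{H23}; once the reductions to components of $f^*D$ are in place, no further ramification-theoretic work is needed.
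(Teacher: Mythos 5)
The paper offers no independent proof of Theorem~\ref{Ccutbycurve}: both parts are imported verbatim from the cited references, so your overall strategy of reducing to local statements and then invoking \cite[Theorem 1.5]{H23} for (1) and \cite[Proposition 3.8, Corollary 3.9]{wr} for (2) is in line with the paper. For (1) your reduction to generic points of components of $f^*D$ is harmless but not actually needed, since \cite[Theorem 1.5]{H23} is already phrased as the divisor inequality itself.

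There is, however, a genuine slip in your treatment of (2). You assert that \emph{the immersion hypothesis together with smoothness of $D$ at $x$} already forces $f^*D=\{x\}$ with multiplicity one; this is false as stated, because a smooth curve immersed in $X$ can be tangent to a smooth divisor, producing multiplicity $\ge 2$. The multiplicity-one conclusion is correct, but it comes from the $SS(j_!\sF)$-transversality hypothesis: $SS(j_!\sF)$ contains the conormal direction $\bT^*_DX$ (since $j_!\sF$ is not lisse across $D$), so if $Y$ were tangent to $D$ at $x$ then $\ker(df_{\bar x})$ would contain that conormal line, contradicting transversality. In particular this point is essential: the non-log conductor satisfies $\rc\ge 1$, so if $m_x(f^*D)\ge 2$ and $\sF$ is tame along $D$ the asserted equality $f^*(\rC_X(j_!\sF))=\rC_Y(f^*j_!\sF)$ would fail outright. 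You should therefore either derive transversality from $SS$-transversality before reducing to $x$, or note explicitly that this implication is part of what makes \cite[Corollary 3.9]{wr} applicable.
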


\begin{theorem}[{\cite[Theorem 1.6]{H23}}]\label{LCcutbycurve}
Let $X$ be a smooth $k$-scheme, $D$ a divisor with simple normal crossings of $X$,  $U$ the complement of $D$ in $X$ and  $j\colon U\to X$ the canonical injection. Let $\sF$ be a locally constant and constructible \'etale sheaf of $\Lambda$-modules on $U$. Let $f\colon Y\to X$ be a morphism of smooth $k$-schemes. We assume that $f^*D=D\times_XY$ is a Cartier divisor of $Y$. 

\begin{itemize}\itemsep=0.2cm
\item[(1)]
Then, we have
\begin{equation}\label{fLC>LCf}
f^*(\rLC_X(j_!\sF))\geq \rLC_Y(f^*j_!\sF).
\end{equation}
\item[(2)]
We further assume that $D$ is irreducible. Let $\mathcal I(X, D)$ be the set of triples $(S,h\colon S\to X,x)$ where $h\colon S\to X$ is an immersion from a smooth $k$-curve $S$ to $X$ such that $x=S\bigcap D$ is a closed point of $X$. Then, we have
\begin{equation}\label{equality lc sup}
\rlc_D(j_!\sF)=\sup_{\mathcal I(X, D)}\frac{\rlc_x(h^*j_!\sF)}{m_x(h^*D)}.
\end{equation}
\end{itemize}
\end{theorem}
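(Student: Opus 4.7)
The plan is to prove (1) using Proposition~\ref{logext} coefficient-by-coefficient on $f^*D$, to deduce the inequality $\geq$ of (2) directly from (1), and to obtain the reverse inequality in (2) by constructing a transversal curve that realizes the supremum. For part (1), both divisors are supported on $f^*D$, so it suffices to compare coefficients at each generic point $\xi$ of an irreducible component $E_\xi$ of $f^*D$. For each component $D_i$ of $D$ with $E_\xi\subseteq f^{-1}(D_i)$, the map of strict henselizations induces an extension $K_\xi/K_i$ of complete discrete valuation fields of ramification index $e_{\xi,i}=m_\xi(f^*D_i)$, and Proposition~\ref{logext} gives the inclusion $G^{e_{\xi,i}r}_{K_\xi,\log}\subseteq G^r_{K_i,\log}$, hence $\rlc_\xi(f^*\sF)\leq \sum_i e_{\xi,i}\cdot\rlc_{\eta_i}(\sF)$, where the sum is over all $i$ with $E_\xi\subseteq f^{-1}(D_i)$. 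This is precisely the required inequality of coefficients at $E_\xi$, proving \eqref{fLC>LCf}.

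For part (2), the inequality $\rlc_D(j_!\sF)\geq\sup_{\mathcal{I}(X,D)}\rlc_x(h^*j_!\sF)/m_x(h^*D)$ is immediate from \eqref{fLC>LCf} applied to each immersion $h\colon S\to X$ in $\mathcal{I}(X,D)$. For the opposite inequality, the strategy is to exhibit a curve realizing the bound. Using Proposition~\ref{transversalcurve}, pick a closed point $x$ in the smooth locus of $D$ and a smooth curve $h\colon S\to X$ meeting $D$ transversally at $x$, so that $m_x(h^*D)=1$ and the induced extension $K_S/K_D$ has ramification index one. For a sufficiently generic such curve---one whose cotangent direction at $x$ avoids a suitable degeneracy locus in $\bT^*_xX$ associated with the logarithmic ramification of $\sF$ at the generic point of $D$---the equality case of Proposition~\ref{logext} for tame extensions, combined with the non-degenerate pullback of the highest log-slope piece $M_{\log}^{(\rlc_D(j_!\sF))}$, yields $\rlc_x(h^*j_!\sF)=\rlc_D(j_!\sF)$.

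The main obstacle is the construction in part (2) of a curve preserving the highest logarithmic slope. Transversality itself is easy by Proposition~\ref{transversalcurve}, but ensuring that the highest-slope piece survives restriction to $S$ requires microlocal input: by Lemma~\ref{Hasse-Arf-conductor} only finitely many log-slopes appear in the logarithmic slope decomposition of $\sF|_{\eta_D}$, and the degenerate cotangent directions at $x$ for the highest slope form a proper closed conical subset of $\bT^*_xX$, cut out using Beilinson's singular support (Theorem~\ref{BeiSS}) applied to the highest log-slope subsheaf. A Zariski-generic transversal curve avoids this subset and realizes the supremum, completing the equality.
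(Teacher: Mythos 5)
This theorem is not proved in the paper at all; it is cited verbatim from \cite[Theorem 1.6]{H23}. So there is no in-paper proof to compare against. Reviewing your argument on its own merits, I see two genuine gaps.

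\textbf{Gap in part (1).} Your reduction relies on ``the map of strict henselizations induces an extension $K_\xi/K_i$ of complete discrete valuation fields of ramification index $e_{\xi,i}=m_\xi(f^*D_i)$.'' This is false in general. A generic point $\xi$ of an irreducible component of $f^*D$ need not map to the generic point $\eta_i$ of $D_i$; indeed if $f$ is a non-dominant morphism (e.g.\ the immersion of a curve into a surface meeting $D$ at a closed point), then $f(\xi)$ is a point of higher codimension, the local ring $\sO_{X,f(\xi)}$ is not a discrete valuation ring, and there is no extension $K_\xi/K_i$ of complete discrete valuation fields to which Proposition~\ref{logext} applies. The whole difficulty of Theorem~\ref{LCcutbycurve}(1) is precisely that one must compare ramification invariants attached to local fields at \emph{different} points of $X$, which Proposition~\ref{logext} alone cannot do. Handling this requires the cutting-by-curves machinery of \cite{H19,H23} (or logarithmic singular support/characteristic cycle techniques), not a direct DVR comparison.

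\textbf{Gap in part (2).} You claim a generic transversal curve (with $m_x(h^*D)=1$) realizes equality $\rlc_x(h^*j_!\sF)=\rlc_D(j_!\sF)$. This is incorrect: the logarithmic conductor is \emph{not} in general computed by a transversal curve, because $k(\eta_D)$ is not perfect when $\dim D\geq 1$, so $\rc_{\eta_D}\neq\rlc_{\eta_D}+1$ and the Saito-type transversality statement (Theorem~\ref{Ccutbycurve}(2)) only governs the non-logarithmic conductor. The right-hand side of \eqref{equality lc sup} is a genuine supremum which need not be attained, and it is approached by curves with $m_x(h^*D)\to\infty$; compare the proof of Proposition~\ref{lcdprop1} in this very paper, where one passes to a tame cover of degree $\beta$, produces a transversal curve upstairs computing the ordinary conductor, and then lets $\beta\to\infty$ to control the log conductor up to an $O(1/\beta)$ error. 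Your argument also invokes the ``singular support of the highest log-slope subsheaf,'' but the log-slope decomposition lives at the level of Galois representations over the completion at $\eta_D$ and does not spread out to a constructible subsheaf on $X$, so Theorem~\ref{BeiSS} cannot be applied to it as stated.

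In short, (1) cannot be obtained from Proposition~\ref{logext} alone, and the proof of the $\leq$ direction of (2) must be a limiting argument via tame covers rather than a single transversal curve.
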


%\subsection{}\label{notsctd}
%Let $S$ be an irreducible $k$-scheme, $f:X\rightarrow S$ a smooth $k$-morphism, $\{D_i\}_{i\in I}$ a set of effective Cartier divisors on $X$ relative to $S$, $D$ the sum of all $D_i$ $(i\in I)$, $U$ the complement of $D$ in $X$ and $j:U\rightarrow X$ the canonical injection. For every $i\in I$, we assume that $D_i$ is irreducible and that $f|_{D_i}:D_i\rightarrow S$ is smooth. For any point $t\in S$, we denote  by $\rho_t:X_t\rightarrow X$ the canonical injection. Let $\mathscr F$ be a locally constant and constructible sheaf of $\Lambda$-modules on $U$.

%\begin{theorem}
%Under the notation and assumptions of \ref{notsctd}, there exists an open dense subset $V\subseteq S$ such that:
%\begin{itemize}
%\item[(1)]
%for any point $s\in V$, we have $\rho_s^*(\mathrm{GDT}_f(j_!\mathscr F))=\mathrm{DT}_{X_{s}}(j_!\mathscr F|_{X_{s}})$;
%\item[(2)]
%for any point $t\in S-V$, we have $\rho_t^*(\mathrm{GDT}_f(j_!\mathscr F))\geq\mathrm{DT}_{X_{t}}(j_!\mathscr F|_{X_{t}})$.
%\end{itemize}
%\end{theorem}

\section{Proof of the main theorem}\label{sccd}
\subsection{} In this section, we aim to prove Theorem \ref{intromaintheorem}. We adapt a strategy similar to proofs of \cite[Theorem 4.3]{HY17} and  \cite[Theorem 1.11]{H19}.
We take the notation and assumptions of subsection \ref{nothigherdim}. We further assume that $k$ is algebraically closed and $S$ is a connected and smooth $k$-scheme from Proposition \ref{cdprop1} to subsection \ref{finalproofCandLCtheorem}, by Proposition \ref{Ssmoothoveralgclosed}. 

\begin{proposition}\label{cdprop1}
For each closed point $s\in S$, we have
\begin{equation}\label{cdprop1formula}
\rho_s^*(\mathrm{GC}_f(j_!\mathscr F))\geq\mathrm{C}_{X_{s}}(j_!\mathscr F|_{X_{s}}).
\end{equation}
\end{proposition}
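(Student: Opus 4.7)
My plan is to verify the inequality \eqref{cdprop1formula} coefficient by coefficient along each irreducible component of $D_s$, using an auxiliary relative curve to interpolate between the generic and special fibers and invoking the curve-case semi-continuity from \cite[Theorem 7.2]{H23}. First I would reduce to the case where each $D_{i,s}$ is geometrically irreducible by passing to an \'etale neighborhood of $s$ via Lemma \ref{bcofgcglc}(iii) and Proposition \ref{genetosp1}. For an irreducible component $Z$ of $D_s$ and the set $J_Z=\{j\in I:Z\subseteq D_{j,s}\}$, the coefficient of $Z$ on the left-hand side of \eqref{cdprop1formula} is $\sum_{j\in J_Z}\rc_{D_{j,\eta}}(j_!\mathscr F|_{X_\eta})$, while the coefficient on the right is $\rc_Z(j_!\mathscr F|_{X_s})$; so it suffices to verify this as a numerical inequality for each $Z$.

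Next I would select a sufficiently general closed point $x\in Z$ that avoids $D_{j,s}$ for $j\notin J_Z$ and lies in the non-degenerate ramification locus of $j_!\mathscr F|_{X_s}$, and construct by Proposition \ref{transversalcurve} a smooth curve $C\subset X_s$ through $x$ transversal to $Z$ and $SS(j_!\mathscr F|_{X_s})$-transversal at $x$. Theorem \ref{Ccutbycurve}(2) then identifies $\rc_x(j_!\mathscr F|_C)=\rc_Z(j_!\mathscr F|_{X_s})$, transferring the right-hand coefficient to a single point on a curve.

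The central step is to extend $C$ to a smooth relative curve $h\colon\mathcal C\hookrightarrow X$ over a Zariski open neighborhood $V$ of $s$, arranged so that $\mathcal C_s$ contains $C$ near $x$ and, for each $j\in J_Z$, the intersection $\mathcal C\cap D_j$ contains a section $\sigma_j\colon V\to X$ with $\sigma_j(s)=x$. Sections of $D_j\to S$ through $x$ are furnished by Proposition \ref{goodsection} applied to the smooth $S$-schemes $D_j$, and $\mathcal C$ is then cut out near these sections by lifting a regular system of parameters for $C\subset X_s$ to $X$. Given such $\mathcal C$, applying \cite[Theorem 7.2]{H23} to $\mathcal C\to V$ yields
\begin{equation*}
\sum_{j\in J_Z}\rc_{\sigma_j(\eta)}(h^*j_!\mathscr F|_{\mathcal C_\eta})\geq\rc_x(h^*j_!\mathscr F|_{\mathcal C_s})=\rc_x(j_!\mathscr F|_C),
\end{equation*}
exploiting the fact that all the sections $\sigma_j$ collide at $x$ upon specialization to $s$, while Theorem \ref{Ccutbycurve}(1) applied to $\mathcal C_\eta\hookrightarrow X_\eta$ at each $\sigma_j(\eta)$ gives $\rc_{D_{j,\eta}}(j_!\mathscr F|_{X_\eta})\geq\rc_{\sigma_j(\eta)}(h^*j_!\mathscr F|_{\mathcal C_\eta})$. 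Chaining these two bounds with the identification from the previous paragraph produces $\sum_{j\in J_Z}\rc_{D_{j,\eta}}(j_!\mathscr F|_{X_\eta})\geq\rc_Z(j_!\mathscr F|_{X_s})$, which is exactly the coefficient comparison needed.

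The main obstacle I expect is the construction of the relative curve $\mathcal C$ together with the sections $\sigma_j$ through $x$ that simultaneously satisfy the transversality requirements on both the generic and special fibers; this is subtle when $|J_Z|>1$, since the divisors $D_j$ for $j\in J_Z$ must collapse onto the single component $Z$ over the fiber at $s$ while remaining distinct over the generic fiber, and the regularity of the lifted parameters must be maintained across the family.
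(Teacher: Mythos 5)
Your overall strategy coincides with the paper's: reduce to a single irreducible component of $D_s$, pick a non-degenerate $k$-rational point $z$ and a transversal curve $C \subset X_s$ via Proposition~\ref{transversalcurve}, identify $\rc_z(j_!\mathscr F|_C)$ with the special-fiber coefficient via Theorem~\ref{Ccutbycurve}(2), deform $C$ to a relative curve, apply \cite[Theorem~7.2]{H23} for the curve-fibration semi-continuity, and bound by Theorem~\ref{Ccutbycurve}(1) on the generic fiber. That chain of inequalities is exactly \eqref{cdprop1_f1}--\eqref{cdprop1_f3} in the paper.

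The gap is precisely the one you flag at the end and then leave unresolved: the construction of the relative curve with the required sections. The paper does \emph{not} build $\mathcal C$ and sections $\sigma_j$ of $D_j\to S$ directly in $X$ (and Proposition~\ref{goodsection} does not produce sections of $D_j\to S$; it produces sections of $\mathbb A^n_S\to S$ missing a given proper closed subscheme). Instead the paper lifts a regular system of parameters of $\mathscr O_{X_s,z}$ to $\mathscr O_{X,z}$ and uses them to define a smooth $S$-morphism $g:X\to\mathbb A^{n-1}_S$ of relative dimension~$1$ with $g|_{D_i}$ \'etale and $C=g^{-1}(s\times O)$; the relative curve is then a pull-back of $g$ along a section of $\mathbb A^{n-1}_S\to S$. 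The collision problem you identify (the $D_j$, $j\in J_Z$, must separate generically) is then handled in three steps: Zariski's Main Theorem (\cite[IV,~18.12.1]{EGA4}, cited here as Proposition~\ref{zarmain}) gives an \'etale neighborhood $W$ of $s\times O$ over which each $D_{i}$ contributes a finite flat piece $E_{W,i}$ with $E_{W,i}\to W$ an isomorphism; the locus $P_W=\bigcup_{i\neq i'}E_{W,i}\cap E_{W,i'}$ where these ``sections'' meet has codimension~$1$, and its closure $Z\subset\mathbb A^{n-1}_S$ is the bad locus; Proposition~\ref{goodsection} then produces a section $\sigma:V\to\mathbb A^{n-1}_S$ of $\pi$ through $s\times O$ whose generic point avoids $Z$, so that after pulling back, the $E_{i,\overline\eta}$ are pairwise disjoint, giving exactly the decomposition $E_{\overline\eta}=\coprod_i E_{i,\overline\eta}$ needed to apply \cite[Theorem~7.2]{H23}. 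Your proposal is missing all of: the map $g$ to $\mathbb A^{n-1}_S$, the \'etale localization that makes the sections isomorphisms over $W$, and the identification of the bad locus $Z$ together with the application of Proposition~\ref{goodsection} to miss it. Without these, the assertion that sections $\sigma_j$ through $x$ exist and stay disjoint over a dense open of $S$ is unsupported, and that is the heart of the proof.
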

\begin{proof}
We fix a closed point $s\in S$. This is a local statement for the Zariski topology of $X$. After shrinking $X$, we may assume that $X$ is affine and irreducible, that $(D_{s})_{\mathrm{red}}$ has a unique irreducible component and that, for each $i\in I$, $D_s\subseteq D_i$. Notice that, for each $i\in I$, we have  $(D_{i})_s\xrightarrow{\sim} (D_s)_{\mathrm{red}}$. We put  $n=\dim_k X-\dim_k S$.

When $n=1$, i.e., $f:X\to S$ is a smooth relative curve, the inequality \eqref{cdprop1formula} is due to \cite[Theorem 7.2]{H23}.
We consider the case where $n\geq 2$ in the following. Let $z\in D_s$ be a closed point such that $(D_s)_{\mathrm{red}}$ is smooth at $z$ and that the ramification of $(j_!\mathscr F)|_{X_s}$ along $(D_{s})_{\mathrm{red}}$ is non-degenerate at $z$. After replacing $X$ by an open neighborhood of $z$, we can find a smooth $k$-curve $C$ and a closed immersion $\iota:C\rightarrow X_s$ such that
the curve $C$ intersects $(D_{s})_{\mathrm{red}}$ transversally at $z$ and that the immersion $\iota:C\rightarrow X_s$ is $SS(j_!\mathscr F|_{X_s})$-transversal at $z$ (Proposition \ref{transversalcurve}).
By Theorem \ref{Ccutbycurve}(2), we have 
\begin{equation}\label{cdprop1_f1}
\rc_z(j_!\mathscr F|_C)=\rc_{(D_{s})_{\mathrm{red}}}((j_!\mathscr F)|_{X_s}).
\end{equation}
Choose a regular system of parameters $\bar t_1,\cdots, \bar t_n$ of $\mathscr O_{X_s,z}$, such that $(\bar t_2,\cdots,\bar t_{n})$ is the kernel of $\iota^\sharp:\mathscr O_{X_s,z}\rightarrow \mathscr O_{C,z}$  and that $(\bar t_1)$ is the kernel of $\mathscr O_{X_s,z}\rightarrow \mathscr O_{(D_{s})_{\mathrm{red}},z}$. For each $2\leq r\leq n$, choose a lifting $t_r\in \mathscr O_{X,z}$ of $\bar t_r\in \mathscr O_{X_s,z}$. We define an
$\mathscr O_{S,s}$-homomorphism $ g_z:\mathscr O_{S,s}[T_2,\cdots,T_{n}]\rightarrow \mathscr O_{X, z}$ by
\begin{equation*}
g_z:\mathscr O_{S,s}[T_2,\cdots ,T_{n}]\rightarrow \mathscr O_{X, z},\ \ \ T_r\mapsto t_r.
\end{equation*}
After replacing $X$ by a Zariski neighborhood of $z$, the map $g_z$ induces an $S$-morphism $g:X\rightarrow  \mathbb A^{n-1}_S$. It satisfies the following conditions after shrinking $X$ further
\begin{itemize}\itemsep=0.2cm
\item[(i)]
it is smooth and of relative dimension $1$;
\item[(ii)]
the restriction $g|_D:D\to\mathbb A^{n-1}_S$ is quasi-finite and flat and, for each $i\in I$, the restriction $g|_{D_i}:D_i\rightarrow \mathbb A^{n-1}_S$ is \'etale (\cite[I, Chapitre 0, 15.1.16]{EGA4});
\item[(iii)]
the curve $C$ is the pre-image $g^{-1}(s\times O)$, where $s\times O$ denotes the product of $s\in S$ and the origin $O\in\mathbb A^{n-1}_k$.
\end{itemize}
Notice that $z$ is an isolated point of $C\bigcap D$. By \cite[IV, 18.12.1]{EGA4}, we have a connected \'etale neighborhood $\gamma:W\to \mathbb A^{n-1}_S$ of $s\times O\in\mathbb A^{n-1}_S$ such that 
\begin{itemize}\itemsep=0.2cm
\item[(1)]
the pre-image $w=\gamma^{-1}(s\times O)$ is a point;
\item[(2)]
the fiber product $D_{W}=D\times_{\bA^{n-1}_S}W$ is a disjoint union of two schemes $E_{W}$ and $H_{W}$ such that the canonical maps $E_{W}\to W$ is finite and flat, and $z'=g_W^{-1}(w)\bigcap E_W$ is a single point which is the pre-image of $z\in D$ in $D_W$, where $g_W:X_W=X\times_{\bA^{n-1}_S}W\to W$ is the base change of $g:X\to \bA^{n-1}_S$ by $\gamma:W\to \bA^{n-1}_S$;
\item[(3)]
for each $i\in I$, the fiber product $D_{W,i}=D_i\times_{\bA^{n-1}_S}W$ is a disjoint union of $E_{W,i}=D_{W,i}\bigcap E_W$ and $H_{W,i}=D_{W,i}\bigcap H_W$, such that $E_{W,i}\to W$ is an isomorphism.
\end{itemize}
Put $P_W=\bigcup_{i\neq i' (i,i'\in I)} E_{W,i}\cap E_{W,i'}$. Since $z'\in P_W$, $P_W\subset E_W$ has codimension $1$ and $E_W\to W$ is finite and flat, the image $g_W(P_W)$ is a codimensional $1$ closed subset of $W$ containing $w$. Since $\gamma:W\to\mathbb A^{n-1}_S$ is \'etale, the closure $Z=\overline{\gamma(g_W(P_W))}$ contains $s\times O$ and has codimension $1$ in $\mathbb A^{n-1}_S$. By Proposition \ref{goodsection}, we have an open dense subscheme $V$ of $S$ and a morphism $\sigma:V\rightarrow \mathbb A^{n-1}_S$ such that the composition of $\sigma:V\to \bA^{n-1}_S$ and $\pi:\mathbb A^{n-1}_S\rightarrow S$ is the canonical injection, that $S\not\subset Z$ and that $\sigma(s)=s\times O$. Since $S$ is irreducible, $\sigma^{-1}(Z)\subset V$ is a closed subset of $S$ which does not contain the generic point. Since $s\times O\in\gamma(W)$, the fiber product $W\times_{\mathbb A^{n-1}_S,\sigma}V$ is non-empty. Let $T$ be the connected component of $W\times_{\mathbb A^{n-1}_S,\sigma}V$ containing $w$. Let $\eta$ be the generic point of $S$ and $\bar\eta$ an algebraic geometric point above $\eta$ that factors through $T $. We have the following commutative diagram:
\begin{equation*}
\xymatrix{\relax
D\ar[d]&E_W\ar[l]\ar[d]\ar@{}|-(0.5){\Box}[rd]&E\ar[l]\ar[d]\ar@{}|-(0.5){\Box}[rd]&E_{\overline\eta}\ar[l]\ar[d]\\
X\ar[d]_-(0.5)g\ar@{}|-(0.5){\Box}[rd]&X_W\ar[l]\ar[d]^-(0.5){g_W}\ar@{}|-(0.5){\Box}[rd]&Y\ar[l]\ar[d]^-(0.5){h}\ar@{}|-(0.5){\Box}[rd]&Y_{\overline\eta}\ar[l]\ar[d]\\
\mathbb A^{n-1}_S&W\ar[l]_-(0.5)\gamma&T\ar[ld]^-(0.5){\gamma'}\ar[l]_-(0.5){\sigma'}&\overline\eta\ar[l]\\
&V\ar[lu]^-(0.5)\sigma&&}
\end{equation*}
We put $E_{i,\overline\eta}=E_{W,i}\times_W\overline\eta$. Since $E_{W,i}\to W$ is an isomorphism for each $i\in I$, and $\eta\notin \sigma^{-1}(Z)$, we obtain that each $E_{i,\overline\eta}$ is isomorphic to $\overline\eta$ and that $E_{\overline\eta}=\coprod_{i\in I} E_{i,\overline\eta}$.
Applying \cite[Theorem 7.2]{H23} to the sheaf $(j_!\mathscr F)|_Y$ and the relative curve $h:Y\to T$, we get 
\begin{equation}\label{cdprop1_f2}
\sum_{i\in I}\rc_{E_{i,\overline\eta}}((j_!\mathscr F)|_{Y_{\overline\eta}})\geq\rc_z((j_!\mathscr F)|_C).
\end{equation}
Applying Theorem \ref{Ccutbycurve} to the morphism $r:Y_{\overline\eta}\to X_{\overline\eta}=X\times_S\overline\eta$ and the sheaf $(j_!\mathscr F)|_{X_{\overline\eta}}$, we obtain that, for each $i\in I$, 
\begin{equation}\label{cdprop1_f3}
\rc_{D_{i,\eta}}((j_!\mathscr F)|_{X_\eta})\geq \rc_{E_{i,\overline\eta}}((j_!\mathscr F)|_{Y_{\overline\eta}}).
\end{equation}
Combining \eqref{cdprop1_f1}, \eqref{cdprop1_f2}, \eqref{cdprop1_f3}, we obtain that 
\begin{equation}
\sum_{i\in I}\rc_{D_{i,\eta}}((j_!\mathscr F)|_{X_\eta})\geq \rc_{(D_{s})_{\mathrm{red}}}(j_!\mathscr F|_{X_s})
\end{equation}
By the assumptions that each $f|_{D_i}:D_i\to S$ is smooth and that $D_s$ is irreducible, we see that $\sum_{i\in I}\rc_{D_{i,\eta}}((j_!\mathscr F)|_{X_\eta})$ is the coefficient of $\rho^*_s(\rGC_f(j_!\sF))$ and $\rc_{(D_{s})_{\mathrm{red}}}(j_!\mathscr F|_{X_s})$ is the coefficient of $\rC_{X_s}((j_!\mathscr F)|_{X_s})$. Hence, we get the inequality \eqref{cdprop1formula}.
\end{proof}

\begin{remark}
The proof of Proposition \ref{cdprop1} is a mimic of \cite[Proposition 8.2]{HY17}. However, the section $\sigma:V\to \mathbb A^{n-1}_S$ is chosen to be the zero section of $\pi:\mathbb A^{n-1}_S\to S$ in the proof of {\it loc. cit.}. In proof of Proposition \ref{cdprop1}, we fix the flaw.
\end{remark}

\begin{proposition}\label{cdprop2}
There exists an open dense subset $V\subseteq S$ such that, for any point $t\in V$, we have
\begin{equation*}
\rho_t^*(\mathrm{GC}_f(j_!\mathscr F))\leq \mathrm{C}_{X_t}((j_!\mathscr F)|_{X_t}).
\end{equation*}
\end{proposition}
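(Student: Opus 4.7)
The plan is to reduce the statement to the relative curve case (the case $n=1$ of Theorem~\ref{intromaintheorem}(i), established in \cite[Theorem 7.2]{H23}) and combine it with Proposition~\ref{cdprop1} to upgrade the inequality to an equality on a dense open.

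First I would use the base change Lemmas~\ref{bcofclc} and~\ref{bcofgcglc}, together with Proposition~\ref{genetosp1}, to reduce to the situation where each $D_{i,t}$ is geometrically irreducible for $t$ in some open dense subset $V_0\subseteq S$; Proposition~\ref{genetosp2} further allows us to assume that the fibers $D_{i,t}$ and $D_{j,t}$ share no common irreducible component for $i\neq j$ and $t\in V_0$. Under these reductions, the coefficient of $\rho_t^*(\rGC_f(j_!\sF))$ along $D_{i,t}$ equals $\rc_{D_{i,\bar\eta}}(j_!\sF|_{X_{\bar\eta}})$, while the coefficient of $\rC_{X_t}(j_!\sF|_{X_t})$ along $D_{i,t}$ is $\rc_{D_{i,t}}(j_!\sF|_{X_t})$; so the desired inequality reduces to
\begin{equation*}
\rc_{D_{i,\bar\eta}}(j_!\sF|_{X_{\bar\eta}})\leq\rc_{D_{i,t}}(j_!\sF|_{X_t})
\end{equation*}
for each $i\in I$ and $t$ in a suitable open dense of $S$.

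For each $i$, since the ramification of $j_!\sF|_{X_{\bar\eta}}$ along $D_{i,\bar\eta}$ is non-degenerate on a dense open of $D_{i,\bar\eta}$, I can choose a closed point $\bar z_i\in D_{i,\bar\eta}$ where non-degeneracy holds and $D_{\bar\eta}$ is smooth at $\bar z_i$. Applying Proposition~\ref{transversalcurve} yields a smooth curve $C^{(i)}_{\bar\eta}\hookrightarrow X_{\bar\eta}$ meeting $D_{\bar\eta}$ transversally at $\bar z_i$ (and nowhere else after shrinking $X$), which is $SS(j_!\sF|_{X_{\bar\eta}})$-transversal at $\bar z_i$. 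Theorem~\ref{Ccutbycurve}(2) then gives
\begin{equation*}
\rc_{\bar z_i}(j_!\sF|_{C^{(i)}_{\bar\eta}})=\rc_{D_{i,\bar\eta}}(j_!\sF|_{X_{\bar\eta}}).
\end{equation*}
The crucial geometric step is to spread $C^{(i)}_{\bar\eta}$ out, using Propositions~\ref{zarmain} and~\ref{goodsection} in the spirit of the construction in the proof of Proposition~\ref{cdprop1}, to a smooth relative curve $C^{(i)}\to V_i$ (after possibly replacing $S$ with an \'etale cover and shrinking to an open dense $V_i$), embedded in $X\times_S V_i$, with $C^{(i)}\cap D$ a section of $D_i\to V_i$. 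After further shrinking $V_i$, the fiberwise transversality of $C^{(i)}_t$ to $D_{i,t}$ and $SS$-transversality of $C^{(i)}_t\hookrightarrow X_t$ at $C^{(i)}_t\cap D_i$ persist for every $t\in V_i$, since both conditions are open in $t$.

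Finally, I would apply \cite[Theorem 7.2]{H23} to each relative curve $C^{(i)}\to V_i$ equipped with $j_!\sF|_{C^{(i)}}$: this produces an open dense $V'_i\subseteq V_i$ on which the generic conductor of the relative curve pulls back to the fiberwise conductor, so that
\begin{equation*}
\rc_{C^{(i)}_t\cap D_i}(j_!\sF|_{C^{(i)}_t})=\rc_{\bar z_i}(j_!\sF|_{C^{(i)}_{\bar\eta}})
\end{equation*}
for $t\in V'_i$. Theorem~\ref{Ccutbycurve}(2) applied fiberwise to $C^{(i)}_t\hookrightarrow X_t$ (valid on $V'_i$ by the open conditions above) gives
\begin{equation*}
\rc_{C^{(i)}_t\cap D_i}(j_!\sF|_{C^{(i)}_t})=\rc_{D_{i,t}}(j_!\sF|_{X_t}),
\end{equation*}
and chaining these equalities yields $\rc_{D_{i,\bar\eta}}(j_!\sF|_{X_{\bar\eta}})=\rc_{D_{i,t}}(j_!\sF|_{X_t})$. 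Taking $V=\bigcap_{i\in I}V'_i$ (a finite intersection) concludes the proof. The main obstacle I anticipate is ensuring that the spread-out curve $C^{(i)}\to V_i$ carries the required fiberwise $SS$-transversality, since the singular support is only defined fiber by fiber; one must argue that the non-degenerate ramification condition at $\bar z_i$ survives in the family, which should follow from Beilinson's theorem~\ref{BeiSS} combined with the relative-curve construction already deployed in the proof of Proposition~\ref{cdprop1}.
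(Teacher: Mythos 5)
Your proposal follows the paper's overall strategy (the same reductions via \cref{genetosp1}, \cref{genetosp2} and \cref{bcofclc}, \cref{bcofgcglc}; cutting by a curve at the generic fiber using \cref{transversalcurve} and \cref{Ccutbycurve}(2); spreading out to a relative curve and invoking \cite[Theorem~7.2]{H23}), but it has a genuine gap in the final step, which you yourself flag but then optimistically dismiss. To close your chain of equalities you apply \cref{Ccutbycurve}(2) \emph{at the special fibers} $C^{(i)}_t\hookrightarrow X_t$, which requires $C^{(i)}_t$ to be $SS(j_!\sF|_{X_t})$-transversal and the ramification of $\sF|_{X_t}$ along $D_{i,t}$ to be non-degenerate at the intersection point. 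Neither of these is an open condition in $t$: the singular support $SS(j_!\sF|_{X_t})$ is defined fiber by fiber, can jump as $t$ varies, and there is no compatibility of singular supports with base change that would guarantee your generic transversality propagates to nearby fibers. Beilinson's theorem (\cref{BeiSS}) gives existence and dimension of $SS$ on each fiber separately; it says nothing about how these vary in a family. So the assertion ``both conditions are open in $t$'' is unjustified and in fact false in general.

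The fix is that you are trying to prove more than you need. Proposition~\ref{cdprop2} only asks for the inequality $\rho_t^*(\mathrm{GC}_f(j_!\sF))\leq \mathrm{C}_{X_t}((j_!\sF)|_{X_t})$, so at the special fibers you should use \cref{Ccutbycurve}\emph{(1)}, the inequality version, which holds for any morphism of smooth $k$-schemes pulling $D$ back to a Cartier divisor and requires \emph{no} transversality or non-degeneracy hypothesis. Concretely, after obtaining $\rc_{\bar z_i}(j_!\sF|_{C^{(i)}_{\bar\eta}})=\rc_{C^{(i)}_t\cap D_i}(j_!\sF|_{C^{(i)}_t})$ from \cite[Theorem~7.2]{H23} on the relative curve, you conclude with
\begin{equation*}
\rc_{C^{(i)}_t\cap D_i}(j_!\sF|_{C^{(i)}_t})\leq \rc_{D_{i,t}}(j_!\sF|_{X_t}),
\end{equation*}
valid unconditionally by \cref{Ccutbycurve}(1). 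This gives the needed $\leq$ on a dense open, and combined with \cref{cdprop1} one later upgrades it to the equality on a (possibly smaller) dense open; the equality is \emph{not} something to establish inside this proposition. With that one substitution your argument becomes essentially the paper's proof.
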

\begin{proof}
This is a Zariski local problem at the generic point of $S$. We may assume that, for each point $s\in S$ and any indices $i,\iota\in I$ ($i\neq \iota$), the fibers $D_{i,s}$ and $D_{\iota,s}$ have distinct irreducible components (Proposition \ref{genetosp2}). Hence the proposition can be reduced to the case where $D$ is irreducible.  
 By Proposition \ref{genetosp1}, there exists a connected smooth $k$-scheme $S'$ and an \'etale map $\gamma:S'\rightarrow S$ such that $D'=D\times_SS'$ is the disjoint union of its irreducible components and that every irreducible component of $D'$ has geometrically irreducible fibers at each point of $S'$. By Lemma \ref{bcofclc}, Lemma \ref{bcofgcglc}(i) and the fact that $\gamma:S'\rightarrow S$ is an open mapping, we can reduce to the case where $S$ is a connected, affine and smooth $k$-scheme and $D_s$ is geometrically irreducible for any $s\in S$. We are left to show that there exists an open neighborhood $V\subset S$ of $\eta$ such that, for any $t\in V$, we have 
 \begin{equation}\label{genericsmallconductor}
 \rc_{D_{\eta}}((j_!\sF)|_{X_\eta})\leq  \rc_{D_{t}}((j_!\sF)|_{X_t}). 
 \end{equation}
 Let $T$ be a connected and smooth $k$-scheme and $\beta:T\ra S$ a flat and generically finite morphism. Notice that, to verify the proposition, it is enough to verify it after base-changing by $\beta:T\to S$.

 We put $n=\dim_k X-\dim_k S$.  When $n=1$, the proposition is due to \cite[Theorem 7.2]{H23}. We consider the case where $n\geq 2$ in the following. Let $\bar\eta\rightarrow S$ be an algebraic geometric point above $\eta$, $\bar z\in D_{\bar\eta}$ a closed point such that $D_{\bar\eta}$ is smooth at $\bar z$ and that the ramification of $\mathscr F|_{U_{\bar\eta}}$ along $D_{\bar\eta}$ is non-degenerate at $\bar z$. Since $S$ can be replaced by a flat and generically finite cover $T$ as above, we may assume that $\overline z$ can be descended to a $k(\eta)$-rational point $z\in D_{\eta}$. Since the function field $k(\eta)$ of $S$ has infinitely many elements, after shrinking $X$, we can find a smooth $k(\eta)$-curve $C$ and a closed immersion $\iota:C\ra X_{\eta}$ such that the curve $C$ intersects $D_{\eta}$ transversally at $z$ and that the base change $\iota_{\overline\eta}:C_{\overline\eta}\rightarrow X_{\overline\eta}$ of $\iota:C\ra X_{\eta}$ is $SS(j_!\mathscr F|_{X_{\overline\eta}})$-transversal at $\overline z$ (proposition \ref{transversalcurve}). By Theorem \ref{Ccutbycurve}(2), we have 
 \begin{equation}\label{genericsmallconductor1}
 \rc_{D_{\eta}}((j_!\mathscr F)|_{X_{\eta}})=\rc_{\overline z}((j_!\mathscr F)|_{C_{\overline\eta}}).
 \end{equation}

Choose a regular system of parameters $ t_1,\cdots, t_n$ of $\mathscr O_{X_{\eta},z}$ such that $(t_2,\cdots, t_{n})$ is the kernel of $\mathscr O_{X_{\eta},z}\rightarrow \mathscr O_{C,z}$
and that $( t_1)$ is the kernel of $ \mathscr O_{X,z}\rightarrow \mathscr O_{D,z}$. We define a $k(\eta)$-morphism $g_{\eta}:k(\eta)[T_2,\cdots T_{n}]\rightarrow \mathscr O_{X_{\eta},z}$ by
\begin{equation*}
g_{\eta}:k(\eta)[T_2,\cdots T_{n}]\rightarrow \mathscr O_{X,z},\ \ \  T_i\mapsto t_i.
\end{equation*}
After shrinking $X$ by a Zariski neighborhood of $z$ again, the map $g_{\eta}$ induces an $S$-morphism $g:X\rightarrow \mathbb A^{n-1}_S$ which satisfies the following conditions
\begin{itemize}\itemsep=0.2cm
\item[(i)]
it is smooth and of relative dimension $1$;
\item[(ii)]
the restriction $g|_D:D\rightarrow \mathbb A^{n-1}_S$ is \'etale;
\item[(iii)]
the curve $C$ is the pre-image $g^{-1}(\eta\times O)$, where $\eta\times O$ denotes the fiber product of $\eta\in S$ and the origin $O\in\mathbb A^{n-1}_k$.
\end{itemize}
Let $\sigma:S\rightarrow \mathbb A^{n-1}_S$ be the zero-section of the canonical projection $\pi:\mathbb A^{n-1}_S\rightarrow S$. We denote by $h:Y\rightarrow S$ the base change of $g:X\rightarrow \mathbb A^{n-1}_S$ by $\sigma:S\rightarrow \mathbb A^{n-1}_S$ and we put $E=Y\times_XD$. Since $h|_E:E\rightarrow S$ is \'etale, there exists a connected \'etale neighborhood $\gamma:S'\ra S$ of the geometric point $\bar\eta\rightarrow S$ such that $E'=E\times_SS'$ is a disjoint union of $E'_1$ and $E'_2$ where $E'_1$ is isomorphic to $S'$ and $E'_1$ contains the pre-image of $z\in E_{\eta}$. We have the following commutative diagram
\begin{equation}
\xymatrix{\relax
D\ar[d]\ar@{}|-(0.5)\Box[rd]&E\ar[d]\ar[l]&E'_1\ar[d]\ar[l]\\
X\ar[d]_g\ar@{}|-(0.5)\Box[rd]&Y\ar[d]^h\ar[l]\ar@{}|-(0.5)\Box[rd]&Y'\ar[d]^{h'}\ar[l]\\
\bA^{n-1}_S&S\ar[l]^-(0.5)\sigma&S'\ar[l]^-(0.5)\gamma}
\end{equation}
For any geometric point $\bar t'\rightarrow S'$, we put $Y'_{\bar t'}=Y'\times_{S'}\bar t'$, put $E'_{1,\bar t'}=E'_1\times_{S'} \bar t'$, put $X_{\bar t'}=X\times_S\bar t'$ and put $D_{\bar t'}=D\times_S\bar t'$.  Notice that $h':Y'\to S'$ is a smooth relative curve and that $h'|_{E'_1}:E'_1\to S'$ is isomorphic. By \cite[Theorem 7.2]{H23}, we can find an open dense subset $V'$ of $S'$ such that, for any algebraic geometric point $\overline t'\to V'$, we have 
\begin{equation}\label{genericsmallconductor2}
\rc_{\overline z}((j_!\mathscr F)|_{C_{\overline\eta}})=\rc_{E'_{1,\bar t'}}((j_!\mathscr F)|_{Y'_{\bar t'}}). 
\end{equation}
Applying Theorem \ref{Ccutbycurve}(1) to the immersion $Y'_{\overline t'}\to X_{\overline t'}$ and the sheaf $(j_!\sF)|_{X_{\overline t'}}$ for any algebraic geometric point $\overline t'\to V'$, we obtain 
\begin{equation}\label{genericsmallconductor3}
\rc_{E'_{1,\bar t'}}((j_!\mathscr F)|_{Y'_{\bar t'}})\leq (\rC_{D_{\overline t'}}((j_!\mathscr F)|_{X_{\overline t'}}), Y'_{\bar t'})_{E'_{1,\bar t'}}=\rc_{D_{\overline t'}}((j_!\mathscr F)|_{X_{\overline t'}})=\rc_{D_{t}}((j_!\mathscr F)|_{X_{t}})
\end{equation}
where $t\in S$ denotes the image of $\overline t'\to S'$ in $S$,  and we put $X_t=X\times_S t$ and $D_t=D\times_S t$.
Combining \eqref{genericsmallconductor1}, \eqref{genericsmallconductor2} and \eqref{genericsmallconductor3}, we obtain
\begin{equation}
 \rc_{D_{\eta}}((j_!\mathscr F)|_{X_{\eta}})\leq \rc_{D_{t}}((j_!\mathscr F)|_{X_{t}})
\end{equation}
for any point $t$ in the open dense subset $V=\gamma(V')$ of $S$. We finish the proof of the proposition.
\end{proof}

\begin{proposition}\label{lcdprop1}
For each closed point $s\in S$, we have 
\begin{equation}\label{lcdprop1formula}
\rho_{s}^*(\mathrm{GLC}_{f}(j_!\mathscr F)+D)\geq\mathrm{LC}_{X_{s}}((j_!\mathscr F)|_{X_{s}})+(D_{ s})_{\mathrm{red}}.
\end{equation}
\end{proposition}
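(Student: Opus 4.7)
The strategy mirrors that of Proposition \ref{cdprop1}, with each non-logarithmic tool replaced by its logarithmic counterpart. After shrinking $X$ around $s$, I may assume $X$ is affine and irreducible, that $(D_s)_{\mathrm{red}}$ has a unique irreducible component, and that $D_{i,s} = (D_s)_{\mathrm{red}}$ for every $i\in I$. With $n = \dim_k X - \dim_k S$, this reduces the inequality of Cartier divisors to the single coefficient inequality
\[
\sum_{i\in I}\bigl(\rlc_{D_{i,\eta}}((j_!\mathscr F)|_{X_\eta})+1\bigr)\ \geq\ \rlc_{(D_s)_{\mathrm{red}}}((j_!\mathscr F)|_{X_s})+1.
\]
The case $n=1$ is handled by the logarithmic part of \cite[Theorem 7.2]{H23}, so I may assume $n\geq 2$.

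For $n\geq 2$, fix $\epsilon>0$ and apply the supremum formula of \Cref{LCcutbycurve}(2) to the smooth $k$-scheme $X_s$ and the irreducible divisor $(D_s)_{\mathrm{red}}$. This yields an immersion $\iota:C\to X_s$ from a smooth $k$-curve with $z=C\cap (D_s)_{\mathrm{red}}$ a closed point at which $(D_s)_{\mathrm{red}}$ is smooth, the ramification of $(j_!\mathscr F)|_{X_s}$ is non-degenerate, and
\[
\frac{\rlc_z((j_!\mathscr F)|_C)}{m_z(\iota^*(D_s)_{\mathrm{red}})}\ \geq\ \rlc_{(D_s)_{\mathrm{red}}}((j_!\mathscr F)|_{X_s})-\epsilon.
\]
Writing $m=m_z(\iota^*(D_s)_{\mathrm{red}})$, I then carry out the geometric construction of Proposition \ref{cdprop1} verbatim: I pick a regular system of parameters at $z$ adapted to $C$ and $(D_s)_{\mathrm{red}}$, lift $n-1$ of them to functions on $X$, and obtain a smooth $S$-morphism $g:X\to \mathbb A^{n-1}_S$ of relative dimension $1$ with $g|_{D_i}$ \'etale for each $i$ and $C=g^{-1}(s\times O)$. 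Proposition \ref{zarmain} produces an \'etale neighborhood $\gamma:W\to \mathbb A^{n-1}_S$ of $s\times O$ trivializing the components of $D$, and Proposition \ref{goodsection} yields a section $\sigma:V\to \mathbb A^{n-1}_S$ through the generic point of $S$ with $\sigma(s)=s\times O$, giving a relative curve $h:Y\to T$ as in Proposition \ref{cdprop1}.

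The next step applies the logarithmic analogue of \cite[Theorem 7.2]{H23} to the relative curve $h:Y\to T$ and the sheaf $(j_!\mathscr F)|_Y$ to compare the log conductor along the fiber $C$ with the generic log conductors along the components $E_{i,\bar\eta}$, yielding an inequality of the shape
\[
m\cdot\sum_{i\in I}\bigl(\rlc_{E_{i,\bar\eta}}((j_!\mathscr F)|_{Y_{\bar\eta}})+1\bigr)\ \geq\ \rlc_z((j_!\mathscr F)|_C)+1.
\]
Combining this with \Cref{LCcutbycurve}(1) applied to $Y_{\bar\eta}\to X_{\bar\eta}$, which gives $\rlc_{D_{i,\eta}}\geq \rlc_{E_{i,\bar\eta}}$, dividing by $m$, and taking $\epsilon\to 0$ produces the desired coefficient inequality.

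The main obstacle is the careful bookkeeping of the multiplicity $m$. In the non-logarithmic argument, \Cref{Ccutbycurve}(2) provides an \emph{equality} for a single transversal, non-degenerate curve, so no multiplicities appear. In the logarithmic setting, \Cref{LCcutbycurve}(2) is only an approximate supremum that may force $m>1$, and the relative-curve step must absorb this $m$ correctly. The compatibility with the $+D$ and $+(D_s)_{\mathrm{red}}$ correction terms is governed by Proposition \ref{logext}: under a tamely ramified base change of ramification index $e$, the logarithmic slopes scale by $e$, which is precisely what allows the $m$-factor on the two sides of the displayed inequality above to cancel cleanly. Verifying that the log analogue of \cite[Theorem 7.2]{H23} holds in a form compatible with these multiplicities is the decisive technical input.
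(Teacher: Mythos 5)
Your proposal correctly identifies that Proposition \ref{cdprop1} is the template, and that some device is needed to pass from conductors to log conductors. But the route you choose has a genuine gap, and the paper's actual proof takes a different and cleaner approach precisely to avoid it.

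The gap is at the coordinate-construction step. You invoke the supremum formula \Cref{LCcutbycurve}(2) to produce a curve $C$ with $z = C \cap (D_s)_{\mathrm{red}}$ of multiplicity $m = m_z(\iota^*(D_s)_{\mathrm{red}})$ possibly greater than $1$, and then say you ``pick a regular system of parameters at $z$ adapted to $C$ and $(D_s)_{\mathrm{red}}$'' and ``obtain a smooth $S$-morphism $g:X\to\mathbb A^{n-1}_S$ \dots with $g|_{D_i}$ \'etale.'' This is impossible when $m>1$: if $C$ and $(D_s)_{\mathrm{red}}$ are tangent at $z$, then the equation of $(D_s)_{\mathrm{red}}$ and the $n-1$ equations of $C$ do \emph{not} form a regular system of parameters of $\mathscr O_{X_s,z}$ (their images in $\mathfrak m_z/\mathfrak m_z^2$ are linearly dependent), so $g|_{D_i}$ is at best quasi-finite, not \'etale, at $z$. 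Everything downstream — the applicability of \Cref{zarmain}, the structure of $E_W\to W$, and especially the putative ``logarithmic analogue of \cite[Theorem 7.2]{H23}'' with an $m$-factor, which you do not formulate precisely nor verify — then collapses. You flag the $m$-bookkeeping as ``the decisive technical input,'' but no such input is supplied, and I do not see how to supply it along this route.

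The paper avoids $m>1$ entirely by a tame cover argument. It forms $X'=\Spec(\mathscr O_X[T]/(T^\beta-g_1))$, a degree-$\beta$ tame cover ramified along $D_1$, for $\beta$ coprime to $p$. By \Cref{logext}, log conductors along $D_1$ scale by $\beta$ on $X'$. On $X'$, one can pick a genuinely \emph{transversal}, $SS$-transversal, non-degenerate curve via \cite[Proposition 6.3]{H19}, so \Cref{Ccutbycurve}(2) gives an \emph{equality} of \emph{conductors}, and the entire relative-curve construction of \Cref{cdprop1} and the non-logarithmic \cite[Theorem 7.2]{H23} apply verbatim with $m=1$. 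One then translates back from $\rc$ to $\rlc$ using the $O(1)$ bound $\rlc \le \rc \le \rlc+1$ from \S\ref{introlc+1>c>lc}, incurring an additive error uniformly bounded in $\beta$. Dividing by $\beta$ and letting $\beta\to\infty$ washes out this error and yields \eqref{lcdprop1formula}. The tame cover thus buys transversality (so the non-log machinery, with its exact equalities, is usable) at the cost of a bounded error, which is then killed by a limiting argument — neither of which features in your proposal.
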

\begin{proof}
We fix a closed point $s\in S$. This is a local statement for the Zariski topology of $X$. After shrinking $X$, we may assume that $X$ is affine and irreducible, that $(D_{s})_{\mathrm{red}}\neq\emptyset$ is irreducible and that $D_s\subseteq D_i$ for each $i\in I$. Notice that, for each $i\in I$, we have  $(D_{i})_s\xrightarrow{\sim} (D_s)_{\mathrm{red}}$. 

We put  $n=\dim_k X-\dim_k S$. When $n=1$, i.e., $f:X\to S$ is a smooth relative curve, the inequality \eqref{lcdprop1formula} is due to \cite[Theorem 7.2]{H23}. We focus on the case where $n\geq 2$.  After replacing $X$ by an open and affine neighborhood of the generic point of $X_s$, we may assume that an element $g_1$ of $\Gamma(X,\sO_X)$ defines $D_1$. Let $\beta$ be an integer co-prime to $p$, 
\begin{equation*}
X'=\mathrm{Spec}(\mathscr O_X[T]/(T^\beta-g_1))
\end{equation*}
a tame cover of $X$ of degree $\beta$ ramified along $D_1$, $\pi':X'\to X$ the canonical projection, $f':X'\to S$ the composition of $\pi':X'\to X$ and $f:X\to S$. For any $s\in S$, we denote by $\rho'_s:X'_s\to X'$ the canonical injection, $h'_s:X'_s\to X_s$ the base change of $h':X'\to X$ by $\rho_s:X_s\to X$. Notice that $f':X'\to S$ is a smooth morphism.
We denote by $D'$ the Cartier divisor $(D\times_XX')_{\mathrm{red}}$ on $X'$, by $D_1'$ the smooth divisor $(T)=(D_1\times_XX')_{\mathrm{red}}$ on $X'$ and by $D'_i$ the Cartier divisor $D_i\times_XX'$ for $i
\in I\backslash\{1\}$. We have $D'=\sum_{i\in I} D'_i$ and $\beta\cdot (D'_1)_s=(D'_i)_s$ for $i\in I\backslash \{1\}$. 

Applying \cite[Proposition 6.3]{H19} to the sheaf $\rho'^*_s h'^*j_!\sF$ on $X'_s$ ramified along the divisor $(D'_1)_s$, we can find a closed point $z'$ of $(D'_1)_s$ with the image $z$ in $(D_1)_s$, an immersion $\iota:C\to X'_s$ from a smooth $k$-curve $C$ satisfying
\begin{itemize}\itemsep=0.2cm
\item[(i)]
the curve $C$ and the smooth divisor $(D'_1)_s$ meet transversally at $z'$ in $X'_s$;
\item[(ii)]
the immersion $\iota:C\to X'_s$ is $SS(\rho'^*_sh'^*j_!\sF)$-transversal at $z'$;
\item[(iii)]
the composition of $\iota:C\to X'_s$ and $h'_s:X'_s\to X_s$ is also an immersion.
\end{itemize}
Applying Theorem \ref{Ccutbycurve} to $\iota:C\to X'_s$ and the sheaf $\rho'^*_sh'^*j_!\sF$, we get
\begin{equation}\label{lcdprop1formula1}
\rc_{z'}((j_!\mathscr F)|_C)=\rc_{(D'_1)_s}(\rho'^*_sh'^*j_!\sF).
\end{equation}
Applying Proposition \ref{logext}, we get
\begin{equation}\label{lcdprop1formula2}
\rlc_{(D'_1)_s}(\rho'^*_sh'^*j_!\sF)=\beta\cdot \rlc_{(D_1)_s}(\rho^*_sj_!\sF).
\end{equation}
Combining \eqref{lcdprop1formula1} and \eqref{lcdprop1formula2}, we have 
\begin{equation}\label{lcdprop1formula3}
\rc_{z'}((j_!\mathscr F)|_C)=\rc_{(D'_1)_s}(\rho'^*_sh'^*j_!\sF)\geq \rlc_{(D'_1)_s}(\rho'^*_sh'^*j_!\sF)=\beta\cdot \rlc_{(D_1)_s}(\rho^*_sj_!\sF).
\end{equation}

Choose a regular system of parameters $\bar t_1,\cdots, \bar t_n$ of $\mathscr O_{X'_s,z'}$, such that $(\bar t_2,\cdots,\bar t_{n})$ is the kernel of $\iota^\sharp:\mathscr O_{X'_s,z'}\rightarrow \mathscr O_{C,z'}$  and that $(\bar t_1)$ is the kernel of $\mathscr O_{X'_s,z'}\rightarrow \mathscr O_{(D'_{1})_{s},z'}$. For each $2\leq r\leq n$, choose a lifting $t_r\in \mathscr O_{X',z'}$ of $\bar t_r\in \mathscr O_{X'_s,z'}$. We define an
$\mathscr O_{S,s}$-homomorphism $ \psi_{z'}:\mathscr O_{S,s}[T_2,\cdots,T_{n}]\rightarrow \mathscr O_{X', z'}$ by
\begin{equation*}
\psi_z:\mathscr O_{S,s}[T_2,\cdots ,T_{n}]\rightarrow \mathscr O_{X', z'},\ \ \ T_r\mapsto t_r.
\end{equation*}
After replacing $X'$ by a Zariski neighborhood of $z'$, the map $\psi_{z'}$ induces an $S$-morphism $\psi:X'\rightarrow  \mathbb A^{n-1}_S$. It satisfies the following conditions after shrinking further $X'$
\begin{itemize}\itemsep=0.2cm
\item[(i)]
it is smooth and of relative dimension $1$;
\item[(ii)]
the restriction $g|_D:D\to\mathbb A^{n-1}_S$ is quasi-finite and flat and, for each $i\in I\backslash\{1\}$, the restriction $\psi|_{D'_i}:D'_i\rightarrow \mathbb A^{n-1}_S$ is quasi-finite and  $\gamma|_{D'_1}:D'_1\rightarrow \mathbb A^{n-1}_S$ is \'etale (\cite[I, Chapitre 0, 15.1.16]{EGA4});
\item[(iii)]
the curve $C$ is the pre-image $\psi^{-1}(s\times O)$, where $s\times O$ denotes the product of $s\in S$ and the origin $O\in\mathbb A^{n-1}_k$.
\end{itemize}
Notice that $z$ is an isolated point of $C\bigcap D'\subset X'$. By \cite[IV, 18.12.1]{EGA4}, we have a connected \'etale neighborhood $\gamma:W\to \mathbb A^{n-1}_S$ of $s\times O\in\mathbb A^{n-1}_S$ such that 
\begin{itemize}\itemsep=0.2cm
\item[(1)]
the pre-image $w=\gamma^{-1}(s\times O)$ is a point;
\item[(2)]
the fiber product $D'_{W}=D'\times_{\bA^{n-1}_S}W$ is a disjoint union of two schemes $E'_{W}$ and $H'_{W}$ such that the canonical maps $E'_{W}\to W$ is finite and flat, and $x'=\psi_W^{-1}(w)\bigcap E_W$ is a single point which is the pre-image of $z'\in D'$ in $D'_W$, where $\psi_W:X'_W=X'\times_{\bA^{n-1}_S}W\to W$ is the base change of $\psi:X'\to \bA^{n-1}_S$ by $\gamma:W\to \bA^{n-1}_S$;
\item[(3)]
for each $i\in I\backslash\{1\}$, the fiber product $D'_{W,i}=D'_i\times_{\bA^{n-1}_S}W$ is a disjoint union of $E'_{W,i}=D'_{W,i}\bigcap E'_W$ and $H'_{W,i}=D'_{W,i}\bigcap H'_W$ such that $E'_{W,i}\to W$ is finite and flat over $W$, and the fiber product $D'_{W,1}=D'_1\times_{\bA^{n-1}_S}W$ is a disjoint union of $E'_{W,1}=D'_{W,1}\bigcap E'_W$ and $H'_{W,1}=D'_{W,1}\bigcap H'_W$ such that $E'_{W,1}\to W$ is an isomorphism.
\end{itemize}
Put $P'_W=\bigcup_{i\neq i' (i,i'\in I)} E'_{W,i}\cap E'_{W,i'}$. Since $x'\in P'_W$, $P'_W\subset E'_W$ has codimension $1$ and $E'_W\to W$ is finite and flat, the image $\psi_W(P'_W)$ is a codimensional $1$ closed subset of $W$ containing $w$. Since $\gamma:W\to\mathbb A^{n-1}_S$ is \'etale, the closure $Z=\overline{\gamma(\psi_W(P'_W))}$ contains $s\times O$ and has codimension $1$ in $\mathbb A^{n-1}_S$. By Proposition \ref{goodsection}, we have an open dense subscheme $V$ of $S$ and a morphism $\sigma:V\rightarrow \mathbb A^{n-1}_S$ such that the composition of $\sigma:V\to \bA^{n-1}_S$ and $\pi:\mathbb A^{n-1}_S\rightarrow S$ is the canonical injection, that $S\not\subset Z$ and that $\sigma(s)=s\times O$. Since $S$ is irreducible, $\sigma^{-1}(Z)\subset V$ is a closed subset of $S$ which does not contain the generic point. Since $s\times O\in\gamma(W)$, the fiber product $W\times_{\mathbb A^{n-1}_S,\sigma}V$ is non-empty. Let $T$ be the connected component of $W\times_{\mathbb A^{n-1}_S,\sigma}V$ containing $w$. Let $\eta$ be the generic point of $S$ and $\bar\eta$ an algebraic geometric point above $\eta$ that factors through $T $. We have the following commutative diagram:
\begin{equation*}
\xymatrix{\relax
D\ar[d]&D'\ar[d]\ar[l]&E'_W\ar[l]\ar[d]\ar@{}|-(0.5){\Box}[rd]&E'\ar[l]\ar[d]\ar@{}|-(0.5){\Box}[rd]&E'_{\overline\eta}\ar[l]\ar[d]\\
X\ar[d]_f&X'\ar[l]_{h'}\ar[d]_-(0.5)\psi\ar@{}|-(0.5){\Box}[rd]&X'_W\ar[l]\ar[d]_-(0.5){\psi_W}\ar@{}|-(0.5){\Box}[rd]&Y'\ar[l]\ar[d]_-(0.5){\psi_T}\ar@{}|-(0.5){\Box}[rd]&Y'_{\overline\eta}\ar[l]\ar[d]\\
S&\mathbb A^{n-1}_S\ar[l]_-(0.5)\pi&W\ar[l]_-(0.5)\gamma&T\ar[ld]^-(0.5){\gamma'}\ar[l]_-(0.5){\sigma'}&\overline\eta\ar[l]\\
&&V\ar[lu]^-(0.5)\sigma&&}
\end{equation*}
We put $(E'_i)_{\overline\eta}=E'_{W,i}\times_W\overline\eta$ and put $(H'_i)_{\overline\eta}=H'_{W,i}\times_W\overline\eta$. Since $\eta\notin \sigma^{-1}(Z)$, We have $E'_{\overline\eta}=\coprod_{i\in I}(E'_i)_{\overline\eta}$. Since $E'_{W,i}\to W$ is finite and flat, we have 
\begin{align}
\mathrm{length}_{\overline\eta}((E'_i)_{\overline\eta})=\mathrm{length}_{k}(E'_{W,i}\times_Ww)=m_{z'}(\gamma^*((D'_i)_s))=\beta,
\end{align}
 for any $i\in I\backslash\{1\}$. Since $E'_{W,1}\to W$ is an isomorphism, we have $(E'_1)_{\overline\eta}\cong\overline\eta$. Applying  \cite[Theorem 7.2]{H23} to the relative curve $\psi:Y'\to T$, and the sheaf $(j_!\mathscr F)|_{Y'}$, we get 
 \begin{equation}\label{lcdprop1formula4}
\sum_{i\in I}\sum_{y\in (E'_i)_{\overline\eta}}\rc_y((j_!\mathscr F)|_{Y'_{\overline\eta}})\geq \rc_{z'}((j_!\mathscr F)|_C).
 \end{equation}
 We put $X_{\overline\eta}=X\times_S\overline\eta$, put $D_{\overline\eta}=D\times_S\overline\eta$, put $(D_i)_{\overline\eta}=D_i\times_S\overline\eta$ for $i\in I$, put $X'_{\overline\eta}=X'\times_S\overline\eta$, put $D'_{\overline\eta}=D'\times_S\overline\eta$ and put $(D'_i)_{\overline\eta}=D'_i\times_S\overline\eta$ for $i\in I$. Notice that $(E'_i)_{\overline\eta}\coprod (H'_i)_{\overline\eta}=(D'_i)_{\overline\eta}\times_{X'_{\overline\eta}}Y'_{\overline\eta}$ for $i\in I$. Applying Theorem \ref{Ccutbycurve} to the closed immersion $Y'_{\overline\eta}\to X'_{\overline\eta}$ and the sheaf $(j_!\mathscr F)|_{X'_{\overline\eta}}$ ramified along the divisor $(D'_1)_{\overline\eta}$, we have
 \begin{align}
 \rc_{(D'_1)_{\eta}}((j_!\mathscr F)|_{X'_{\eta}})&\geq  \rc_{(E'_1)_{\eta}}((j_!\mathscr F)|_{Y'_{\eta}}).\label{lcdprop1formula5}
 \end{align}
 Applying Theorem \ref{Ccutbycurve} to the quasi-finite morphism $Y'_{\overline\eta}\to X_{\overline\eta}$ and the sheaf $(j_!\mathscr F)|_{X_{\overline\eta}}$ along the divisor $(D_i)_{\overline\eta}$ $(i\in I\backslash\{1\})$, we have 
 \begin{equation}\label{lcdprop1formula6}
 \beta\cdot\rc_{(D_i)_{\eta}}((j_!\sF)|_{X_{\overline\eta}})\geq \sum_{y\in (E'_i)_{\overline\eta}}\rc_y((j_!\mathscr F)|_{Y'_{\overline\eta}}).
 \end{equation}
 By \eqref{lcdprop1formula4}, \eqref{lcdprop1formula5} and \eqref{lcdprop1formula6}, we have 
 \begin{equation}\label{lcdprop1formula7}
 \rc_{(D'_1)_{\eta}}((j_!\mathscr F)|_{X'_{\eta}})+\beta\sum_{i\in I\backslash\{1\}}\rc_{(D_i)_{\eta}}((j_!\sF)|_{X_{\overline\eta}})\geq \rc_{z'}((j_!\mathscr F)|_C). 
 \end{equation}
By \eqref{lcdprop1formula3} and \eqref{lcdprop1formula7}, we have 
\begin{equation}\label{lcdprop1formula8}
\rc_{(D'_1)_{\eta}}((j_!\mathscr F)|_{X'_{\eta}})+\beta\sum_{i\in I\backslash\{1\}}\rc_{(D_i)_{\eta}}((j_!\sF)|_{X_{\overline\eta}})\geq \beta\cdot \rlc_{(D_1)_s}(\rho^*_sj_!\sF).
\end{equation}
Since $\pi':X'\to X$ is tamely ramified along the divisor $D_i$ with degree $\beta$, we obtain that
\begin{equation}\label{lcdprop1formula9}
\beta\cdot \rlc_{(D_1)_{\eta}}((j_!\mathscr F)|_{X_{\eta}})+1=\rlc_{(D'_1)_{\eta}}((j_!\mathscr F)|_{X'_{\eta}})+1\geq \rc_{(D'_1)_{\eta}}((j_!\mathscr F)|_{X'_{\eta}}).
\end{equation}
By subsection \ref{introlc+1>c>lc}, we have 
\begin{equation}\label{lcdprop1formula10}
\rlc_{(D_i)_{\eta}}((j_!\mathscr F)|_{X_{\eta}})+1\geq \rc_{(D_i)_{\eta}}((j_!\mathscr F)|_{X_{\eta}}),\ \ \textrm{for}\ \ i\in I\backslash\{1\}.
\end{equation}
By \eqref{lcdprop1formula8}, \eqref{lcdprop1formula9} and \eqref{lcdprop1formula10}, we have 
\begin{equation}\label{lcdprop1formula11}
\beta\sum_{i\in I}\rlc_{(D_i)_{\eta}}((j_!\mathscr F)|_{X_{\eta}})+\beta\cdot(\sharp I-1)+1\geq \beta\cdot \rlc_{(D_1)_s}(\rho^*_sj_!\sF).
\end{equation}
Divide both sides of \eqref{lcdprop1formula11} by $\beta$ and pass $\beta\to +\infty$, we obtain 
\begin{equation}
\sum_{i\in I}(\rlc_{(D_i)_{\eta}}((j_!\mathscr F)|_{X_{\eta}})+1)\geq \rlc_{(D_1)_s}(\rho^*_sj_!\sF)+1, 
\end{equation}
which implies \eqref{lcdprop1formula}.
\end{proof}

\begin{proposition}\label{lcdprop2}
There exists an open dense subset $V$ of $S$, such that for each point $s\in V$, we have 
\begin{equation}
\rho^*_s(\rGLC_f(j_!\sF)+D)=\rLC_{X_s}((j_!\sF)|_{X_s})+(D_{s})_{\mathrm{red}}.
\end{equation}
\end{proposition}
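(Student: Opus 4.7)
The plan is to mimic the reduction scheme of Proposition \ref{cdprop2}, then use the tame Kummer cover construction of Proposition \ref{lcdprop1} to reduce the logarithmic statement to the already-proved non-logarithmic semi-continuity (Proposition \ref{cdprop2}) on the cover. The multiplicative behaviour of log conductors under tame covers amplifies the log conductor by a factor of $\beta$ while leaving the slack between $\rc$ and $\rlc$ bounded by $1$; sending $\beta$ large and invoking the Hasse--Arf integrality of Lemma \ref{Hasse-Arf-conductor} then closes the gap.

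First I would carry out the same reductions as at the beginning of Proposition \ref{cdprop2}: the statement is Zariski-local around the generic point $\eta \in S$, and using Proposition \ref{genetosp2}, Proposition \ref{genetosp1}, Lemma \ref{bcofclc} and Lemma \ref{bcofgcglc}(i) I may assume that $S$ is connected, affine and smooth, that $D$ is irreducible, and that $D_s$ is geometrically irreducible for every $s \in S$. The target identity then reduces to producing an open dense $V \subseteq S$ such that
\begin{equation*}
\rlc_{D_{\eta}}((j_!\sF)|_{X_{\eta}}) \leq \rlc_{D_t}((j_!\sF)|_{X_t}) \quad \text{for every } t \in V,
\end{equation*}
since the $+D$ and $+(D_t)_{\mathrm{red}}$ terms contribute the same $+1$ on the unique component and cancel. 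The case $\dim_k X - \dim_k S = 1$ is \cite[Theorem 7.2]{H23}; from now on I assume $\dim_k X - \dim_k S \geq 2$.

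Fix an integer $\beta$ coprime to $p$. After shrinking $X$ to a Zariski neighborhood of $X_\eta$, I would choose a global section $g \in \Gamma(X, \sO_X)$ defining $D$ and form the tame Kummer cover
\begin{equation*}
h'_\beta \colon X'_\beta = \Spec(\sO_X[T]/(T^\beta - g)) \longrightarrow X.
\end{equation*}
Then $f'_\beta = f \circ h'_\beta : X'_\beta \to S$ is smooth, $D'_\beta = (T) \subset X'_\beta$ is an irreducible effective Cartier divisor of $X'_\beta$ relative to $S$ with geometrically irreducible fibers, and $(h'_\beta)^*j_!\sF$ is the extension by zero of a locally constant constructible sheaf on $X'_\beta \setminus D'_\beta$. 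By Proposition \ref{logext}, for every $t \in S$,
\begin{equation*}
\rlc_{D'_\beta, t}((h'_\beta)^* j_!\sF) = \beta \cdot \rlc_{D_t}(j_!\sF).
\end{equation*}
Applying Proposition \ref{cdprop2} to $f'_\beta$ and $(h'_\beta)^* j_!\sF$ produces an open dense $V_\beta \subseteq S$ on which $\rc_{D'_\beta,\eta} \leq \rc_{D'_\beta,t}$. Combined with the two-sided bound $\rlc \leq \rc \leq \rlc + 1$ recalled in \ref{introlc+1>c>lc} and the displayed identity, this yields
\begin{equation*}
\beta \cdot \rlc_{D_\eta}(j_!\sF) \leq \beta \cdot \rlc_{D_t}(j_!\sF) + 1 \quad \text{for every } t \in V_\beta.
\end{equation*}

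For the closing step I would invoke Lemma \ref{Hasse-Arf-conductor}: letting $N = \mathrm{rk}_\Lambda \sF$, every value $\rlc_{D_t}(j_!\sF)$ lies in $(1/N!)\mathbb{Z}$, so any strict inequality $\rlc_{D_\eta} > \rlc_{D_t}$ would create a gap of at least $1/N!$. Choosing once and for all a $\beta$ coprime to $p$ with $\beta > N!$ rules out such a gap on $V_\beta$ and forces $\rlc_{D_\eta} \leq \rlc_{D_t}$ for every $t \in V_\beta$; thus $V = V_\beta$ is the required open dense subset. The main obstacle will be controlling the Hasse--Arf denominator uniformly in $t$ by a single integer depending only on $\sF$, so that one fixed $\beta$ handles both the generic point and every $t \in V_\beta$ simultaneously; once this is secured, the rest is a clean transfer, via tame Kummer covers, of the logarithmic semi-continuity to the non-logarithmic Proposition \ref{cdprop2}.
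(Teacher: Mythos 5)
The skeleton of your argument---reduce to $D$ irreducible via Propositions \ref{genetosp2}, \ref{genetosp1} and Lemmas \ref{bcofclc}, \ref{bcofgcglc}(i), pass to the tame Kummer cover $X'=\Spec(\sO_X[T]/(T^\beta-g))$, transport information through Proposition \ref{logext}, and close the gap via Lemma \ref{Hasse-Arf-conductor} with $\beta$ large---is exactly the paper's strategy, and your Hasse--Arf bound $\beta>N!$ works just as well as the paper's $\beta\geq(\rk_\Lambda\sF)^2+1$. But there is one genuine gap.

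You invoke only Proposition \ref{cdprop2}, which gives you the one-sided inequality $\rc_{D'_\eta}\leq\rc_{D'_t}$ on an open dense $V_\beta$. Chaining this with $\rlc\leq\rc\leq\rlc+1$ and the tame Kummer identity, you get
\begin{equation*}
\beta\cdot\rlc_{D_\eta}\leq\rc_{D'_\eta}\leq\rc_{D'_t}\leq\rlc_{D'_t}+1=\beta\cdot\rlc_{D_t}+1,
\end{equation*}
and Hasse--Arf then yields $\rlc_{D_\eta}\leq\rlc_{D_t}$ for $t\in V_\beta$. This is only half of what Proposition \ref{lcdprop2} asserts: the statement is an \emph{equality} $\rlc_{D_\eta}=\rlc_{D_t}$, and your proposal never establishes the reverse inequality $\rlc_{D_\eta}\geq\rlc_{D_t}$ for all $t\in V$. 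Your claim that ``the target identity reduces to'' a one-sided inequality is not justified; the other direction is available from Proposition \ref{lcdprop1} only for \emph{closed} $t$, and the extension of that inequality to arbitrary $t$ (via the smooth part of $\overline{\{t\}}$, as in Step II of \ref{finalproofCandLCtheorem}) presupposes Proposition \ref{lcdprop2} itself, so you cannot use it here without circularity.

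The fix is the one the paper actually uses: instead of Proposition \ref{cdprop2}, invoke Theorem \ref{intromaintheorem}(i) (which at this point is fully proven by Propositions \ref{cdprop1}--\ref{cdprop2} together with Steps 1--4 of the final proof and involves no logarithmic input) applied to $f'\colon X'\to S$ and $h'^*j_!\sF$. This produces an open dense $V$ on which $\rc_{D'_\eta}\cdot D'_s=\rC_{X'_s}(h'^*_s\rho^*_sj_!\sF)$ holds as an \emph{equality} for every $s\in V$, not merely closed ones. From that equality you extract \emph{both} inequalities $\beta\rlc_{D_\eta}-1\leq\beta\rlc_{D_s}\leq\beta\rlc_{D_\eta}+1$, and Hasse--Arf then closes both gaps simultaneously, giving the full equality $\rlc_{D_\eta}=\rlc_{D_s}$ on $V$.
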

\begin{proof}
By Proposition \ref{genetosp2}, we can find an open dense subset $W\subseteq S$ such that, for any $s\in W$ and any different indices $i,\iota\in I$, the fibers $(D_i)_s$ and $(D_\iota)_s$ do not have same irreducible components. Hence, we may assume that $D$ is irreducible. Therefore, to prove the proposition, it is sufficient to prove the existence of an open dense subset $V$ of $S$ such that, for any point $s\in V$, we have 
\begin{equation}\label{lcdprop2formula0}
(\rlc_{D_{\eta}}(\rho^*_\eta j_!\sF))\cdot D_s=\rLC_{X_s}(\rho^*_sj_!\sF).
\end{equation}
Notice that $D_s$ may not be irreducible in general. By proposition \ref{genetosp1}, there exists a connected smooth $k$-scheme $S'$ and an \'etale map $\gamma:S'\rightarrow S$ such that $D'=D\times_SS'$ is the disjoint union of its irreducible components and that every irreducible component of $D'$ has geometrically irreducible fibers at each point of $S'$. By Lemma \ref{bcofclc}, Lemma \ref{bcofgcglc}(i) and the fact that $\gamma:S'\rightarrow S$ is an open mapping, we can reduce to the case where $S$ is a connected, affine and smooth $k$-scheme and $D_s$ is geometrically irreducible for any $s\in S$. We may further replace $X$ by an affine neighborhood of the generic point of $D$, Thus, we may assume that $X$ and $S$ are connected, affine and smooth and moreover $D$ is defined by an element $g$ of $\Gamma(X,\mathscr O_X)$ with geometrically irreducible fibers. Let $\beta$ be a positive integer co-prime to $p$ with $\beta\geq(\rk_{\Lambda}\sF)^2+1$. Let 
\begin{equation*}
X'=\spec(\sO_X[T]/(T^\beta-g))
\end{equation*}
be a tame cover of $X$ ramified along $D$ of degree $\beta$, $h':X'\to X$ the canonical projection, $f':X'\to S$ the composition of $h':X'\to X$ and $f:X\to S$ and $S'$ the smooth divisor on $X$ defined by $(T)=(X\times_SS')_{\mathrm{red}}$.
For any $s\in S$, we denote by $\rho'_s:X'_s\to X'$ the canonical injection, $h'_s:X'_s\to X_s$ the base change of $h':X'\to X$ by $\rho_s:X_s\to X$. Notice that $f':X'\to S$ and $f'|_{D'}:D'\to S$ are smooth and that $h'^*_s(D_s)=\beta\cdot D'_s$. Applying Theorem \ref{intromaintheorem}(i) to the morphism $f':X'\to S$ and the sheaf $h'^*j_!\sF$, we can find an open dense subset $V$ of $S$ such that, for any $s\in V$, we have 
\begin{equation}\label{lcdprop2formula1}
\rc_{D'_{\eta}}(\rho'^*_{\eta}h'^*j_!\sF)\cdot D'_s=\rC_{X'_{s}}(h'^*_s\rho^*_sj_!\sF).
\end{equation}
In the following, let $s$ be a point of $V$. By subsection \ref{introlc+1>c>lc} and Proposition \ref{logext}, we have 
\begin{align}
(\beta\cdot\rlc_{D_{\eta}}(\rho^*_\eta j_!\sF)+1)\cdot D'_s&=(\rlc_{D'_{\eta}}(\rho'^*_{\eta}h'^*j_!\sF)+1)\cdot D'_s\geq \rc_{D'_{\eta}}(\rho'^*_{\eta}h'^*j_!\sF)\cdot D'_s\label{lcdprop2formula2}\\
&\geq \rlc_{D'_{\eta}}(\rho'^*_{\eta}h'^*j_!\sF)\cdot D'_s=(\beta\cdot\rlc_{D_{\eta}}(\rho^*_\eta j_!\sF))\cdot D'_s,\nonumber
\end{align}
and 
\begin{align}
h'^*_s(\rLC_{X_s}(\rho^*_sj_!\sF))+D'_s&=\rLC_{X'_s}(h'^*_s\rho^*_sj_!\sF)+D'_s\geq \rC_{X'_{s}}(h'^*_s\rho^*_sj_!\sF)\label{lcdprop2formula3}\\
&\geq \rLC_{X'_s}(h'^*_s\rho^*_sj_!\sF)=h'^*_s(\rLC_{X_s}(\rho^*_sj_!\sF)).\nonumber
\end{align}
By \eqref{lcdprop2formula1}, \eqref{lcdprop2formula2} and \eqref{lcdprop2formula3},  we have 
\begin{align}
(\beta\cdot\rlc_{D_{\eta}}(\rho^*_\eta j_!\sF)+1)\cdot D'_s\geq h'^*_s(\rLC_{X_s}(\rho^*_sj_!\sF))\geq (\beta\cdot\rlc_{D_{\eta}}(\rho^*_\eta j_!\sF)-1)\cdot D'_s.
\end{align}
It is equivalent to
\begin{equation}\label{lcdprop2formula4}
\left(\rlc_{D_{\eta}}(\rho^*_\eta j_!\sF)+\frac{1}{\beta}\right)\cdot D_s\geq \rLC_{X_s}(\rho^*_sj_!\sF)\geq \left(\rlc_{D_{\eta}}(\rho^*_\eta j_!\sF)-\frac{1}{\beta}\right)\cdot D_s.
\end{equation}
Let $\rlc_1$ be a coefficient of $\rLC_{X_s}(\rho^*_sj_!\sF)$. By \eqref{lcdprop2formula4}, we have 
\begin{equation}\label{lcdprop2formula5}
|\rlc_1-\rlc_{D_{\eta}}(\rho^*_\eta j_!\sF)|\leq \frac{1}{\beta}\leq\frac{1}{(\rk_{\Lambda}\sF)^2+1}
\end{equation}
By Proposition \ref{Hasse-Arf-conductor}, we can find a positive integer $0<r\leq(\rk_{\Lambda}\sF)^2$ such that 
\begin{equation}\label{lcdprop2formula6}
\rlc_1-\rlc_{D_{\eta}}(\rho^*_\eta j_!\sF)\in \frac{1}{r}\mathbb Z.
\end{equation}
Combining \eqref{lcdprop2formula5} and \eqref{lcdprop2formula6}, we obtain that, for any $s\in V$, the equality \eqref{lcdprop2formula0} holds
which finishes the proof of the proposition.
\end{proof}

\subsection{}\label{finalproofCandLCtheorem}{\bf Proof of Theorem \ref{intromaintheorem}.} 

For a non-closed point $t$ of $S$, we denote by $T$ the smooth part of $\overline{\{t\}}$, which is an open dense subset of $\overline{\{t\}}$. We have the following commutative diagram
\begin{equation}\label{XTXsXdiagramlc}
\xymatrix{\relax
X\ar[d]_f\ar@{}|-{\Box}[rd]&X_T\ar[d]^{f_T}\ar[l]^-(0.5){\rho_T}&X_s\ar[l]^-(0.5){\iota_s}\ar@/_1pc/[ll]_{\rho_s}\\
S&T\ar[l]
}
\end{equation}
where $s\in T$ is a point and $\iota_s:X_s\rightarrow X_T$ the base change of the inclusion $s\rightarrow T$.

\vspace{0.3cm}

We firstly prove Theorem \ref{intromaintheorem}(i). It is divided into the following four steps:

Step 1. Combining Proposition \ref{cdprop1} and Proposition \ref{cdprop2}, we can find an open dense subset $V\subseteq S$ such that 
\begin{equation}\label{eqclosedpoint}
\rho_s^*(\mathrm{GC}_f(j_!\mathscr F))=\mathrm{C}_{X_s}(j_!\mathscr F|_{X_s}).
\end{equation}
for each closed point $s\in V$. 

Step 2.  Let $V$ be the open dense subset of $S$ in Step 1. For a non-closed point $t\in V$, we denote by $T$ the smooth part of $\overline{\{t\}}$. We take the notation of \eqref{XTXsXdiagramlc}. Note that $T\cap V$ is an open dense subset of $T$. By Step 1, for every closed point $s\in T\cap V$, we have
\begin{equation}\label{Cstep2formula1}
\iota^*_s(\rho^*_T(\mathrm{GC}_f(j_!\mathscr F)))=\rho_s^*(\mathrm{GC}_f(j_!\mathscr F))=\mathrm{C}_{X_s}((j_!\mathscr F)|_{X_s}).
\end{equation}
Applying Step 1 to $ f_T:X_T\rightarrow T$ and $(j_!\mathscr F)|_{X_T}$, we can find an open dense subset $W\subset T\cap V$ such that, for every closed point $s\in W$, we have
\begin{equation}\label{Cstep2formula2}
\iota^*_s(\mathrm{GC}_{f_T}((j_!\mathscr F)|_{X_T}))=\mathrm{C}_{X_s}((j_!\mathscr F)|_{X_s}).
\end{equation}
By \eqref{Cstep2formula1} and \eqref{Cstep2formula2}), we have 
\begin{equation*}
\iota^*_s(\rho^*_T(\mathrm{GC}_f(j_!\mathscr F)))=\iota^*_s(\mathrm{GC}_{f_T}((j_!\mathscr F)|_{X_T})),
\end{equation*}
for any closed point $s\in W$. Since both $W$ and $T$ are $k$-schemes and $W$ is dense in $T$, we obtain that $\rho^*_T(\mathrm{GC}_f(j_!\mathscr F))=\mathrm{GC}_{f_T}(j_!\mathscr F|_{X_T})$. Applying $\iota^*_t$ to both sides of the equation, we get
\begin{equation*}
\rho_t^*(\mathrm{GC}_f(j_!\mathscr F))=\mathrm{C}_{X_t}((j_!\mathscr F)|_{X_t}).
\end{equation*}
Combining with Step 1, we prove that
\begin{equation*}
\rho_t^*(\mathrm{GC}_f(j_!\mathscr F))=\mathrm{C}_{X_t}((j_!\mathscr F)|_{X_t}),
\end{equation*}
 for any $t\in V$.

Step 3. Let $V$ be the open dense subset of $S$ in Step 1. Let $t\in S-V$ be a point. If $t$ is closed, we have (Proposition \ref{cdprop1})
\begin{equation}\label{Cstep3formula1}
\rho_t^*(\mathrm{GC}_f(j_!\mathscr F))\geq \mathrm{C}_{X_t}(j_!\mathscr F|_{X_t}).
\end{equation}
When $t$ is not closed, we denote by $T$ the smooth part of $\overline{\{t\}}$ and we take the notation of  \eqref{XTXsXdiagramlc}. Applying Step 1 to $f_T:X_T\rightarrow T$ and $(j_!\mathscr F)|_{X_T}$, there exists an open dense subset $W\subseteq T$ such that, for any closed point $s\in W$, we have
\begin{equation}\label{Cstep3formula2}
\iota^*_s(\mathrm{GC}_{f_T}((j_!\mathscr F)|_{X_T}))=\mathrm{C}_{X_s}((j_!\mathscr F)|_{X_s}).
\end{equation}
By Proposition \ref{cdprop1}, for any closed point $s\in W$, we have
\begin{equation}\label{Cstep3formula3}
\iota^*_s(\rho^*_T(\mathrm{GC}_f(j_!\mathscr F)))=\rho^*_s(\mathrm{GC}_f(j_!\mathscr F))\geq\mathrm{C}_{X_s}((j_!\mathscr F)|_{X_s}).
\end{equation}
  By \eqref{Cstep3formula2}, \eqref{Cstep3formula3}, we get
\begin{equation*}
\iota^*_s(\rho^*_T(\mathrm{GC}_f(j_!\mathscr F)))\geq \iota^*_s(\mathrm{GC}_{f_T}(j_!\mathscr F|_{X_T})),
\end{equation*}
for any closed point $s\in W$. Hence, we have 
\begin{equation}\label{Cstep3formula4}
\rho^*_T(\mathrm{GC}_f(j_!\mathscr F))\geq \mathrm{GC}_{f_T}(j_!\mathscr F|_{X_T})
\end{equation}
Applying $\iota^*_t$ to both sides of \eqref{Cstep3formula3}, we obtain
\begin{equation*}
\rho_t^*(\mathrm{GC}_f(j_!\mathscr F))\geq\mathrm{C}_{X_t}(j_!\mathscr F|_{X_t}).
\end{equation*}
In summary, for any $t\in S-V$, we have 
\begin{equation*}
\rho_t^*(\mathrm{GC}_f(j_!\mathscr F))\geq\mathrm{C}_{X_t}(j_!\mathscr F|_{X_t}).
\end{equation*}

Step 4. We obtain Theorem \ref{intromaintheorem}(i) by the combination of Step 2 and Step 3.

\vspace{0.3cm}

Secondly, we prove Theorem \ref{intromaintheorem}(ii). It consists of the following three steps:

Step I. By Proposition \ref{lcdprop2}, we can find an open dense subset $W\subseteq S$ such that 
\begin{equation}\label{eqclosedpointlc}
\rho_s^*(\mathrm{GLC}_f(j_!\mathscr F))=\mathrm{LC}_{X_s}(j_!\mathscr F|_{X_s}).
\end{equation}
for each point $s\in W$. 

Step II. Let $W$ be the open dense subset of $S$ in Step I. Let $t\in S-W$ be a point. If $t$ is closed, we have (Proposition \ref{lcdprop1})
\begin{equation}\label{LCstep2formula1}
\rho_{t}^*(\mathrm{GLC}_{f}(j_!\mathscr F)+D)\geq\mathrm{LC}_{X_{t}}((j_!\mathscr F)|_{X_{t}})+(D_{t})_{\mathrm{red}}.
\end{equation}
When $t$ is not closed, we denote by $T$ the smooth part of $\overline{\{t\}}$ and we take the notation of  \eqref{XTXsXdiagramlc}. We put $D_T=D\times_ST$.
Applying Step 1 to $f_T:X_T\rightarrow T$ and $(j_!\mathscr F)|_{X_T}$, there exists an open dense subset $V\subseteq T$ such that, for any closed point $s\in V$, we have
\begin{equation}\label{LCstep2formula2}
\iota^*_s(\mathrm{GLC}_{f_T}((j_!\mathscr F)|_{X_T})+(D_T)_{\mathrm{red}})=\mathrm{LC}_{X_s}((j_!\mathscr F)|_{X_s})+(D_s)_{\mathrm{red}}.
\end{equation}
By Proposition \ref{lcdprop1}, for any closed point $s\in V$, we have
\begin{equation}\label{LCstep2formula3}
\iota^*_s(\rho^*_T(\mathrm{GLC}_f(j_!\mathscr F)+D))=\rho^*_s(\mathrm{GLC}_f(j_!\mathscr F)+D)\geq\mathrm{LC}_{X_s}((j_!\mathscr F)|_{X_s})+(D_s)_{\mathrm{red}}.
\end{equation}
  By \eqref{LCstep2formula2}, \eqref{LCstep2formula3}, we get
\begin{equation*}
\iota^*_s(\rho^*_T(\mathrm{GLC}_f(j_!\mathscr F)+D))\geq \iota^*_s(\mathrm{GC}_{f_T}(j_!\mathscr F|_{X_T})+(D_T)_{\mathrm{red}}),
\end{equation*}
for any closed point $s\in V$. Hence, we have 
\begin{equation}\label{LCstep2formula4}
\rho^*_T(\mathrm{GLC}_f(j_!\mathscr F)+D)\geq \mathrm{GLC}_{f_T}(j_!\mathscr F|_{X_T})+(D_T)_{\mathrm{red}}
\end{equation}
Applying $\iota^*_t$ to both sides of \eqref{LCstep2formula4}, we obtain
\begin{align*}
\rho_t^*(\mathrm{GLC}_f(j_!\mathscr F)+D)&=\iota^*_t(\rho^*_T(\mathrm{GLC}_f(j_!\mathscr F)+D))\\
&\geq \iota^*_t(\mathrm{GLC}_{f_T}(j_!\mathscr F|_{X_T})+(D_T)_{\mathrm{red}})\\
&=\mathrm{LC}_{X_t}(j_!\mathscr F|_{X_t})+\iota^*_t((D_T)_{\mathrm{red}})=\mathrm{LC}_{X_t}(j_!\mathscr F|_{X_t})+(D_t)_{\mathrm{red}}.
\end{align*}
In summary, for any $t\in S-W$, we have \begin{equation*}
\rho_t^*(\mathrm{GLC}_f(j_!\mathscr F)+D)\geq\mathrm{LC}_{X_t}(j_!\mathscr F|_{X_t})+(D_t)_{\mathrm{red}}.
\end{equation*}

Step III. We obtain Theorem \ref{intromaintheorem}(ii) by the combination of Step I and Step II.
\hfill$\Box$ 

\begin{corollary}\label{coro_for_Betti_bound}
Assume that $S$ is integral and that the fiber $D_t$ is geometrically integral for each $t\in S$. Then, for any  point $t\in S$, we have 
\begin{equation*}
\rc_D(j_!\mathscr F)\geq\rc_{D_{\eta}}((j_!\mathscr F)|_{X_{\eta}})\geq \rc_{D_{t}}((j_!\mathscr F)|_{X_{ t}})\ \ \ \text{and}\ \ \ \rlc_D(j_!\mathscr F)\geq \rlc_{D_{\eta}}((j_!\mathscr F)|_{X_{\eta}})\geq\rlc_{D_{t}}((j_!\mathscr F)|_{X_{t}}).
\end{equation*}
\end{corollary}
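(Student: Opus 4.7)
The hypothesis that every fibre $D_t$ is geometrically integral forces the index set $I$ of subsection \ref{nothigherdim} to be a singleton. Indeed, each $D_i\to S$ is flat (as $D_i$ is a relative Cartier divisor) with non-empty image open in $S$, so $\eta$ lies in the image of every $D_i$ and each $D_{i,\eta}$ is a non-empty smooth Cartier divisor on $X_\eta$; a sum of several such contributions cannot produce an irreducible reduced $D_\eta$. Therefore $D$ is irreducible, $f|_D\colon D\to S$ is smooth and surjective, $(D_t)_{\mathrm{red}}=D_t$ for every $t\in S$, and
\begin{equation*}
\rGC_f(j_!\sF)=\rc_{D_\eta}(j_!\sF|_{X_\eta})\cdot D,\qquad \rGLC_f(j_!\sF)=\rlc_{D_\eta}(j_!\sF|_{X_\eta})\cdot D,
\end{equation*}
while $\rC_X,\rLC_X,\rC_{X_t},\rLC_{X_t}$ are each a single rational multiple of the corresponding irreducible divisor.

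For the middle inequalities I would apply \cref{intromaintheorem}(i) and (ii) directly. Part (i) implies in particular $\rho_t^*(\rGC_f(j_!\sF))\geq \rC_{X_t}(j_!\sF|_{X_t})$ for every $t\in S$, which via the explicit form of $\rGC_f$ above becomes $\rc_{D_\eta}(j_!\sF|_{X_\eta})\cdot D_t\geq \rc_{D_t}(j_!\sF|_{X_t})\cdot D_t$, whence $\rc_{D_\eta}\geq \rc_{D_t}$. Similarly, part (ii) gives $\rho_t^*(\rGLC_f(j_!\sF)+D)\geq \rLC_{X_t}(j_!\sF|_{X_t})+(D_t)_{\mathrm{red}}$, which using $(D_t)_{\mathrm{red}}=D_t$ reads $(\rlc_{D_\eta}+1)\cdot D_t\geq (\rlc_{D_t}+1)\cdot D_t$ and yields $\rlc_{D_\eta}\geq \rlc_{D_t}$ after comparing coefficients.

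For the left inequalities I would apply \cref{Ccutbycurve}(1) (resp.\ \cref{LCcutbycurve}(1)) to the canonical morphism $\rho_\eta\colon X_\eta\to X$, noting that $\rho_\eta^*D=D_\eta$ is a Cartier divisor. These yield $\rho_\eta^*(\rC_X(j_!\sF))\geq \rC_{X_\eta}(j_!\sF|_{X_\eta})$ and $\rho_\eta^*(\rLC_X(j_!\sF))\geq \rLC_{X_\eta}(j_!\sF|_{X_\eta})$, and substituting the explicit expressions and cancelling $D_\eta$ gives $\rc_D\geq \rc_{D_\eta}$ and $\rlc_D\geq \rlc_{D_\eta}$.

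\emph{Expected obstacle.} The delicate point is invoking \cref{Ccutbycurve}(1) and \cref{LCcutbycurve}(1) for $\rho_\eta$, since $X_\eta$ is smooth over $k(\eta)$ rather than over $k$. One remedies this by a spreading-out argument: both sides of the required inequalities are \'etale-local at the generic point of $D$ (resp.\ $D_\eta$), and the identification of local rings $\sO_{X_\eta,\eta_{D_\eta}}\simeq\sO_{X,\eta_D}$ shows that the relevant ramification data is the same for $X_\eta\to X$ and for the open immersion $X_U\hookrightarrow X$ where $U\subset S$ ranges over sufficiently small affine open neighbourhoods of $\eta$; to each such open immersion between smooth $k$-schemes, \cref{Ccutbycurve}(1) and \cref{LCcutbycurve}(1) apply verbatim.
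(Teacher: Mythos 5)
Your treatment of the middle inequalities is correct and matches the paper's: $D$ is forced to be irreducible, and \cref{intromaintheorem}(i)--(ii) give the scalar inequalities $\rc_{D_\eta}\geq\rc_{D_t}$ and $\rlc_{D_\eta}\geq\rlc_{D_t}$ upon comparing coefficients. The left inequalities, however, contain a genuine gap. You want to apply \cref{Ccutbycurve}(1) and \cref{LCcutbycurve}(1) to $\rho_\eta\colon X_\eta\to X$, but these theorems require a morphism of smooth $k$-schemes, and once $\dim S>0$ the fibre $X_\eta$ is not of finite type over $k$ --- it is a $k(\eta)$-scheme, and $k(\eta)$ has positive transcendence degree over $k$. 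Your proposed remedy does not close this gap: the identification $\sO_{X_\eta,\eta_{D_\eta}}\simeq\sO_{X,\eta_D}$ of local rings is correct, but $\rc_D$ is defined after base change to $\bar k$ whereas $\rc_{D_\eta}$ is defined after base change to $\overline{k(\eta)}$, so the two complete discrete valuation fields being compared have residue fields differing by an infinite algebraic extension, and invariance of the conductor under this change is exactly what would need to be proved. Replacing $\rho_\eta$ by an open immersion $X_U\hookrightarrow X$ for a small affine $U\ni\eta$ does not help either: applying \cref{Ccutbycurve}(1) to it yields only the tautology $\rc_D\geq\rc_{D\cap X_U}=\rc_D$, which says nothing about $\rc_{D_\eta}$.

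The paper's proof takes a different and correct route: it picks a closed point $v$ in the smooth locus $V_0$ of the open dense set $V$ on which \cref{intromaintheorem} gives equality. Since $k$ is perfect, $X_v$ is a smooth $k$-scheme of finite type, so \cref{Ccutbycurve}(1) and \cref{LCcutbycurve}(1) apply legitimately to the closed immersion $\rho_v\colon X_v\to X$, giving $\rc_D\geq\rc_{D_v}$ and $\rlc_D\geq\rlc_{D_v}$; then the equalities $\rc_{D_v}=\rc_{D_\eta}$ and $\rlc_{D_v}=\rlc_{D_\eta}$ (valid because $v\in V$) deliver the left inequalities. You should route the argument through such a closed fibre rather than attempting to apply the restriction theorems to the generic fibre directly.
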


In the corollary, the invariants $\rc_D(j_!\mathscr F)$ and $\rlc_D(j_!\mathscr F)$ are well defined since $X$ and $D$ are smooth in a Zariski neighborhood of the generic point of $D$.

\begin{proof}
By Theorem \ref{intromaintheorem}, there exists an open dense subset $V$ of $S$ such that, for any point $s\in V$, we have 
\begin{equation}\label{Ctheoremcoro1}
\rc_{D_{\eta}}((j_!\mathscr F)|_{X_{\eta}})=\rc_{D_{s}}((j_!\mathscr F)|_{X_{s}})\ \ \ \text{and}\ \ \ \rlc_{D_{\eta}}((j_!\mathscr F)|_{X_{\eta}})=\rlc_{D_{s}}((j_!\mathscr F)|_{X_{s}}),
\end{equation}
 and that, for any point $t\in S$, we have
\begin{equation}\label{Ctheoremcoro2}
\rc_{D_{\eta}}((j_!\mathscr F)|_{X_{\eta}})\geq \rc_{D_{t}}((j_!\mathscr F)|_{X_{t}})\ \ \ \textrm{and}\ \ \ \rlc_{D_{\eta}}((j_!\mathscr F)|_{X_{\eta}})\geq \rlc_{D_{t}}((j_!\mathscr F)|_{X_{t}}).
\end{equation}
 Let $V_0$ be the smooth locus of $V$, which is open dense in $V$. 
 Applying Theorem \ref{Ccutbycurve} and Theorem \ref{LCcutbycurve} to the closed immersion $\rho_v:X_v\to X$ and the sheaf $j_!\mathscr F$ for a closed point $v\in V_0$, we have 
 \begin{equation} \label{Ctheoremcoro3}
 \rc_D(j_!\mathscr F)\geq \rc_{D_{v}}((j_!\mathscr F)|_{X_{v}})\ \ \ \textrm{and} \ \ \ \rlc_D(j_!\mathscr F)\geq \rlc_{D_{v}}((j_!\mathscr F)|_{X_{v}}).
 \end{equation}
Combining \eqref{Ctheoremcoro1}, \eqref{Ctheoremcoro2} and \eqref{Ctheoremcoro3}, we obtain the corollary.
 \end{proof}


\begin{thebibliography}{20}
\bibitem{as1}
A. Abbes and T. Saito, \emph{Ramification of local fields with imperfect residue fields}. Amer. J. Math. {\bf 124}, (2002), 879--920.

\bibitem{as2}
A. Abbes and T. Saito, \emph{Ramification of local fields with imperfect residue fields II}. Doc. Math. Extra Volume Kato,
(2003), 5--72.

\bibitem{An}
Y. Andr\'e, \emph{Structure des connexions m\'eromorphes formelles de plusieurs variables et semi-continuit\'e de
l'irr\'egularit\'e}. Invent. Math. {\bf 170} (2007), no. 1, 147--198.

\bibitem{bei}
A. Beilinson, \emph{Constructible sheaves are holonomic}. Selecta Math. (N.S.), {\bf 22}, No. 4, (2016), 1797--1819.

\bibitem{ful}
W. Fulton, \emph{Intersection theory}. Springer-Verlag, Berlin, second edition, 1998.

\bibitem{EGA4}
A. Grothendieck and J. Dieudonn\'e, \emph{\'El\'ements de g\'eom\'etrie alg\'ebrique: IV \'Etude locale des sch\'emas et des morphismes de sch\'emas}. Publ. Math. Inst. Hautes Sci. {\bf 20}, (1964), 5--259; {\bf 24}, (1965), 5--231; {\bf 28}, (1966), 5--255; {\bf 32}, (1967), 5--361.

 
\bibitem{HT21}
H. Hu and J.-B. Teyssier, \emph{Characteristic cycle and wild ramification for nearby cycles of \'etale sheaves}. J. reine angew. Math. {\bf 776} (2021), 27--62.

\bibitem{HT24}
H. Hu and J.-B. Teyssier, \emph{Estimate for Betti numbers and relative Hermite-Minkowski theorem for perverse sheaves}. Pre-print, (2025). 


\bibitem{HY17} 
H. Hu and E. Yang, \emph{Semi-continuity for total dimension divisors of \'etale sheaves}. Int. J. Math. Vol. {\bf 28}, No. 1 (2017), 21 pages.

\bibitem{H19}
H. Hu, \emph{Logarithmic ramifications of \'etale sheaves by restricting to curves}. Int. Math. Res. Not., Volume {\bf 2019}, Issue 19, (2019), 5914--5952.

\bibitem{H23}
H. Hu, \emph{Semi-continuity of conductors, and ramification bound of nearby cycles}. J. reine angew. Math. {\bf 804} (2023), 41--103.

\bibitem{dej}
A.J. de Jong, \emph{Smoothness, semi-stability and alterations}. Publ. Math. Inst. Hautes \'Etudes Sci. {\bf 83}, (1996), 51--93.

\bibitem{katz}
N. Katz, \emph{Gauss sum, Kloostetman sum, and monodromy groups}. Annals of Mathematics studies {\bf 116},
Princeton University Press, Princeton, NJ, 1988.

\bibitem{lau}
G. Laumon, \emph{Semi-continuit\'e du conducteur de Swan (d'apr\`es Deligne)}. S\'eminaire E.N.S. (1978-1979) Expos\'e {\bf 9}, Ast\'erisque 82-83 (1981) 173-219.


\bibitem{wr}
T. Saito, \emph{Wild ramification and the contangent bundle}. J. Algebraic Geom., {\bf 26} (2017), 399--473.

\bibitem{cc}
T. Saito,
\emph{The characteristic cycle and the singular support of a constructible sheaf}.
Invent. math., {\bf 207(2)}, (2017), 597--695.

\bibitem{linrep}
J.P. Serre, \emph{Linear representations of finite groups}. Graduate Texts in Mathematics 42. Springer-Verlag, New York-Heidelberg, 1997.

\bibitem{xiao}
L. Xiao, \emph{On ramification filtrations and p-adic differential equations, I: equal characterisitc case}. Algebra and Number Theory, {\bf 4}, (2010), 969--1027.


\end{thebibliography}
\end{document}